\newenvironment{keywords}{
\list{}{\advance\topsep by0.35cm\relax\small
\leftmargin=1cm
\itemindent\listparindent
\rightmargin\leftmargin}\item[\hskip\labelsep
\bfseries Keywords:]}
{\endlist}
\newtheorem{theorem}{Theorem}[section]
\newtheorem{lemma}[theorem]{Lemma}
\newtheorem{proof}[theorem]{Proof}
\newtheorem{proposition}[theorem]{Proposition}
\newtheorem{definition}[theorem]{Definition}
\newtheorem{remark}[theorem]{Remark}
\begin{document}

\title{Long time dynamics of a three-species food chain model with Allee effect in the top predator}

\maketitle

% Enter the first author's name and address:
\centerline{\scshape Rana D. Parshad, Emmanuel Quansah and Kelly Black}
\medskip
{\footnotesize
% please put the address of the first author
 \centerline{ Department of Mathematics,}
 \centerline{Clarkson University,}
   \centerline{ Potsdam, New York 13699, USA.}
   } % Do not forget to end the {\footnotesize by the sign }

   \medskip
\centerline{\scshape Ranjit K. Upadhyay and S.K.Tiwari}
{\footnotesize
% please put the address of the first author
 \centerline{Department of Applied Mathematics,}
 \centerline{Indian School of Mines,}
   \centerline{ Dhanbad 826004, Jharkhand, India.}
    %\centerline{ranjit.upadhyay@yahoo.com}
}

\medskip
\centerline{\scshape Nitu Kumari}
{\footnotesize
%% please put the address of the first author
 \centerline{School of Basic Sciences,}
 \centerline{Indian Institute of Technology Mandi,}
 \centerline{ Mandi, Himachal Pradesh 175 001, India.}
 %\centerline{nituism27@gmail.com}
}

\begin{abstract}
 The Allee effect is an important phenomenon in population biology characterized by positive density dependence, that is a positive correlation between population density and individual fitness. However, the effect is not well studied in multi-level trophic food chains. We consider a ratio dependent spatially explicit three species food chain model, where the top predator is subjected to a strong Allee effect. 
 We show the existence of a global attractor for the the model, that is upper semicontinuous in the Allee threshold parameter $m$. To the best of our knowledge this is the first robustness result, for a spatially explicit three species food chain model with an Allee effect.
Next, we numerically investigate the decay rate to a target attractor, that is when $m=0$, in terms of $m$.
We find decay estimates that are $\mathcal{O}(m^{\gamma})$, where $\gamma$ is found explicitly.
Furthermore, we prove various overexploitation theorems for the food chain model, showing that overexploitation has to be driven by the middle predator. In particular overexploitation is not possible without an Allee effect in place. We also uncover a rich class of Turing patterns in the model which depend significantly on the Allee threshold parameter $m$. Our results have potential applications to trophic cascade control, conservation efforts in food chains, as well as Allee mediated biological control.
\end{abstract}

\begin{keywords}
three species reaction diffusion food chain model, global existence, global attractor, upper semi-continuity, Allee effect, Turing instability .
\end{keywords}

\section{\textbf{Introduction}}

Interactions of predator and prey species are ubiquitous in spatial ecology. Therein a predator or a ``hunting" organism, hunts down and attempts to kill a prey, in order to feed. 
Food webs, which comprise all of the predator-prey interactions in a given ecosystem, are inherently more complex. The food chains or linear links of these food webs in real ecosystems, have multiple levels of predator-prey interaction, across various trophic levels. 
To better understand natural ecosystems, with multiple levels of trophic interaction, where predators and prey alike, disperse in space in search of food, mates and refuge, a natural starting point is to deviate from the classical predator-prey two species models, and investigate spatially explicit three species food chains.
Such models are appropriate to model populations of generalist predators, predating on a specialist predator, which in turn predates on a prey species. Also, they can model two specialist predators competing for a single prey \cite{M93, UR97}. The spatial component of ecological interaction has been identified as an important factor in how ecological communities are shaped, and thus the effects of space and spatially dispersing populations, via partial differential equations (PDE)/spatially explicit models of three and more interacting species, have been very well studied \cite{Gilligan1998, M93, okubo2001diffusion, sen2012bifurcation, PK14}. In most of these models, the prey is regulated or ``inhibited" from growing to carrying capacity due to predation by the predator. Whereas loss in the predator is due to death or intraspecific competition terms. 
However, there are various other natural self regulating mechanisms in a population of predators (or prey).
For instance, much research in two species models, has focused on one such mechanism: the so called Allee effect. However, less attention has been paid to this mechanism in the three species case. 
This effect, named after the ecologist Walter Clyde Allee, can occur whenever fitness of an individual in a small or sparse population, decreases as the population size or density does \cite{drake2011allee}. Since the pioneering work of Allee \cite{allee1931co,allee1949principles}, Allee dynamics has been regarded as one of the central issues in the population and community ecology. 

The effect can be best understood by the following equation for a single species $u$,

\begin{equation} 
\label{eq:af1}
\frac{d u}{d t} = u(u-m)(1-u/K),
\end{equation}

essentially a modification to the logistic equation. Here $u(t)$ the state variable, represents the numbers of a certain species at a given time $t$, $K$ is the carrying capacity of the environment that $u$ resides in, and $m$ is the Allee threshold, with $m < K$. A strong Allee effect occurs if $m > 0$, \cite{van2007heteroclinic}. This essentially means that if $u$ falls below the threshold population $m$, its growth rate is negative, and the species will go extinct \cite{pal2014bifurcation}. If $m < 0$, then there is a weak Allee effect, and we have a compensatory growth function \cite{liermann2001depensation, bioeconomics1990optimal, clark2006worldwide}. For our purposes we assume a strong Allee effect is in place, that is $m > 0$.
Note, $u^{*}=0,K$ are stable fixed points for \eqref{eq:af1}, and $u^{*}=m$ is unstable. Thus dynamically speaking the global attractor, which is the repository of all the long time dynamics of for \eqref{eq:af1} is $[0,K]$. An interesting question can now be posed: what happens to this attractor as $m \rightarrow 0$? Ecologically speaking, this is asking: what happens to the species $u$ as the Allee threshold $m$ is decreased? When $m=0$ the only stable fixed point is $u^{*}=K$, with $u^{*}=0$ being half stable. Thus the global attractor is now reduced to a single point $K$. What we observe is that the difference between having a slight Allee effect (say $0 < m <<1$) and having no Allee effect ($m=0$), can \emph{change completely} what the global attractor of \eqref{eq:af1} is. That is to say, the global attractor changes from an entire set $[0,K]$, to a single fixed point $K$.
From an ecological point of view, this says as long as there is a slight Allee effect there is an extinction risk for $u$, but without one \emph{there is none}, and $u$ will always grow to carrying capacity.
Proving that an attractor $\mathcal{A}_{m}$ approaches a target attractor $\mathcal{A}_{0}$, in a continuous sense, in the case of spatially explicit/PDE models requires a fair amount of work. This involves making several estimates of functional norms, independent of the parameter $m$ \cite{SY02}. If this can be proven however, the attractor $\mathcal{A}_{m}$ is said to be \emph{robust} at $m=0$. However, to the best of our knowledge there are no robustness results in three species food chain models, where the parameter of interest is the Allee threshold $m$. Note, unless a systems dynamics are robust, there is no possibility to capture the same in a laboratory experiment or natural setting.

The critical importance of the Allee effect has widely been realized in the conservation biology: That is: it is most likely to increase the extinction risk of low-density populations. This is also known as critical depensation in fisheries sciences. Essentially individual species population must surpass the threshold $m$ to grow. Hence a small introduced population under strong Allee effect can only succeed if it is faced with favorable ecological conditions, or experiences rapid adaptive evolution, or simply has good luck, so that it can surpass this critical threshold. 
There are several factors that might cause an Allee effect. These include difficulty of finding mates at low population densities, inbreeding depression and environmental conditioning\cite{tobin2011exploiting}. 
The Allee effect can be regarded not only as a suite of problems associated with rarity, but also as the basis of animal sociality \cite{stephens1999consequences}.
Petrovskii et al. \cite{SPM2002} found that a deterministic system with Allee effect, can induce patch invasion. Morozov et al. \cite{AMP2004} found that the temporal population oscillations can exhibit chaotic dynamics even when the distribution of the species in the space was regular. Also, Sharma and Samanta \cite{sharma2014ratio} developed a ratio-dependent predator-prey model with disease in prey, as well as an Allee effect in prey. Furthermore, Invasion biologists have recently attempted to consider Allee effects as a benefit in limiting establishment of an invading species\cite{tobin2011exploiting}, which without the effect would possibly run amuck, an cause excessive damages to native species because of predation and competition.

On the same lines, a phenomenon that occurs in large food chains, is the excessive harvesting/predation of certain species in the chain, via the predators in the trophic level above them. In many aquatic food chains, this can lead to trophic cascades  \cite{C85}, and is among the major activities that is threatening global biodiversity. Formally, overexploitation refers to the phenomenon where excessive harvesting of a species can result in its extinction. Mathematically, for a two species predator-prey system, one can prove an overexploitation type theorem, if one shows for a large enough initial density of the predator, the prey will be predated on till extinction. In this case, If the predator does not have an alternate food source, it will also subsequently go extinct. Although there is a fair amount of literature on this in two species models \cite{shi11}, it is less studied in three species models. In particular, the Allee effect itself on overexploitation, in multi-trophic level food chains, has not been sufficiently explored.

Our primary goal in the current manuscript is to propose and analyze a reaction-diffusion three species food chain model, with an Allee effect in the top predator. In particular, we aim to investigate the link between the Allee threshold parameter $m$, and the longtime dynamics of the three species food chain, in terms of 

\begin{itemize}
\item Robustness of global attractors in terms of the Allee threshold parameter $m$.
\item Overexploitation phenomenon in the food chain model, as it is effected by the Allee threshold parameter $m$ .
\item Pattern formation and the effect on patterns that form in the food chain model due to the Allee threshold parameter $m$.
\end{itemize}

Our hope is that this model could be used as a feasible toy model, to better understand the following in multi-trophic level food chains, 

\begin{itemize}
\item Top-down pressure in multi-trophic level food chains leading to overexploitation phenomenon.

\item Trophic cascades and cascade control via an Allee effect.

 \item Conservation efforts in food webs mediated via an Allee effect.

\item Biological control of an invasive top predator in a food chain via Allee effect
\end{itemize}

 Thus, we consider a situation where a prey species $u$ serves as the only food for a specialist predator $v$ which is itself predated by a generalist top predator $r$. \\
This is a typical situation often seen in nature in various food chains \cite{P10}. The governing equations for populations $u$ and $v$ follow ratio dependent functional responses, and are modeled by the Volterra scheme i.e., the middle predator population dies out exponentially in the absence of its prey. There is recently much debate about functional responses used in ecology, and the ratio dependent response is considered to be more realistic than its Holling type counterparts \cite{A00}. The above situation is described via the following system of PDE. This is already nondimensionalised, see appendix \ref{app1} for the details of the nondimensionalisation. 

\begin{align}
&\frac{\partial u}{\partial t}= d_1\Delta u + u-u^{2}-w_{1}\frac{uv}{u+v}, \label{eq:x1}\\
&\frac{\partial v}{ \partial t}= d_2 \Delta v -a_{2}v+w_{2}\frac{uv}{u+v}-w_{3}\left(\frac{vr}{v+r}\right), \label{eq:x2}\\
&\frac{\partial r}{ \partial t} = d_3 \Delta r + r\big({{r-m}}\big)\bigg(c-\frac{w_4r} {v+D_3}\bigg).\label{eq:x3},
\end{align}

 the spatial domain for the above is $\Omega \subset \mathbb{R}^{n}$, $n=1,2,3$. $\Omega$ is assumed bounded, and we prescribe Neuman boundary conditions $\nabla u \cdot \textbf{n} = \nabla v \cdot \textbf{n} = \nabla r \cdot \textbf{n} = 0$. In the above $a_2$ is the intrinsic death rate of the specialist predator $v$ in the absence of its only food $u$, $c$ measures the rate of self-reproduction of the generalist predator $r$. $w_i's$ are the maximum values which per capita growth rate can attain. $D_{3}$ shows that the top predator $r$ is a generalist and can switch its food source in the absence of $v$. Also we model the top predator via a Leslie Gower scheme \cite{UR97}, and assume that it is subject to the Allee effect. The parameter $m$ is the Allee threshold or minimum of viable population level. In this work, we limit ourselves to the case where $m > 0$, that is we assume a strong Allee effect. 

In particular we ask: How does the Allee threshold $m$ affect the various dynamical aspects of model system \eqref{eq:x1}-\eqref{eq:x3}? To this end we list our primary findings.
\begin{itemize}
\item We show that there is a $(L^{2}(\Omega),H^{1}(\Omega))$ global attractor for \eqref{eq:x1}-\eqref{eq:x3}, via theorem \ref{thm:ga2}.

 \item  We show that this attractor is upper semi-continuous w.r.t the Allee threshold $m$, via theorem \ref{thm:gaus1sc}. That is the attractor $\mathcal{A}_{m}$ is robust at $m=0$. This result requires making estimates of various functional norms, independent of the parameter $m$, and then proving and using a chain of theorems that facilitate passing to the limit as $m \rightarrow 0$. The calculations are detailed, so we confine them to an appendix section \ref{app}.
 
 \item  The decay rate of the global attractor $\mathcal{A}_{m}, m>0$, for the system \eqref{eq:x1}-\eqref{eq:x3}, to a target attractor $\mathcal{A}_{0}$ (that is when $m=0$) is estimated in terms of $m$ numerically. We find a decay rate of the order $\mathcal{O}(m^{\gamma})$, where $\gamma$ is close to 1.

\item  We investigate overexploitation phenomenon in the model system \eqref{eq:x1}-\eqref{eq:x3}, showing that the Allee threshold $m$ can effect overexploitation in the system via theorems \ref{thm:ox}, \ref{thm:ox1} and lemma \ref{lem:ox2}. In particular overexploitation is \emph{not possible} without an Allee effect in place.

\item  Turing instability exists in the model system \eqref{eq:x1}-\eqref{eq:x3}, via theorem \ref{thm:tur1}. Furthermore, the Allee threshold $m$ has a significant impact on the type of Turing patterns that form, see figures  \ref{fig:turing1}, \ref{fig:turing2}, \ref{fig:turing3}, \ref{fig:turing4}, \ref{fig:turing5}, \ref{fig:Disp1}.

\end{itemize}

\section{Preliminary Estimates}

\subsection{Preliminaries}
We now present various notations and definitions that will be used frequently. 
The usual norms in the spaces $\mathbb{L}^{p}(\Omega )$, $\mathbb{L}^{\infty
}(\Omega )$ and $\mathbb{C}\left( \overline{\Omega }\right) $ are
respectively denoted by

\begin{equation*}
\left\Vert u\right\Vert _{p}^{p}\text{=}\frac{1}{\left\vert \Omega
\right\vert }\int\limits_{\Omega }\left\vert u(x)\right\vert ^{p}dx,
\end{equation*}%
\begin{equation*}
\left\Vert u\right\Vert _{\infty }\text{=}\underset{x\in \Omega }{max}%
\left\vert u(x)\right\vert .
\end{equation*}

 We define the following phase spaces
\begin{equation*}
    H= L^{2}(\Omega)\times L^{2}(\Omega) \times L^{2}(\Omega) ,
\end{equation*}

\begin{equation*}
    E= H^{1}(\Omega)\times H^{1}(\Omega) \times H^{1}(\Omega),
\end{equation*}

\begin{equation*}
    X= H^{2}(\Omega)  \times H^{2}(\Omega) \times H^{2}(\Omega) .
\end{equation*}

We now recall the following lemma.
\begin{lemma}[Uniform Gronwall Lemma]
\label{lem:gronwall}
Let $\beta, \zeta$, and $h$ be nonnegative functions in $L^{1}_{loc}[0,\infty;\mathbb{R})$. Assume that $\beta$
is absolutely continuous on $(0,\infty)$ and the following differential inequality is satisfied:
\begin{equation}
\frac{d \beta}{dt} \leq \zeta \beta + h, \ \mbox{for} \ t>0.
\end{equation}
If there exists a finite time $t_{1} > 0$ and some $q > 0$ such that
\begin{equation}
\int^{t+q}_{t}\zeta(\tau) d\tau \leq A,  \ \int^{t+q}_{t}\beta(\tau) d\tau \leq B, \ \mbox{and} \ \int^{t+q}_{t} h(\tau) d\tau \leq C,
\end{equation}
for any $t > t_{1}$, where $A, B$, and $C$ are some positive constants, then
\begin{equation}
\beta(t) \leq \left(\frac{B}{q}+C\right)e^{A}, \ \mbox{for \ any} \ t > t_{1}+q.
\end{equation}
\end{lemma}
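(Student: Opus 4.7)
The plan is to combine the classical (integral form of) Gronwall's inequality with an averaging trick over a window of length $q$. The punch line is that the pointwise value $\beta(t)$ gets controlled by the average $\frac{1}{q}\int_{t-q}^{t}\beta(s)\,ds$, which is in turn bounded by $B/q$ by hypothesis; this is exactly how the stated constants arise.

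First, I would apply the standard integrating-factor argument to the differential inequality. Multiplying $\beta' \leq \zeta\beta + h$ by $\exp\!\left(-\int_{s}^{t}\zeta\right)$ and integrating from $s$ to $t$ (using absolute continuity of $\beta$) yields the variation-of-constants style bound
\[
\beta(t) \leq \beta(s)\exp\!\left(\int_{s}^{t}\zeta(\tau)\,d\tau\right) + \int_{s}^{t} h(\tau)\exp\!\left(\int_{\tau}^{t}\zeta(\sigma)\,d\sigma\right)d\tau,
\]
valid for any $0 < s < t$.

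Second, I would fix $t > t_{1}+q$ and restrict $s$ to the window $[t-q,\,t] \subset (t_{1},\infty)$. On this window all three hypotheses are immediately available by positivity and monotonicity of the integrals: $\int_{s}^{t}\zeta \leq A$ and $\int_{s}^{t} h \leq C$, while $\int_{t-q}^{t}\beta \leq B$ will be used in the last step. The previous estimate therefore collapses to
\[
\beta(t) \leq e^{A}\beta(s) + e^{A} C \qquad \text{for every } s \in [t-q,\,t].
\]

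Finally, I would average this inequality by integrating in $s$ over the window $[t-q,\,t]$. Using $\int_{t-q}^{t}\beta(s)\,ds \leq B$ and dividing by $q$ gives the required conclusion $\beta(t) \leq (B/q + C)\,e^{A}$. There is no real obstacle; the only subtlety is that the admissibility of the window $[t-q,\,t]$ is precisely what forces the conclusion to be stated for $t > t_{1}+q$ rather than $t > t_{1}$, and the absolute continuity of $\beta$ is needed only so that the integrating-factor step is legitimate a.e.
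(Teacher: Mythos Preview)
Your argument is correct and is in fact the standard proof of the uniform Gronwall lemma (see, e.g., Temam or Sell--You). Note that the paper does not supply its own proof of this lemma; it merely recalls the statement as a known tool, so there is no in-paper argument to compare against.
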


\subsection{Uniform $L^{2}(\Omega)$ and $H^{1}(\Omega)$ estimates}
In all estimates made hence forth the constants $C, C_{1}, C_{2}, C_{3}, C_{\infty}$ are generic constants, that can change in value from line to line, and sometimes within the same line if so required.
Using positivity of the solution, and comparison arguments, one is easily able to prove global existence of classical solutions to system 
\eqref{eq:x1}-\eqref{eq:x3}.
We state the following result.

\begin{proposition}
\label{ge1}
 All positive solutions of the system
\eqref{eq:x1}-\eqref{eq:x3}, with initial data in $\mathbb{L}^{\infty }(\Omega )$ are classical and global.
\end{proposition}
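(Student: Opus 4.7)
The plan is to combine standard parabolic local existence theory with a cascading sequence of $L^\infty$ comparison bounds obtained by exploiting the triangular structure of the couplings. First, I would invoke standard semigroup or Amann-type local well-posedness for the parabolic system \eqref{eq:x1}-\eqref{eq:x3}: away from the coordinate axes the ratio-dependent nonlinearities $\tfrac{uv}{u+v}$, $\tfrac{vr}{v+r}$ are locally Lipschitz and bounded (indeed $\tfrac{uv}{u+v}\leq \min(u,v)$), and the Leslie-Gower term is smooth because $v+D_3\geq D_3>0$. Positivity/invariance of the positive octant then follows from the parabolic maximum principle applied component-by-component: on $\{u=0\}$, $\{v=0\}$, $\{r=0\}$ the right-hand sides vanish (or, for $r$, the factor $r$ still factors out), so the nonnegative cone is positively invariant and the ratio-dependent terms can be extended smoothly by $0$ at the axes.

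Next I would derive a cascade of uniform $L^\infty$ bounds using scalar comparison. For $u$, dropping the nonpositive predation term gives $u_t - d_1\Delta u \leq u - u^2$, so comparison with the scalar logistic equation under homogeneous Neumann boundary conditions yields $\|u(t)\|_\infty \leq \max(\|u_0\|_\infty, 1) =: M_u$. For $v$, using $\tfrac{uv}{u+v}\leq u \leq M_u$ and discarding the nonpositive $w_3\tfrac{vr}{v+r}$ term yields $v_t - d_2\Delta v \leq -a_2 v + w_2 M_u$, so comparison with the associated linear ODE gives $\|v(t)\|_\infty \leq \max(\|v_0\|_\infty, w_2 M_u/a_2) =: M_v$. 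For $r$, note that the right-hand side $r(r-m)\bigl(c - \tfrac{w_4 r}{v+D_3}\bigr)$ is nonpositive whenever $r$ is either smaller than $m$ (with $r$ small enough that the last factor is positive, making the product nonpositive) or larger than $\tfrac{c(M_v+D_3)}{w_4}$; in particular, setting $M_r := \max\bigl(\|r_0\|_\infty,\, \tfrac{c(M_v+D_3)}{w_4}\bigr)$, one checks that on the level set $\{r = M_r\}$ the right-hand side is $\leq 0$, so $\|r(t)\|_\infty \leq M_r$ by the scalar maximum principle.

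With the triple $(u,v,r)$ uniformly bounded in $L^\infty(\Omega)$ on the interval of existence, the nonlinearities are globally bounded and Hölder, so standard parabolic regularity (Ladyzenskaja-Solonnikov-Ural'ceva, or iterated $L^p$ theory followed by Schauder) promotes the mild solution to a classical one, and the uniform bound rules out finite-time blow-up, giving global existence. The only delicate step is the bound on $r$: unlike a purely logistic top-predator, the Allee factor $(r-m)$ reverses sign at $r=m$, so one must verify that no trajectory can be pushed above $M_r$ through the "growth window" $m<r<\tfrac{c(v+D_3)}{w_4}$. This is handled by choosing $M_r$ above this window and noting that once $r\geq M_r$ the cubic $-\tfrac{w_4 r^3}{v+D_3}$ term dominates, ensuring $\partial_t r - d_3 \Delta r \leq 0$ pointwise on the level set, which is exactly what the scalar comparison principle requires.
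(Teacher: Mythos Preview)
Your proposal follows the same cascade-comparison strategy as the paper's proof in the appendix, but with sharper technical choices: the paper compares $u$ to the linear equation $u_t - d_1\Delta u = u$ and then $v$ to $v_t - d_2\Delta v = w_2 C_1 v$, obtaining bounds $C_1,C_2$ that depend on $T_{\max}$, whereas your logistic comparison for $u$ and the estimate $\tfrac{uv}{u+v}\leq u$ for $v$ give time-uniform constants directly. For $r$, the paper applies Young's inequality to absorb the $r^2$ term into the $r^3$ term and reduce to a linear inequality $r_t - d_3\Delta r + mcr \leq C(C_2)$, while you argue from the sign structure of the cubic at a fixed level set; both are legitimate and lead to the same conclusion via standard parabolic theory.

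One small slip in your $r$ bound: as written, $M_r=\max\bigl(\|r_0\|_\infty,\tfrac{c(M_v+D_3)}{w_4}\bigr)$ can be smaller than $m$ (when both entries are below $m$), and then on $\{r=M_r\}$ the factors satisfy $M_r-m<0$ and $c-\tfrac{w_4 M_r}{v+D_3}\leq 0$, giving a \emph{nonnegative} right-hand side. Taking $M_r:=\max\bigl(\|r_0\|_\infty,\,m,\,\tfrac{c(M_v+D_3)}{w_4}\bigr)$ repairs this instantly, since then $M_r-m\geq 0$ and the product is nonpositive as needed.
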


See appendix \ref{app0} for the proof.
Although we have global existence of classical via proposition \ref{ge1}, we can actually derive uniform $L^{\infty}(\Omega)$ bounds, for initial data only in $L^2(\Omega)$. This will facilitate estimates required to show existence of bounded absorbing sets and the compactness of trajectories, which will lead to the existence of a global attractor in the product space $L^2(\Omega)$. Since the construction of global attractors require a Hilbert space setting, where the appropriate function spaces are the Hilbert spaces $L^{2}(\Omega)$ and $H^{s}(\Omega)$, $s=1,2$, we want to proceed by showing there is a weak solution in these function spaces. We then make estimates on this class of solutions, to proceed with the requisite analysis for the global attractors. We state the following theorem:
 
\begin{theorem}
\label{thm:csol}
Consider the three species food chain model described via \eqref{eq:x1}-\eqref{eq:x3}. For any initial data $(u_0,v_0,r_0)$ in  $L^{2}(\Omega)$, and spatial dimension $n=1, 2, 3$, there exists a global weak solution $(u,v,r)$ to the system, which becomes a strong solution and then a classical solution.
\end{theorem}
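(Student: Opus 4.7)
The plan is to construct the solution by regularization of the initial data and then exploit parabolic smoothing to upgrade regularity. Approximate $(u_0, v_0, r_0) \in L^2(\Omega)^3$ by a sequence of nonnegative data $(u_0^\varepsilon, v_0^\varepsilon, r_0^\varepsilon) \in L^\infty(\Omega)^3$ converging in $L^2$. By Proposition \ref{ge1}, each approximation yields a global classical solution $(u^\varepsilon, v^\varepsilon, r^\varepsilon)$. The theorem will follow from uniform-in-$\varepsilon$ a priori estimates, a compactness argument to extract a weak limit, and a bootstrap that lifts this limit to a strong and then classical solution for $t > 0$.

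The first step is to derive uniform $L^2$ bounds. Testing \eqref{eq:x1}--\eqref{eq:x2} against $u^\varepsilon$ and $v^\varepsilon$ respectively, and using the a priori inequality $\frac{uv}{u+v}\le \min(u,v)$ valid on the positive cone, together with Young's inequality, gives an absorbing differential inequality for $\|u^\varepsilon\|_2^2 + \|v^\varepsilon\|_2^2$. For \eqref{eq:x3}, testing against $r^\varepsilon$ produces the quartic dissipation $-\frac{w_4}{v^\varepsilon + D_3}(r^\varepsilon)^4$, whose coefficient is bounded below by $w_4/(\|v^\varepsilon\|_\infty + D_3)$ on compact time intervals, and this term absorbs the cubic and lower-order positive contributions by Young's inequality. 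This produces both a uniform bound in $H$ and time-integrability of the $H^1$ seminorm on any window $[t, t+q]$.

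Next, I would bootstrap from $L^2$ to $H^1$ and then to $H^2$. Testing against $-\Delta u^\varepsilon$, $-\Delta v^\varepsilon$ and $-\Delta r^\varepsilon$, the diffusion terms provide $d_i \|\Delta\cdot\|_2^2$ which, combined with Young's inequality, controls the reaction contributions. Applying the Uniform Gronwall Lemma \ref{lem:gronwall} with the $L^2$ integrability of $\|\nabla\cdot\|_2^2$ already established yields a uniform $H^1$ bound for $t \geq t_1 > 0$, i.e.\ instantaneous smoothing. Iterating the argument once more gives uniform $H^2$ bounds on $(u^\varepsilon, v^\varepsilon, r^\varepsilon)$ for $t > 0$. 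Since $n \leq 3$, the embedding $H^2(\Omega) \hookrightarrow L^\infty(\Omega)$ immediately places the solution in $L^\infty$, at which point Proposition \ref{ge1} applies and Schauder parabolic regularity promotes it to a classical solution. The global weak solution on $[0,T]$ is then extracted by Aubin--Lions compactness: the uniform $L^2(0,T; H^1)$ estimate together with $L^2(0,T; H^{-1})$ bounds on the time derivatives (read off from the equations) give strong convergence in $L^2(0,T; L^2(\Omega))$ along a subsequence, and continuity plus boundedness of $\frac{uv}{u+v}$ and $\frac{vr}{v+r}$ on the positive cone allow passage to the limit in the weak formulation.

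The main obstacle I expect is closing the $H^1$ estimate for the $r$ equation. The Allee--Leslie--Gower nonlinearity $r(r-m)\bigl(c - \frac{w_4 r}{v + D_3}\bigr)$ contains a favorably signed quartic dissipation $-\frac{w_4}{v+D_3}r^4$, but also cubic terms in $r$ of indefinite sign and a cross term of the form $\frac{w_4 r^3}{v+D_3}$ that, after testing against $-\Delta r^\varepsilon$, must be dominated simultaneously by the quartic dissipation and by $d_3 \|\Delta r^\varepsilon\|_2^2$ via a careful Young's inequality splitting. Once this estimate is closed independently of $\varepsilon$ and of $m$ (the latter being needed in view of the upper semicontinuity result Theorem \ref{thm:gaus1sc}), the remainder of the argument is the standard construction of a global weak solution coupled with parabolic smoothing.
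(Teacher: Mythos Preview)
Your approach is correct and close in spirit to the paper's, but the implementation differs: the paper proceeds by Galerkin truncation (see Remark \ref{ab}), deriving the same $L^2$ and $H^1$ a priori bounds on the finite-dimensional approximations and passing to the limit via standard compactness, rather than by regularizing the initial data in $L^\infty$ and invoking Proposition \ref{ge1}. Both routes rest on identical energy estimates, so neither buys a decisive advantage; your version has the minor benefit of working directly with classical solutions of the full PDE at the approximate level, which makes the bootstrap to strong and classical regularity slightly more transparent.

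One point in your outline needs care. In the $L^2$ estimate for $r^\varepsilon$ you invoke the lower bound $w_4/(\|v^\varepsilon\|_\infty + D_3)$ on the quartic dissipation coefficient, but the $L^\infty$ bound on $v^\varepsilon$ coming directly from Proposition \ref{ge1} depends on $\|v_0^\varepsilon\|_\infty$, which is not uniform in $\varepsilon$ when you regularize $L^2$ data. The paper avoids this by exploiting the hierarchical structure of the system: the estimates for $u$ and $v$ in Lemma \ref{lem:lemba} and the $H^2$ bound of Remark \ref{rv} close \emph{without} any information on $r$, because $\frac{vr}{v+r}\le v$ eliminates all $r$-dependence on the right-hand side. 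One therefore first obtains uniform-in-$\varepsilon$ $H^2$ (hence $L^\infty$) control on $v^\varepsilon$ via the uniform Gronwall lemma, depending only on $\|u_0\|_2,\|v_0\|_2$, and only afterwards feeds this into the $r$ equation. With that order of operations imposed, your sketch goes through as written.
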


The proofs of proposition \ref{ge1} and theorem \ref{thm:csol} follow via remark \ref{ab} and the estimates referred to therein.

We state the following result next, 
\begin{lemma}
\label{lem:lemba}
Consider $(u,v,r)$ that are solutions to the diffusive three species food chain model described via \eqref{eq:x1}-\eqref{eq:x3}, for any $(u_{0},v_{0},r_{0}) \in L^{2}(\Omega)$, and spatial dimension $n=1, 2, 3$, there exists a time $t^{*}(||u_{0}||_{2},||v_{0}||_{2})$ , and a constant $C$ independent of time, initial data and the allee threshold $m$,
and dependent only on the other parameters in \eqref{eq:x1}-\eqref{eq:x3}, such that for any $t > t^{*}$ the following uniform estimates hold:

\begin{equation}
\label{eq:xnn1}
||u||^{2}_{2} \leq C, || v||^{2}_{2} \leq C, \int^{t+1}_{t}||\nabla u||^{2}_{2}ds \leq C, \int^{t+1}_{t}||\nabla v||^{2}_{2}ds \leq C, ||\nabla u||^{2}_{2} \leq C,  ||\nabla v||^{2}_{2} \leq C.
\end{equation}

\end{lemma}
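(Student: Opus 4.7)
The plan is to establish the six estimates in the order listed, exploiting that the Allee threshold $m$ appears only in \eqref{eq:x3}, so $m$-independence of all bounds is automatic provided the coupling of $u,v$ to $r$ is handled by simply discarding the non-positive predation term $-w_3 vr/(v+r)$ in \eqref{eq:x2}. The structural facts I would use repeatedly are the pointwise majorizations $uv/(u+v) \leq \min(u,v)$, the elementary inequality $x^2 - x^3 \leq 4/27$ on $[0,\infty)$, H\"older's inequality, and the Uniform Gronwall Lemma~\ref{lem:gronwall}.

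First I would test \eqref{eq:x1} against $u$ in $L^2(\Omega)$; dropping the non-positive interaction term gives
\begin{equation*}
\frac{1}{2}\frac{d}{dt}\|u\|_2^2 + d_1\|\nabla u\|_2^2 \leq \|u\|_2^2 - \|u\|_3^3,
\end{equation*}
and since H\"older yields $\|u\|_3^3 \geq \|u\|_2^3$ the right-hand side is uniformly bounded above and strictly dissipative once $\|u\|_2$ is large. A standard ODE comparison then produces an $L^2$-absorbing ball for $u$, uniform in $m$, after some time $t_1^{*}(\|u_0\|_2)$. Next, testing \eqref{eq:x2} against $v$, using $uv^2/(u+v) \leq uv$ and Cauchy--Schwarz gives
\begin{equation*}
\frac{1}{2}\frac{d}{dt}\|v\|_2^2 + d_2\|\nabla v\|_2^2 + a_2\|v\|_2^2 \leq w_2 \|u\|_2\|v\|_2,
\end{equation*}
so Young's inequality combined with the just-obtained bound on $\|u\|_2$ produces $\tfrac{d}{dt}\|v\|_2^2 + a_2\|v\|_2^2 \leq C$, yielding an $L^2$-absorbing ball for $v$ from some later time $t_2^{*}$. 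Setting $t^{*}=\max(t_1^{*},t_2^{*})$ establishes the first two bounds in \eqref{eq:xnn1}; integrating these two energy inequalities over $[t,t+1]$ and using the $L^2$-bounds on the boundary terms immediately yields the two integrated-gradient estimates.

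For the pointwise $H^1$ bounds, the plan is to test \eqref{eq:x1} against $-\Delta u$ and \eqref{eq:x2} against $-\Delta v$, integrate by parts, and use Young's inequality together with Sobolev/Gagliardo--Nirenberg embeddings (valid for $n\leq 3$) to absorb every nonlinear contribution into the top-order dissipation $d_i\|\Delta\cdot\|_2^2$. This produces a differential inequality of the form $\tfrac{d}{dt}\|\nabla u\|_2^2 \leq \zeta(t)\|\nabla u\|_2^2 + h(t)$ with $\zeta$ and $h$ having uniformly controlled integrals over unit time windows. The Uniform Gronwall Lemma~\ref{lem:gronwall}, applied with $\beta=\|\nabla u\|_2^2$ and the integrated bound $B = \int_{t}^{t+1}\|\nabla u\|_2^2\,ds \leq C$ from the previous step, then delivers the pointwise $H^1$ estimate; the $v$-case is completely analogous.

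The principal obstacle I anticipate is the superlinear reaction term $\int u^2 \Delta u$ in the $H^1$-estimate for $u$: after integration by parts it becomes $-2\int u|\nabla u|^2$, which in dimension $n=3$ must be bounded via Gagliardo--Nirenberg with carefully tracked exponents so as to be absorbed into a small fraction of $\|\Delta u\|_2^2$ rather than inflate the Gronwall coefficient $\zeta$ beyond uniform integrability over unit windows. The ratio-dependent cross terms, being pointwise bounded by $u$ or $v$, cause no serious difficulty.
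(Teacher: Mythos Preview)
Your proposal is correct and matches the standard approach the paper defers to (the paper simply writes ``This follows easily via the estimates and methods of \cite{P10,PK13,PK14}'', noting that only the $r$-equation differs from those works). Your key structural observation---that the coupling to $r$ in \eqref{eq:x2} enters only through the non-positive term $-w_3 vr/(v+r)$, which can be dropped, so the $u,v$ estimates are automatically $m$-independent---is exactly the point the paper emphasizes (cf.\ Remark~\ref{rv}).

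One small correction: the ``principal obstacle'' you anticipate is a phantom. After integration by parts the term $\int_{\Omega} (-u^2)(-\Delta u)\,dx$ becomes $-2\int_{\Omega} u|\nabla u|^2\,dx$, which is \emph{non-positive} since $u\geq 0$, and can simply be dropped rather than estimated via Gagliardo--Nirenberg. The $H^1$ differential inequality for $u$ therefore reduces to $\tfrac{d}{dt}\|\nabla u\|_2^2 + d_1\|\Delta u\|_2^2 \leq 2\|\nabla u\|_2^2 + C\|u\|_2^2$, with constant $\zeta$ and bounded $h$; the Uniform Gronwall Lemma applies immediately with no careful exponent tracking needed. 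The $v$-estimate is even cleaner, since every cross term is pointwise bounded by $v$.
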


This follows easily via the estimates and methods of \cite{P10,PK13,PK14}. 
%
%
%Note that via the explicit estimates we have that the above $C$ is chosen as 
%
%\begin{eqnarray}
%&& \max(1 + \frac{ 4 |\Omega|^{\frac{1}{2}}}{3}C_{1}, \nonumber \\
%&& 1 + (\frac{(2w_{2}(1+a_{2}))^2}{2\min(a_{2},d_{2})} + 1 + \frac{ 4 |\Omega|^{\frac{1}{2}}}{3}C_{1} + \frac{(2(d_{2}-d_{1})w_1)^2}{2\min(a_{2},d_{2})}C_{3} )\frac{1}{\min(a_{2},d_{2})}, \nonumber \\
%&&\frac{1}{2d_{1}}\left( 1 + \frac{ 4 (a_{1})^{\frac{3}{2}}|\Omega|^{\frac{1}{2}}}{3b_{1}} \frac{C_{1}}{b_{1}}\right) + \frac{1}{d_{1}}\frac{ 4 (a_{1})^{\frac{3}{2}}|\Omega|^{\frac{1}{2}}}{3b_1} , \nonumber \\
%&& \frac{1}{\min(a_2,d_{2})} \left(C + \frac{(2w_2(a_{1}+a_{2}))^2}{2\min(a_{2},d_{2})}C +  \frac{1}{2d_1}\left(1 + \frac{CC_1}{b_{1}}\right)+\frac{C}{d_1} \right) , \nonumber \\
%&& \left( \frac{1}{2d_{1}}\left( 1 + \frac{ 4 |\Omega|^{\frac{1}{2}}}{3}C_{1}\right) + \frac{1}{d_1}\frac{ 4 |\Omega|^{\frac{1}{2}}}{3} \right)e^{a_{1}}  + \frac{w_{1}}{d_{1}}\left( 1 + \frac{ 4 |\Omega|^{\frac{1}{2}}}{3}C_{1}\right)e^{a_{1}} , \nonumber \\
%&& C+ \max\left( \frac{2(w_2)^{2}}{d_{1}},\frac{2(w_3)^{2}}{d_{1}}\right)\left(1 + \frac{ 4 |\Omega|^{\frac{1}{2}}}{3}C_{1}\right)
% + \max\left( \frac{2(w_2)^{2}}{d_{1}},\frac{2(w_3)^{2}}{d_{1}}\right)\times \nonumber\\
%&&\left(  1 + \left(\frac{(2w_2(1+a_{2}))^2}{2\min(a_2,d_{2})}+1 + \frac{ 4 |\Omega|^{\frac{1}{2}}}{3}C_{1} + \frac{(2(d_2-d_1)w_1)^2}{2\min(a_2,d_{2})}C_{3} \right)\frac{1}{\min(a_2,d_{2})}\right)  )\nonumber \\
%\end{eqnarray}
Note what differs here from  \cite{P10,PK13,PK14} is the equation for $r$, particularly due to a Allee effect now being modeled. 

We now proceed with the estimate on $r$. We multiply \eqref{eq:x3} by $r$ and integrate by parts to obtain

\begin{equation}
\label{eq:x11}
\frac{1}{2}\frac{d}{dt}||r||^{2}_{2}  + d_{3}||\nabla r||^{2}_{2} + \frac{w_3}{||v||_{\infty}+D_{3}}||r||^{4}_{4} \leq \left(c+\frac{mw_{3}}{v+D_{3}} \right)||r||^{3}_{3}.
\end{equation}

Thus we obtain

\begin{equation}
\label{eq:x11n7}
\frac{1}{2}\frac{d}{dt}||r||^{2}_{2}  + d_{3}||\nabla r||^{2}_{2} + cm||r||^{2}_{2} + \frac{w_3}{||v||_{\infty}+D_{3}}||r||^{4}_{4} \leq \left(c+\frac{mw_{3}}{v+D_{3}} \right)||r||^{3}_{3}.
\end{equation}

We then use H\"{o}lder's inequality followed by Young's inequality to obtain

\begin{eqnarray}
\label{eq:x11r}
&&\frac{1}{2}\frac{d}{dt}||r||^{2}_{2}  + d_{3}||\nabla r||^{2}_{2} + cm||r||^{2}_{2} + \frac{w_3}{||v||_{\infty}+D_{3}}||r||^{4}_{4},  \nonumber \\
&& \leq \frac{w_3}{||v||_{\infty}+D_{3}}||r||^{4}_{4} + C_{1}(||v||_{\infty} + D_{3} + m^4)|\Omega|. \nonumber \\
\end{eqnarray}

Thus we obtain

\begin{equation}
\label{eq:x11n8}
\frac{1}{2}\frac{d}{dt}||r||^{2}_{2}  + cm||r||^{2}_{2}  \leq  C_{1}(||v||_{\infty} + D_{3} + m^4)|\Omega|,
\end{equation}

which implies

\begin{equation}
\label{eq:x11n2n}
||r||^{2}_{2} \leq e^{-cmt}||r_{0}||^{2}_{2} + \frac{ C_{1}(||v||_{\infty} + D_{3} + m^4)|\Omega|}{cm}.
\end{equation}

We now use the estimate for $||v||_{\infty}$ via \eqref{eq:liea}  (which does not depend on $r$ see remark \ref{rv}) to obtain 
the following estimates for $r$,

\begin{equation}
\label{eq:x11n2}
||r||^{2}_{2} \leq 1 + \frac{ C_{1}(C + D_{3} + m^4)|\Omega|}{cm},  \  \mbox{for} \ t > \max\left(t^{*}+1, \frac{ \ln(||r_{0}||^{2}_{2})}{  cm }\right).
\end{equation}

Integrating \eqref{eq:x11r} in the time interval $[t,t+1]$  we obtain
\begin{equation}
\label{eq:x11n23n}
 \int^{t+1}_{t}||\nabla r||^{2}_{2} ds \leq 1 + \frac{ C_{1}(C + D_{3} + m^4)|\Omega|}{cm} + 2|\Omega|^{\frac{1}{4}}\left(c+\frac{w_{4}}{D_{3}}\right) \left(\frac{w_{4}}{D_{3}}\right)^{\frac{1}{4}}, 
\end{equation}

for $t >  \max\left(t^{*}+1, \frac{ \ln(||r_{0}||^{2}_{2})}{  cm }\right)$.

Here, $C, C_{1}$ do not depend on $m$.

\begin{remark}
What we notice is that the estimate via \eqref{eq:x11n2}, \eqref{eq:x11n23n} depend singularly on $m$. That is they yield no information, if we try to pass to the limit as $m \rightarrow 0$
\end{remark}

We now estimate the gradient of $r$. See the appendix \ref{app2} for details. What the standard analysis in appendix \ref{app2} shows, is that we have an estimate of the form

\begin{equation}
\label{eq:f1ns}
 \mathop{\limsup}_{t \rightarrow \infty} ||\nabla r||^{2}_{2}  \leq C, 
\end{equation}

\begin{remark}

The constant $C$ in \eqref{eq:f1ns} is independent of time and initial conditions, \emph{but depend singularly on} $m$.
In our estimates we apply the uniform Gronwall lemma which requires us to use the estimate via \eqref{eq:x11n23n}. This also depends singularly on $m$, thus the estimate \eqref{eq:f1ns} is uniform with respect to time and initial data but depend singularly on $m$.
\end{remark}

\begin{remark}
\label{ab}
The uniform $H^{1}(\Omega)$ estimates via lemma \ref{lem:lemba} and \eqref{eq:f1d4nn} give us uniform $L^{6}(\Omega)$ bounds in $\mathbb{R}^{3}$. We can now prove the reaction terms are in $L^{p}(\Omega)$ for $p>\frac{3}{2}$. It suffices to  show that 

\begin{equation}
||u-u^{2}-w_{1}\frac{uv}{u+v}||_{2} \leq C||u||^{2}_{4} \leq C,
\end{equation}

\begin{equation}
||-a_{2}v+w_{2}\frac{uv}{u+v}-w_{3}\left(\frac{vr}{v+r}\right)||_{2} \leq C||v||_{2} \leq C,
\end{equation}

\begin{equation}
||r\big({{r-m}}\big)\bigg(c-\frac{w_4r} {v+D_3}\bigg)||_{2} \leq C||r||^{3}_{6} \leq C.
\end{equation}

But the above follows via the uniform $L^{6}(\Omega)$ bounds on $u,v,r$. This proves proposition \ref{ge1}. Furthermore the estimate via \eqref{eq:f1ns} and lemma \ref{lem:lemba} allow us to derive the appropriate $L^2(\Omega)$
and $H^{1}(\Omega)$ bounds on a Galerkin truncation of \eqref{eq:x1}-\eqref{eq:x3}, extract appropriate subsequebces and pass to the limit as is standard \cite{T97, SY02}, to prove theorem \ref{thm:csol}.
 
\end{remark}

\subsection{Uniform in the parameter $m$ $L^{2}(\Omega)$ and $H^{1}(\Omega)$ estimates for $r$}
Note the estimates via \eqref{eq:x11n2}, \eqref{eq:f1ns} are singular in $m$. This causes extensive difficulties if we try to pass to the limit as $m \rightarrow 0$, hence making it difficult to prove upper semicontinuity. Our next goal is to derive \emph{uniform in $m$ estimate} on $r$. These are derived in detail in the appendix \ref{app3}. However, the key estimate derived therein is

\begin{eqnarray}
\label{eq:naa}
||\nabla r||^{2}_{2}  \leq C_{K} e^{K C_{K}}, \ \mbox{for} \ t > t_{1}=t_{0} + 1.
\end{eqnarray}

Here $C_{K}, K$ are independent of $m$. Thus the $H^{1}(\Omega)$ estimate for $r$ can be made independent of $m$.

\subsection{Uniform $H^{2}(\Omega)$ estimates}

We will now estimate the $H^2(\Omega)$ norms of the solution. We rewrite \eqref{eq:x1} as
\[
%\label{eq:x1pn1}
u_t- d_1 \Delta u =  u-u^{2}-w_{1}\left(\frac{u v }{u+v}\right).
\]

\noindent We square both sides of the equation and integrate by parts over $\Omega$ to obtain
\begin{eqnarray}
\label{eq:x1pq}
&&||u_t||^{2}_{2} + d_{1}||\Delta u||^{2}_{2}  + \frac{d}{dt}||\nabla u||^{2}_{2} , \nonumber \\
&=& \left\| \left( u-u^{2}-w_{1}\left(\frac{u v }{u+v}\right) \right) \right\|_2^2  ,\nonumber \\
&\leq& C\left(||u||^{2}_{2}+(w_{1})^2||u||^{2}_{2}  + ||u||^{4}_{4}\right) \leq C. \nonumber
\end{eqnarray}
\noindent This result follows by the embedding of $H^{1}(\Omega) \hookrightarrow L^4(\Omega) \hookrightarrow L^2(\Omega)$.  Therefore, we obtain
\[
||u_t||^{2}_{2} + d_{1}||\Delta u||^{2}_{2}  + \frac{d}{dt}||\nabla u||^{2}_{2}  \leq C\left(||u||^{2}_{2}+(w_{1})^2||u||^{2}_{2}  + ||u||^{4}_{4}\right) \leq C.
\]

We now make uniform in time estimates of the higher order terms. Integrating the estimates of \eqref{eq:x1pq} in the time interval $[T,T+1]$, for $T > t^{*}$%
\[
\int^{T+1}_{T}||u_t||^{2}_{2} dt \leq C_1, \   \int^{T+1}_{T}||\Delta u||^{2}_{2}dt  \leq C_2.
\]
\noindent Similarly we adopt the same procedure for $v$ to obtain
\[
\int^{T+1}_{T}||v_t||^{2}_{2}dt \leq C_1, \   \int^{T+1}_{T}||\Delta v||^{2}_{2}dt \leq C_2.
\]

Next, consider the gradient of \eqref{eq:x1}. Following the same technique as in deriving \eqref{eq:x1pq} we obtain for the left hand side
\begin{eqnarray}
\label{eq:gut1}
&& ||\nabla u_t||^{2}_{2} + d_{1}||\nabla (\Delta u)||^{2}_{2}  + \frac{d}{dt}||\Delta u||^{2}_{2} + \int_{\partial \Omega}\Delta u \nabla u_{t} \cdot \textbf{n} dS, \nonumber \\
&&=||\nabla u_t||^{2}_{2} + d_{1}||\nabla (\Delta u)||^{2}_{2}  + \frac{d}{dt}||\Delta u||^{2}_{2} + \int_{\partial \Omega}\Delta u \frac{\partial}{\partial t}(\nabla u \cdot \textbf{n} )dS, \nonumber \\
&&= ||\nabla u_t||^{2}_{2} + d_{1}||\nabla (\Delta u)||^{2}_{2}  + \frac{d}{dt}||\Delta u||^{2}_{2}.
\end{eqnarray}
\noindent This follows via the boundary condition. Thus we have
\begin{eqnarray}
\label{eq:x1pq1}
&& ||\nabla u_t||^{2}_{2} + d_{1}||\nabla (\Delta u)||^{2}_{2}  + \frac{d}{dt}||\Delta u||^{2}_{2},  \nonumber \\
&& = \left(\nabla ( u-u^{2}-w_{1}\left(\frac{u v }{u+v}\right) )\right)^2, \nonumber \\
&& \leq C(|| u||^{4}_{4}  + || v||^{4}_{4}  + || \Delta u||^{4}_{2} + || \nabla v||^{2}_{2} +  || \nabla u||^{2}_{2}).
\end{eqnarray}
\noindent This follows via Young's inequality with epsilon, as well as the embedding of $H^{2}(\Omega)\hookrightarrow W^{1,4}(\Omega) \hookrightarrow H^{1}(\Omega)$. This implies that
\[
\frac{d}{dt}||\Delta u||^{2}_{2}  \leq C || \Delta u||^{4}_{2} + C(|| u||^{4}_{4}  + || v||^{4}_{4}  + || \nabla v||^{2}_{2} +  || \nabla u||^{2}_{2}).
\]
\noindent We now use the uniform Gronwall lemma with
\[ \beta(t) = ||\Delta u||^{2}_{2}, \ \zeta(t)=||\Delta u||^{2}_{2} , \ h(t)= C(|| u||^{4}_{4}  + || v||^{4}_{4}  + || \nabla v||^{2}_{2} +  || \nabla u||^{2}_{2}),~~ q=1, \]
\noindent to obtain
\[
||\Delta u||_{2} \leq C,  \ ||\Delta v||_{2} \leq C, \ \mbox{for} \ t  > t^{*} + 1.
\]
The estimate for $v$ is derived similarly.
\begin{remark}
\label{rv}
It is critical to notice that the estimate for $||\Delta v||_{2}$ does not involve $r$. This is because in order to derive it, we adopt same procedure as in \eqref{eq:x1pq}, instead for the $v$ equation, and square both sides to proceed. Therein notice that the term involving $r$, $\frac{vr}{v+r}$ can be estimated as $||\frac{vr}{v+r}||_{2} \leq C||\frac{r}{v+r}||_{\infty} ||v||_{2} \leq C_{1} ||v||_{2}$. So the right hand side has no dependence on $r$.
\end{remark}

In order to estimate the $H^2(\Omega)$ norm of $r$ we proceed similarly to obtain

\begin{eqnarray}
\label{eq:x1pqr}
||r_t||^{2}_{2} + d_{3}||\Delta r||^{2}_{2}  + \frac{d}{dt}||\nabla r||^{2}_{2}&=& \left\|  \left( \left( c+\frac{mw_{4}}{v+D_{3}}\right)r^{2} - \frac{w_{4}}{v+D_{3}}r^{3} - mcr  \right) \right\|_2^2,  \nonumber \\
&\leq& C\left(||r||^{2}_{2}+||r||^{4}_{4}\right)  + ||r||^{6}_{6}, \nonumber \\
&\leq & C \left(||\nabla r||^{2}_{2}\right)^{3}. \nonumber \\
\end{eqnarray}

We now make uniform in time estimates of the higher order terms. Integrating the estimates of \eqref{eq:x1pqr} in the time interval $[T,T+1]$, for $T > t_{1}$, and using the estimates via \eqref{eq:nrn} yields
\[
\int^{T+1}_{T}||r_t||^{2}_{2} dt \leq C_1, \  \int^{T+1}_{T}||\Delta r||^{2}_{2}dt \leq C_2.
\]

Next, consider the gradient of \eqref{eq:x3}. Following the same technique as in deriving \eqref{eq:x1pq} we obtain 

%for the left hand side
%%
%\begin{eqnarray}
%\label{eq:gut1r}
%&& ||\nabla r_t||^{2}_{2} + d_{3}||\nabla (\Delta r)||^{2}_{2}  + \frac{d}{dt}||\Delta r||^{2}_{2} + \int_{\partial \Omega}\Delta r \nabla r_{t} \cdot \textbf{n} dS \nonumber \\
%&&=||\nabla r_t||^{2}_{2} + d_{3}||\nabla (\Delta r)||^{2}_{2}  + \frac{d}{dt}||\Delta r||^{2}_{2} + \int_{\partial \Omega}\Delta r \frac{\partial}{\partial t}(\nabla r \cdot \textbf{n} )dS \nonumber \\
%&&= ||\nabla r_t||^{2}_{2} + d_{3}||\nabla (\Delta r)||^{2}_{2}  + \frac{d}{dt}||\Delta r||^{2}_{2}
%\end{eqnarray}
%%
%\noindent This follows via the boundary condition. Thus we have

\begin{eqnarray}
\label{eq:x1pqr4}
 ||\nabla r_t||^{2}_{2} + d_{3}||\nabla (\Delta r)||^{2}_{2}  + \frac{d}{dt}||\Delta r||^{2}_{2}&& = \left(\nabla \left( \left( c+\frac{mw_{4}}{v+D_{3}}\right)r^{2} - \frac{w_{4}}{v+D_{3}}r^{3} - mcr  \right) \right)^2, \nonumber \\
&& \leq C\left( ||\nabla r||^{2}_{2}\right)^{2}. \nonumber \\
\end{eqnarray}

This yields 

\begin{equation}
\label{eq:lieah2}
 \frac{d}{dt}||\Delta r||^{2}_{2}  \leq C\left( ||\nabla r||^{2}_{2}\right)^{2} \left( ||\Delta r||^{2}_{2}\right)+ C_{2}\left( || r||^{2}_{2}\right)^{2}.
\end{equation}

We can now use the estimates via \eqref{eq:rl2n}, \eqref{eq:nrn} in conjunction with the uniform Gronwall lemma to yield a uniform bound on $||\Delta r||_{2}$.
\noindent Thus, via elliptic regularity,
\begin{equation}
\label{eq:h2ea}
|| u||_{H^{2}(\Omega)} \leq C,  \ ||v||_{H^{2}(\Omega)} \leq C, \  ||r||_{H^{2}(\Omega)}  \ \leq C \  \mbox{for} \ t > \max(t_{1}+1,t^{*} + 1).
\end{equation}
\noindent Since $H^2(\Omega) \hookrightarrow L^{\infty}(\Omega)$ in $\mathbb{R}^2$ and $\mathbb{R}^{3}$, the following estimate is valid in $\mathbb{R}^2$ and $\mathbb{R}^{3}$
\begin{equation}
\label{eq:liea}
||u||_{\infty} \leq C,  \ ||v||_{\infty} \leq C, \ ||r||_{\infty} \leq C  \ \mbox{for} \ t >  \max(t_{1}+1,t^{*} + 1).
\end{equation}

\section{Long Time Dynamics}
%=========================
\subsection{Existence of global attractor}
%=========================
In this section we prove the existence of a $(H,H)$ global attractor for system \eqref{eq:x1}-\eqref{eq:x3}, which is subsequently demonstrated to be a $(H,E)$ attractor. 
We now state the following theorem,

\begin{theorem}
\label{thm:ga1}
Consider the reaction diffusion equation described via \eqref{eq:x1}-\eqref{eq:x3} where $\Omega$ is of spatial dimension $n=1, 2, 3$. There exists a $(H,H)$ global attractor $\mathcal{A}$ for the system. This is compact and invariant in $H$, and it attracts all bounded subsets of $H$ in the $H$ metric.
\end{theorem}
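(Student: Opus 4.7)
The plan is to apply the standard dynamical systems theory for dissipative semigroups (as in Temam \cite{T97} or Sell and You \cite{SY02}): define a semigroup on $H$, exhibit a bounded absorbing set in $H$, and establish asymptotic compactness by upgrading the orbit to a smaller space compactly embedded in $H$. Virtually every ingredient needed is already available in the preceding estimates, so the work is largely one of assembly.

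First, I would invoke Theorem \ref{thm:csol} and Proposition \ref{ge1} to define the solution operator $S(t):H\to H$ by $S(t)(u_0,v_0,r_0) = (u(\cdot,t),v(\cdot,t),r(\cdot,t))$, and verify the semigroup properties $S(0)=I$, $S(t+s)=S(t)S(s)$, and continuity in $(t,\varphi)$ on $[0,\infty)\times H$ (continuity on initial data follows from a standard Gronwall argument on the difference of two solutions, using the uniform $L^{\infty}$ bound \eqref{eq:liea} on bounded sets of initial data). Next, I would produce a bounded absorbing set in $H$: Lemma \ref{lem:lemba} supplies $\|u(t)\|_2^2,\|v(t)\|_2^2\leq C$ for $t>t^{*}$, and estimate \eqref{eq:x11n2} gives the corresponding bound for $\|r(t)\|_2^2$ (with $m>0$ fixed here, since the theorem concerns existence, not robustness). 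Hence
\begin{equation*}
B_0 = \{\varphi\in H : \|\varphi\|_H\leq R_0\}
\end{equation*}
is a bounded absorbing set for $S(t)$ in $H$, for $R_0$ chosen larger than the asymptotic bound.

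For asymptotic compactness I would exploit the $H^1$-regularization of the semigroup. The uniform bounds $\|\nabla u\|_2,\|\nabla v\|_2\leq C$ from Lemma \ref{lem:lemba}, combined with the uniform bound \eqref{eq:f1ns} on $\|\nabla r\|_2$, show that for every bounded $B\subset H$ there exists $t_B>0$ such that $S(t)B$ is bounded in $E=H^1(\Omega)^3$ for all $t\geq t_B$. The Rellich-Kondrachov theorem gives the compact embedding $E \hookrightarrow\hookrightarrow H$, so $S(t)B$ is relatively compact in $H$ for $t\geq t_B$. This is exactly the asymptotic compactness (equivalently, $S(t)$ is eventually compact on bounded sets) needed for the classical attractor theorem.

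Applying the standard existence result (Theorem I.1.1 of Temam \cite{T97}), these two ingredients — a bounded absorbing set $B_0$ in $H$ and asymptotic compactness of $S(t)$ — imply that the omega-limit set
\begin{equation*}
\mathcal{A} = \bigcap_{s\geq 0}\overline{\bigcup_{t\geq s}S(t)B_0}^{H}
\end{equation*}
is the desired $(H,H)$ global attractor: it is nonempty, compact and invariant in $H$ and attracts every bounded subset of $H$ in the $H$ metric. The main obstacle, modest in this setting, is simply verifying $H$-continuity of $S(t)$ with respect to initial data in $H$; this is where the derived $L^{\infty}$ bound \eqref{eq:liea} plays an essential role in controlling the nonlinear ratio-dependent and Allee terms when subtracting two solutions. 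Once this is in place, the remaining steps are routine applications of the estimates already assembled in Section 2.
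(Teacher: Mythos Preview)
Your proposal is correct and follows essentially the same approach as the paper: well-posedness yields a semigroup on $H$, the $L^2$ and $H^1$ estimates give bounded absorbing sets, and asymptotic compactness comes from the compact embedding $E\hookrightarrow H$. You are actually more careful than the paper in explicitly citing \eqref{eq:x11n2} and \eqref{eq:f1ns} for the $r$-component (the paper simply refers to Lemma \ref{lem:lemba}, which technically only covers $u,v$), and in noting that continuity of $S(t)$ in initial data must be checked.
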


\textbf{proof}
We have shown that the system is well posed via theorems \ref{thm:csol}. Thus there exists a well defined semigroup $\left\{S(t)\right\}_{t \geq 0}:H \rightarrow H$. The estimates derived in Lemma \ref{lem:lemba}  demonstrate the existence of bounded absorbing sets in $H$ and $E$. Thus given a sequence $\left\{u_{0,n}\right\}^{\infty}_{n=1}$, that is bounded in $L^{2}(\Omega)$,  we know that for $t > t^{*}$, 
 \begin{equation}
 S(t)(u_{0,n}) \subset B \subset H^{1}(\Omega).
 \end{equation}
 Here $B$ is the bounded absorbing set in $H^{1}(\Omega)$ from lemma \ref{lem:lemba}. Now for n large enough $t_{n} > t^{*}$, thus for such $t_{n}$ we have
 
 \begin{equation}
 S(t_{n})(u_{0,n}) \subset B \subset H^{1}(\Omega).
 \end{equation}
 
 This implies that we have the following uniform bound, 
 
 \begin{equation}
 \label{eq:h1ga}
 ||S(t_{n})(u_{0,n})||_{H^{1}(\Omega)} \leq C ,
 \end{equation}
 
This implies via standard functional analysis theory, see \cite{T97}, the existence of a subsequence still labeled $S(t_{n})(u_{0,n})$ such that

 \begin{equation}
 S(t_{n})(u_{0,n}) \rightharpoonup u \ \mbox{in} \ H^{1}(\Omega),
 \end{equation}
 
 Which implies via the compact Sobolev embedding of 
 
\begin{equation}
E \hookrightarrow H,
\end{equation}

that

 \begin{equation}
 S(t_{n})(u_{0,n}) \rightarrow u \ \mbox{in} \ L^{2}(\Omega).
 \end{equation}
 This yields the asymptotic compactness of the semigroup $\left\{S(t)\right\}_{t \geq 0}$ in $H$. The convergences for the $v,r$ components follow similarly. The theorem is now proved.
$\square$

Next we can state the following theorem 

\begin{theorem}
\label{thm:ga2}
Consider the reaction diffusion equation described via \eqref{eq:x1}-\eqref{eq:x3} where $\Omega$ is of spatial dimension $n=1, 2, 3$. There exists a $(H,E)$ global attractor $\mathcal{A}$ for the system. This is compact and invariant in $H$, and it attracts all bounded subsets of $H$ in the $E$ metric.
\end{theorem}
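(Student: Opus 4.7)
The plan is to mirror the argument for Theorem \ref{thm:ga1}, but shift one rung up the Sobolev ladder: where that proof used the bounded absorbing set in $E$ together with the compact embedding $E \hookrightarrow H$ to extract a subsequence converging in the $H$ metric, I will use the bounded absorbing set in $X$ together with the compact embedding $X \hookrightarrow E$ to extract a subsequence converging in the $E$ metric. All the heavy lifting has already been done in the uniform $H^2(\Omega)$ estimates culminating in \eqref{eq:h2ea}, so this will essentially be an abstract dynamical systems argument.

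First I would note that, by theorem \ref{thm:csol} and the semigroup well-posedness already invoked in the proof of theorem \ref{thm:ga1}, we have a well-defined semigroup $\{S(t)\}_{t\geq 0}:H\to H$, and moreover the estimate \eqref{eq:h2ea} furnishes a set
\begin{equation*}
B_{X}=\{(u,v,r)\in X:\|u\|_{H^{2}}+\|v\|_{H^{2}}+\|r\|_{H^{2}}\leq C\}
\end{equation*}
which is bounded in $X$ and which absorbs every bounded subset of $H$ after some time $t^{**}=\max(t_{1}+1,t^{*}+1)$. In particular $B_{X}$ is bounded in $E$ by continuous embedding, so a bounded absorbing set in $E$ exists as well.

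Next I would establish asymptotic compactness in $E$. Given a bounded sequence $\{(u_{0,n},v_{0,n},r_{0,n})\}\subset H$ and $t_{n}\to\infty$, for $n$ large enough $t_{n}>t^{**}$ so that $S(t_{n})(u_{0,n},v_{0,n},r_{0,n})\subset B_{X}$, and hence
\begin{equation*}
\|S(t_{n})(u_{0,n},v_{0,n},r_{0,n})\|_{X}\leq C.
\end{equation*}
By weak compactness in $X$ and the compact Sobolev embedding $X\hookrightarrow E$ (which holds for $n=1,2,3$), a subsequence still labelled the same satisfies
\begin{equation*}
S(t_{n})(u_{0,n},v_{0,n},r_{0,n})\rightharpoonup (u,v,r)\ \text{in}\ X,\qquad S(t_{n})(u_{0,n},v_{0,n},r_{0,n})\rightarrow (u,v,r)\ \text{in}\ E.
\end{equation*}
This is precisely asymptotic compactness of $\{S(t)\}_{t\geq 0}$ in the $E$ metric, acting on $H$-bounded sets.

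Finally, combining the existence of a bounded absorbing set in $E$ with this $(H,E)$-asymptotic compactness, the standard construction (see \cite{T97,SY02}) yields a nonempty, compact, invariant set $\mathcal{A}\subset E$ that attracts all bounded subsets of $H$ in the $E$ metric; this is the desired $(H,E)$ global attractor. I do not expect genuine obstacles here: the only mild subtlety is ensuring that the time $t^{**}$ after which trajectories enter $B_{X}$ depends only on the $H$-bound of the initial data and not on finer information, which is guaranteed by the fact that the constants in \eqref{eq:h2ea} and in lemma \ref{lem:lemba} are independent of initial data. Uniqueness of the attractor follows as usual from its maximality among compact invariant sets.
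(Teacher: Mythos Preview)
Your proposal is correct and follows essentially the same approach as the paper: the paper's proof also mirrors that of Theorem \ref{thm:ga1}, obtaining the absorbing set in $E$ from the earlier estimates and establishing asymptotic compactness in $E$ via the uniform $H^{2}(\Omega)$ bounds in \eqref{eq:h2ea} together with the compact embedding $H^{2}(\Omega)\hookrightarrow H^{1}(\Omega)$. Your write-up is in fact more detailed than the paper's brief sketch, but the underlying argument is identical.
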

 
\textbf{proof}
The proof essentially follows verbatim theorem \ref{thm:ga1}. The existence of a bounded absorbing set in $E$ follows from lemma \ref{lem:lemba}. In order to prove the asymptotic compactness in $E$ we use the uniform $H^2(\Omega)$ estimates in \eqref{eq:h2ea}, and the compact Sobolev embedding of $H^2(\Omega) \hookrightarrow H^1(\Omega)$.
$\square$

\subsection{Upper-semicontinuity of Global attractor with respect to the Allee parameter $m$}
\label{alleeThreshold}

Here we state and prove the main semicontinuity result. This result follows via a series of estimates, and theorems, all of which we derive for the reader in appendix \ref{app4}

\begin{theorem}
\label{thm:gaus1sc}
Consider the reaction diffusion equation described via \eqref{eq:x1}-\eqref{eq:x3} where $\Omega$ is of spatial dimension $n=1, 2, 3$. Given a set of positive parameters excluding $m$, the family of global attractors is upper semicontinuous with respect to the Allee threshold $m \geq 0$ as it converges to zero. That is

\begin{equation}
dist_{E}(\mathcal{A}_{m},\mathcal{A}_{0}) \rightarrow 0, \ \mbox{as} \ m \rightarrow 0^{+}.
\end{equation}
\end{theorem}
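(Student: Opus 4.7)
The plan is to invoke the standard Hale--Raugel framework for upper semicontinuity of global attractors under parameter perturbations. For this I must verify two ingredients: (i) uniform-in-$m$ boundedness of the family $\{\mathcal{A}_m\}_{m \in [0,m_0]}$ in a space that embeds compactly into $E$, and (ii) continuous dependence of the semigroup $S_m(t)$ on $m$, uniformly on bounded subsets of $E$ and on compact time intervals. Granted these, the inclusion $\mathcal{A}_m \subset \overline{\bigcup_{s\geq t_0} S_m(s) B}$ together with invariance of $\mathcal{A}_0$ forces $\mathrm{dist}_E(\mathcal{A}_m,\mathcal{A}_0) \to 0$ by a contradiction/subsequence argument.

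For ingredient (i) I would collect the estimates already in hand. Lemma \ref{lem:lemba} furnishes bounds on $u,v$ in $H^1(\Omega)$ that are independent of $m$, and the uniform-in-$m$ gradient estimate \eqref{eq:naa} proven in appendix \ref{app3} supplies the corresponding bound for $r$. Combined with the $H^2$ bounds \eqref{eq:h2ea}, whose proof for $r$ rests on \eqref{eq:rl2n}, \eqref{eq:nrn} and uses only the $m$-independent quantities on the right-hand side, there exists a bounded set $\mathcal{B}\subset X$ such that $\mathcal{A}_m \subset \mathcal{B}$ for every $m \in [0,m_0]$. Since $X \hookrightarrow E$ compactly, $\mathcal{B}$ is precompact in $E$, which is precisely the ambient compactness needed for the semicontinuity argument.

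For ingredient (ii), fix $(u_0,v_0,r_0)$ in the bounded absorbing set $\mathcal{B}$ and let $(u_m,v_m,r_m)$ and $(u_0,v_0,r_0)(t)$ denote the solutions emanating from the same datum with parameters $m$ and $0$. Set $U=u_m-u_0$, $V=v_m-v_0$, $R=r_m-r_0$ and subtract the systems. The only explicit $m$-dependence appears in the $R$-equation, where the discrepancy in reaction terms equals
\begin{equation}
-m\,r_m\!\left(c-\frac{w_4 r_m}{v_m+D_3}\right) + \text{(Lipschitz terms in } U,V,R\text{)}.
\end{equation}
Testing the $U,V,R$ equations against themselves in $L^2(\Omega)$, the ratio-dependent nonlinearities $uv/(u+v)$ and $vr/(v+r)$ have partial derivatives uniformly bounded on the positive cone, so via \eqref{eq:liea} each difference is Lipschitz in $(U,V,R)$ with a constant depending only on the common $L^\infty$ bound. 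The polynomial terms in the $r$-equation are handled the same way using the uniform $L^\infty$ bound on $r_m, r_0$. This yields a differential inequality
\begin{equation}
\tfrac{d}{dt}\|(U,V,R)\|_H^2 + \alpha\|\nabla(U,V,R)\|_H^2 \leq C\|(U,V,R)\|_H^2 + C\,m^2,
\end{equation}
where $C$ depends on $\mathcal{B}$ and parameters but not on $m$. A Gronwall step gives $\|(U,V,R)(t)\|_H \leq C(T)\,m$ on $[0,T]$; a parallel estimate at the $H^1$ level, obtained by testing against $(-\Delta U,-\Delta V,-\Delta R)$ and exploiting the uniform $H^2$ bounds to control the cross terms, upgrades convergence to the $E$ norm.

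The principal obstacle is the $H^1$ continuous-dependence step: the ratio functions $uv/(u+v)$ and $vr/(v+r)$ differentiate into rational expressions that are delicate near the boundary of the positive cone, and the cubic reaction in the $r$-equation complicates Gronwall control. Uniform positivity is not available, but the bound $|\partial_u(uv/(u+v))|+|\partial_v(uv/(u+v))| \leq 2$ on $\{u,v\geq 0\}$ together with the $L^\infty$ control from \eqref{eq:liea} furnishes the Lipschitz constants needed. Once (i) and (ii) are in place, upper semicontinuity follows by the standard route: any sequence $x_{m_k}\in \mathcal{A}_{m_k}$ with $m_k\to 0$ satisfies $x_{m_k} = S_{m_k}(t_k) y_{m_k}$ for arbitrarily large $t_k$ and $y_{m_k}\in \mathcal{A}_{m_k}\subset \mathcal{B}$; by (i) we extract a subsequence converging in $E$, and by (ii) combined with the attraction property of $\mathcal{A}_0$ the limit lies in $\mathcal{A}_0$, establishing $\mathrm{dist}_E(\mathcal{A}_m,\mathcal{A}_0)\to 0$.
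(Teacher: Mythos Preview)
Your proposal is correct and follows the same high-level architecture as the paper---uniform-in-$m$ bounds on $\bigcup_m \mathcal{A}_m$ plus continuous dependence of $S_m(t)$ on $m$---but the technical execution differs in two places. For continuous dependence in $E$, you run energy estimates at the $L^2$ level and then separately at the $H^1$ level (testing against $-\Delta$), relying on the uniform $L^\infty$ and $H^2$ bounds to make the nonlinearities Lipschitz; the paper instead writes the difference $w(t)=S_m(t)g_0-S_0(t)g_0$ via the mild-solution formula and exploits the analytic-semigroup smoothing estimate $\|e^{A(t-\sigma)}\|_{\mathcal{L}(H,E)}\le N(t-\sigma)^{-1/2}$ together with the Gronwall--Henry inequality to obtain $E$-convergence in one stroke, never having to differentiate the ratio nonlinearities. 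For the conclusion, you use a Hale--Raugel subsequence/compactness argument based on the precompactness of $\bigcup_m\mathcal{A}_m$ in $E$ via the $X\hookrightarrow E$ embedding; the paper argues directly with $\epsilon$-neighborhoods: choose $t_\epsilon$ so that $S_0(t_\epsilon)\mathcal{U}\subset\mathcal{N}_E(\mathcal{A}_0,\epsilon/2)$, then use the uniform convergence $\sup_{g_0\in\mathcal{U}}\|S_m(t_\epsilon)g_0-S_0(t_\epsilon)g_0\|_E\to 0$ and invariance $\mathcal{A}_m=S_m(t_\epsilon)\mathcal{A}_m$ to place $\mathcal{A}_m\subset\mathcal{N}_E(\mathcal{A}_0,\epsilon)$. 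The paper's route is slicker because the semigroup does the $H\to E$ lifting for free, sidestepping precisely the ``delicate'' $H^1$ step you flag; your route is more elementary (no singular Gronwall kernel) but costs the extra estimate.
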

\begin{proof}
We know that for $\epsilon > 0$, the global attractor $\mathcal{A}_{0}$ for the reaction diffusion equation described via \eqref{eq:x1}-\eqref{eq:x3} attracts $\mathcal{U}$. This follows via theorems \ref{thm:gaus1}, \ref{thm:gaus12} and  \ref{thm:gaus13}. Thus there exists a finite time $t_{\epsilon} > 0$, such that

\begin{equation}
S_{0}(t_{\epsilon})\mathcal{U} \subset \mathcal{N}_{E}(\mathcal{A}_{0},\frac{\epsilon}{2}).
\end{equation}

Here, $\mathcal{N}_{E}(\mathcal{A}_{0},\frac{\epsilon}{2})$ is the $\frac{\epsilon}{2}$ ball of $\mathcal{A}_{0}$ in the space $E$.
Now we also know by the uniform convergence theorem, theorem \ref{thm:gaum} that there exists a $m_{\epsilon} \in (0,1]$ such that

\begin{equation}
\sup_{g_{0} \in \mathcal{U}} ||S_{m}(t_{\epsilon})g_{0} - S_{0}(t_{\epsilon})g_{0}||_{E} \leq \frac{\epsilon}{2} , \ \mbox{for} \ \mbox{any} \ m \in m_{\epsilon}.
\end{equation}

Since we know $\mathcal{A}_{m}$ is invariant we have

\begin{equation}
\mathcal{A}_{m} = S_{m}(t_{\epsilon})\mathcal{A}_{m} \subset S_{m}(t_{\epsilon})\mathcal{U} \subset \mathcal{N}_{E}(S_{0}(t_{\epsilon})\mathcal{U},\frac{\epsilon}{2})\subset \mathcal{N}_{E}(\mathcal{A}_{0},\epsilon).
\end{equation}

Thus one obtains the upper semicontinuity,

\begin{equation}
dist_{E}(\mathcal{A}_{m},\mathcal{A}_{0}) \rightarrow 0, \ \mbox{as} \ m \rightarrow 0^{+}.
\end{equation}
 This proves the theorem.

\end{proof}

\subsection{Computing the decay rate with respect to the Allee threshold parameter $m$}

In the previous subsection \ref{alleeThreshold}, the role of the Allee threshold parameter $m$ on the long time dynamics of system \eqref{eq:x1}-\eqref{eq:x3} is explored. Theorem \ref{thm:gaus1sc}
shows that the global attractors $\mathcal{A}_{m}$ converge to $\mathcal{A}_{0}$ in the $H^{1}(\Omega)$ norm, as $m$ decreases to zero. It is also seen how changing $m$ effects the Turing dynamics in subsection \ref{turing}.
Theorem \ref{thm:gaus1sc} is a purely theoretical result. It gives no information as to the exact decay rate in terms of the parameter $m$. In general, there are no results in the literature to estimate this decay rate. We now aim to examine the role of
$m$ with respect to convergence, as $m$ decreases to zero as discussed
earlier. We aim to explicitly find the decay rate of $\mathcal{A}_{m}$ to a target attractor $\mathcal{A}_{0}$ (that is when $m=0$). We choose Turing patterns as the state of interest. In particular the numerical scheme discussed in \ref{turing} was employed, and the Turing patterns at an extended time are examined. The goal is to
gather numerical evidence as to the dependence on $m$ with respect to
the convergence rate,
\begin{eqnarray}
  \label{eq:mBounds}
  \| U_m - U_0 \|_{H^1(\Omega)} & < & C m^\gamma,
\end{eqnarray}
where $U_0=(u_, v_0, r_0)$ is the solution for $m=0$, $U_m=(u_m,v_m,r_m)$ is the solution for any
given value of $m$, and both $C$ and $\gamma$ are constants.

The numerical approximation for the case $m=0$ was established at the
scaled time, $t=10,000$. The approximations, $u_m(x,t)$, $v_m(x,t)$,
and $r_m(x,t)$, were then established for a range of values of $m$
with 121 equally spaced values from 0 to 0.0035. The approximation for
each value of $m$ was established up to $t=10,000$, and it was
determined that for each value of $m$ in this range a fixed spatial
pattern was found. The $H^1(\Omega)$ errors for the numerical approximation
were then calculated with respect to the $m=0$ case,
\begin{eqnarray*}
  \| u_m(\cdot,10,000) - u_0(\cdot,10,000)\|_{H^1(\Omega)}, \\
  \| v_m(\cdot,10,000) - v_0(\cdot,10,000)\|_{H^1(\Omega)}, \\
  \| r_m(\cdot,10,000) - r_0(\cdot,10,000)\|_{H^1(\Omega)}.
\end{eqnarray*}

\begin{figure}[!htp]
	\begin{center}
	\includegraphics[scale=0.4]{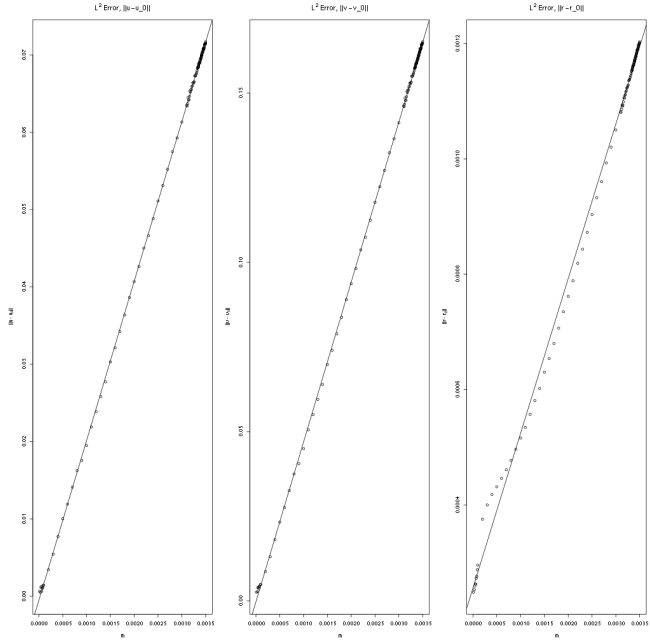}
	\end{center}
    \caption{The $H^1(\Omega)$ errors for the differences
      $\|u_m(\cdot,10,000)-u_0(\cdot,10,000)\|$,
      $\|v_m(\cdot,10,000)-v_0(\cdot,10,000)\|$, and
      $\|r_m(\cdot,10,000)-r_0(\cdot,10,000)\|$ are shown for a range
      of values of $m$. The solid line is the linear least squares
      regression line.  }
	\label{fig:l2Error}
\end{figure}

\begin{figure}[!htp]
	\begin{center}
	\includegraphics[scale=0.4]{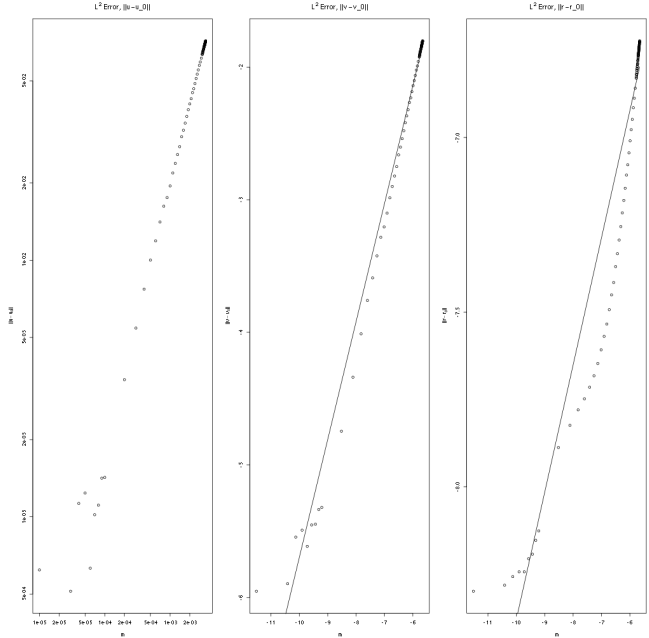}
	\end{center}
    \caption{The $H^1(\Omega)$ errors for the differences
      $\|u_m(\cdot,10,000)-u_0(\cdot,10,000)\|$,
      $\|v_m(\cdot,10,000)-v_0(\cdot,10,000)\|$, and
      $\|r_m(\cdot,10,000)-r_0(\cdot,10,000)\|$ are shown for a range
      of values of $m$ but are plotted in a log log format. The solid
      line is the linear least squares regression line for the log-log
      data.  }
	\label{fig:l2ErrorMLog}
\end{figure}

The errors are shown in Figure \ref{fig:l2Error} and Figure
\ref{fig:l2ErrorMLog}. The first figure, Figure \ref{fig:l2Error},
shows the raw errors, and the second figure, Figure
\ref{fig:l2ErrorMLog}, shows the errors as a log-log plot. A linear
least squares fit for the data was also calculated, and the best fit
straight line is shown on both plots.  In both cases the close linear
fit for larger values of $m$ contribute to strong, positive
correlations in the data. However, for smaller values $m$ the log-log
data moves away from a straight line. For the smaller values of $m$
the errors in the numerical approximation are of the same order as the
error in the $H^1(\Omega)$ norm, and the data no longer follows the same
trend.

Examining the errors for $u_m$, the correlation for the raw data is
0.99995, and the data reveals a strong, positive linear relationship.
In this scenario, it is assumed that the value of $\gamma$ in equation
\eqref{eq:mBounds} is one. The approximation for the slope of the raw
data gives an estimate of 20.6 with a 95\% confidence interval between
20.6215 and 20.6206. The 95\% confidence interval for the intercept is
between -5.1E-5 and -9.0E-4. Due to the large number of samples, the
error in the confidence interval is very small, but the estimate for
the parameter is close to zero.

Examining the errors for the log of the errors correlation is 0.993
for the log-log data. The approximation for the slope of the log-log
data gives an estimate of 1.003 with a 95\% confidence interval
between 0.72 and 1.28. The 95\% confidence interval for the intercept
is between 2.7 and 3.3. Note that the value of one is within the
confidence interval making it difficult to claim that the relationship
is not linear. The estimate for the slope, in fact, is very close to
one indicating that the value of $\gamma$ in equation
\eqref{eq:mBounds} is close to one.

\section{Overexploitation Phenomenon}

Overexploitation refers to the phenomenon where excessive harvesting of a species can result in its extinction. This finds applications in conservation, biodiversity, cascade control and so on. Although there is a fair amount of literature on this in two species models \cite{shi11}, it is much less studied in three species models. 
To begin, we state the following theorem

\begin{theorem}[Middle predator and Allee effect mediated overexploitation]
\label{thm:ox}
Consider $(u,v,r)$ that are solutions to the diffusive three species food chain model described via \eqref{eq:x1}-\eqref{eq:x3}. If $w_{1} > a_{2}+1+w_{3}$, then for any given initial prey density $u_{0}$ there exists a threshold $M_{1}=\frac{1}{\alpha}||u_{0}||_{p}$, s.t if $M_{1} <||v_{0}||_{p}$ and $||r_{0}||_{\infty} < \min(m,\frac{cD_{3}}{w_{4}})$, then $(u,v,r) \rightarrow (0,0,0)$ uniformly for $x \in \bar{\Omega}$ as $t \rightarrow \infty$.
\end{theorem}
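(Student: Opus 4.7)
My plan is a three-stage cascade that mirrors the physical picture: first, the Allee threshold drives the top predator $r$ to zero; second, with the top-down pressure on $v$ essentially gone and the initial $L^p$ dominance of $v$ over $u$ propagated in time, the middle predator overexploits the prey so that $u \to 0$; third, once its only food is exhausted, $v$ starves via the linear term $-a_2 v$. Uniform convergence on $\bar\Omega$ will follow from the $L^\infty$ and $H^2$ bounds \eqref{eq:liea}, \eqref{eq:h2ea} together with standard parabolic smoothing.

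Stage one is a pure comparison argument. Because $\|r_0\|_\infty < \min(m, cD_3/w_4)$, the constant $\bar R := \|r_0\|_\infty$ is a spatially and temporally constant super-solution of \eqref{eq:x3}: at $r = \bar R$ one has $\bar R - m < 0$ and $c - w_4 \bar R/(v+D_3) \geq c - w_4\bar R/D_3 > 0$, so the reaction is non-positive. The parabolic comparison principle then gives $r(x,t) \leq \bar R$ for all $t \geq 0$, which substituted back into the reaction yields the pointwise dissipative inequality $r_t - d_3 \Delta r \leq -\beta r$ with $\beta := (m-\bar R)(c - w_4\bar R/D_3) > 0$. A further application of the maximum principle delivers the explicit decay $\|r(\cdot,t)\|_\infty \leq e^{-\beta t}\|r_0\|_\infty \to 0$ uniformly on $\bar\Omega$.

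Stage two is the technical heart, and is where I expect the main obstacle. Testing \eqref{eq:x1} with $u^{p-1}$ and \eqref{eq:x2} with $v^{p-1}$, integrating over $\Omega$, and using the identity $v/(u+v) = 1 - u/(u+v)$, I would obtain
\begin{align*}
\frac{1}{p}\frac{d}{dt}\|u\|_p^p &\leq (1-w_1)\|u\|_p^p + w_1 \int_\Omega \frac{u^{p+1}}{u+v}\,dx, \\
\frac{1}{p}\frac{d}{dt}\|v\|_p^p &\geq -(a_2+w_3)\|v\|_p^p,
\end{align*}
after dropping the non-positive diffusion and $-\int u^{p+1}$ terms and using $r/(v+r) \leq 1$ together with non-negativity of the $w_2$ source. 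The strategy is to choose $\alpha$ in function of the slack $w_1 - (1+a_2+w_3) > 0$ and propagate the strict comparison $\|u(t)\|_p < \alpha \|v(t)\|_p$ from $t=0$; this forces the cross term $\int u^{p+1}/(u+v)\,dx$ to stay strictly below $\|u\|_p^p$, and hence produces exponential decay of $\|u\|_p$. The obstacle lives precisely here: the ratio response $uv/(u+v)$ resists pointwise comparison of $u$ and $v$, so the cross term must be split over the safe region $\{v \geq \alpha u\}$ (on which $u/(u+v) \leq 1/(1+\alpha)$ provides the main gain) and its complement (whose contribution must be absorbed into the linear decay rate via a H\"{o}lder/Chebyshev argument using the positive slack). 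Once $\|u\|_p \to 0$, interpolation against the uniform $H^2(\Omega)$ bound \eqref{eq:h2ea} upgrades this to $\|u\|_\infty \to 0$, after which stage three is routine: the source $w_2 uv/(u+v) \leq w_2 u$ vanishes uniformly in $x$, so a Duhamel/Gronwall argument applied to $v_t \leq d_2 \Delta v - a_2 v + o(1)$ delivers $\|v\|_\infty \to 0$.
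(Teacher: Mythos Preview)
Your stage one is correct and, in fact, cleaner than the paper's: the paper postpones the extinction of $r$ until after $v\to 0$, whereas you observe that the Allee hypothesis $\|r_0\|_\infty < \min(m,cD_3/w_4)$ forces $r(r-m)(c-w_4 r/(v+D_3))\leq -\beta r$ uniformly in $v\geq 0$, so $r\to 0$ immediately by comparison. The reordering is harmless.

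The gap is in stage two. Your displayed lower bound $\frac{1}{p}\frac{d}{dt}\|v\|_p^p \geq -(a_2+w_3)\|v\|_p^p$ does not follow from testing: multiplying \eqref{eq:x2} by $v^{p-1}$ and integrating, the Laplacian contributes $-d_2(p-1)\int_\Omega v^{p-2}|\nabla v|^2 \leq 0$ on the right-hand side, and you cannot discard a non-positive term when seeking a \emph{lower} bound. The paper bypasses this by using the maximum principle rather than an energy estimate: since $-a_2 v + w_2\frac{uv}{u+v} - w_3\frac{vr}{v+r} \geq -(a_2+w_3)v$ pointwise, $v$ is a supersolution of $\tilde v_t = d_2\Delta\tilde v - (a_2+w_3)\tilde v$, and comparison with the spatially constant subsolution yields the pointwise lower bound $v(x,t)\geq (\inf_\Omega v_0)\,e^{-(a_2+w_3)t}$.

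More importantly, the paper avoids your cross-term obstacle altogether by running the contradiction at the level of the \emph{pointwise} ratio $u/v$ rather than the $L^p$ norm ratio. One fixes $\alpha$ by $\frac{w_1}{1+\alpha}=1+a_2+w_3$ (possible exactly because $w_1>1+a_2+w_3$) and assumes, toward a contradiction, that $u/v<\alpha$ on $[0,t_1)$ with equality first attained at $t_1$. On that interval the pointwise bound $\frac{u}{u+v}=\frac{1}{1+v/u}<\frac{1}{1+1/\alpha}$ feeds directly into \eqref{eq:x1} to give $u_t \leq d_1\Delta u + u - \frac{w_1}{1+\alpha}u = d_1\Delta u -(a_2+w_3)u$, so by the maximum principle $u$ decays at the \emph{same} exponential rate $a_2+w_3$ that bounds $v$ from below. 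The ratio is therefore non-increasing on $[0,t_1]$, contradicting the definition of $t_1$; no set-splitting, H\"older, or Chebyshev argument is required. Your $L^p$ route with the $\{v\geq\alpha u\}$ decomposition would have to control $\int_{\{v<\alpha u\}}u^p$ with only the slack in the linear rate, and it is not clear this closes.
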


\textbf{proof:}
We begin by looking at the equation for $u$, and following the ideas in \cite{K98}

\begin{equation}
\label{eq:x1x0}
\frac{\partial u}{\partial t}= d_1\Delta u + u-u^{2}-w_{1}\frac{uv}{u+v} = d_1\Delta u + u-u^{2}-w_{1}\frac{u}{\frac{u}{v}+1}.
\end{equation}

We will claim that for all $t$, $||\frac{u}{v}||_{\infty} < \alpha$, and $\lim_{t \rightarrow \infty}||u||_{p} \rightarrow 0$, $\forall p$, if $\frac{||u_{0}||_{p}}{||v_{0}||_{p}} < \alpha$. If not there exists a first time $t_{1}$ s.t.,
$\frac{||u||_{p}}{||v||_{p}} < \alpha$ on $t \in [0,t_{1}]$ and $\frac{||u(t_{1})||_{p}}{||v(t_{1})||_{p}} = \alpha$.

Due to $w_{1} > a_{2}+1 + w_{3}$, there exists an $\alpha$ s.t $\frac{w_{1}}{1+\alpha} = 1 + a_{2}+w_{3}$, thus 

\begin{eqnarray}
\label{eq:x1x0}
 \frac{\partial u}{\partial t}= d_1\Delta u + u-u^{2}-w_{1}\frac{u}{\frac{u}{v}+1} &&  \leq  d_1\Delta u + u-w_{1}\frac{u}{\alpha+1}, \nonumber \\
&& = d_1\Delta u + u-u -a_{2}u -w_{3}u, \nonumber \\
&& = d_1\Delta u -(a_{2}+w_{3})u.
\end{eqnarray}

Thus $u$ satisfies

\begin{equation}
||u||_{p} \leq ||u_{0}||_{p} e^{-(a_{2}+w_{3})t}.
\end{equation}

This easily follows via multiplying \eqref{eq:x1} by $|u|^{p-1}$, and integrating by parts.
We also notice that the $\tilde{v}$ solving the following equation, with $\tilde{v}_{0} = \sup v_{0}(x)$, is a super solution to \eqref{eq:x2}

\begin{equation}
\label{eq:x2xo}
\frac{\partial \tilde{v}}{ \partial t} =  -(a_{2}+w_{3})\tilde{v}.
\end{equation}

Multiplying through by $|v|^{p-1}$, and integrating by parts yields

\begin{equation}
\label{eq:x2xo1}
||v||_{p} \geq ||v_{0}||_{p} e^{-(a_{2}+w_{3})t}.
\end{equation}

This implies

\begin{equation}
\label{eq:x2xo1}
\frac{||u||_{p}}{||v||_{p}}  \leq \frac{||u_{0}||_{p}}{||v_{0}||_{p}}   < \alpha. 
\end{equation}

This implies a contradiction. Thus $\lim_{t \rightarrow \infty}||u||_{p} \rightarrow 0$, which implies uniform convergence of a subsequence, say $u_{n_{j}}$ to 0 (where $u_{n}$ is a Galerkin truncation of $u$ and standard theory is adopted \cite{SY02}), and by uniqueness of solutions,
$u \rightarrow 0$ uniformly, as $t \rightarrow \infty$. This reduces the $v$ equation to

\begin{equation}
\label{eq:x2ne}
\frac{\partial v}{ \partial t}= d_2 \Delta v - a_{2}v -w_{3}\left(\frac{vr}{v+r}\right) \leq d_2 \Delta v - a_{2}v,
\end{equation}

and it is easy to see that $v \rightarrow 0$ uniformly, as $t \rightarrow \infty$. 

Now the $r$ equation reduces to 

\begin{equation}
\frac{\partial r}{ \partial t} = d_3 \Delta r + r\big({{r-m}}\big)\bigg(c-\frac{w_4r} {D_3}\bigg),
\end{equation}

and if $||r_{0}||_{\infty} < \min(m,\frac{cD_{3}}{w_{4}})$ standard analysis yields 

$r \rightarrow 0$ uniformly. 

This proves the theorem.

$\square$

\begin{remark}
Note, in the three species food chain model \eqref{eq:x1}-\eqref{eq:x3},  overexploitation has to be middle predator mediated, and also depends on the allele threshold.
Clearly if  $||r_{0}||_{p} > \max(m,\frac{cD_{3}}{w_{4}})$, or $\frac{cD_{3}}{w_{4}} < ||r_{0}||_{p}  < m$, or $\frac{cD_{3}}{w_{4}} > ||r_{0}||_{p}  > m$, $r$ cannot go extinct. 
\end{remark}

Also, if the attack rate of the top predator $r$ is large enough, it could cause $v$ to go extinct, before $u$ does. Once $v$ goes extinct $u \rightarrow 1$, again prohibiting overexploitation. 

\begin{theorem}[Top predator mediated overexploitation avoidance]
\label{thm:ox1}
Consider $(u,v,r)$ that are solutions to the diffusive three species food chain model described via \eqref{eq:x1}-\eqref{eq:x3}. If $w_{3} > w_{2}+1+w_{1}$, then for any given initial prey density $u_{0}$ there exists a threshold $M_{2}=\frac{1}{\alpha_{1}}||r_{0}||_{p}$, s.t if $\max \left(M_{2},m, \frac{cD_{3}}{w_{4}}\right) < ||r_{0}||_{p}$ then $(u,v,r) \rightarrow (1,0,r^{*})$ uniformly as $t \rightarrow \infty$. 
\end{theorem}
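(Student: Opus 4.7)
The plan is to mimic the structure of the proof of Theorem~\ref{thm:ox}, now with the roles of the bottom/middle and middle/top trophic ratios swapped: we will show that the top predator $r$ overexploits the specialist $v$. Since $w_{3} > w_{2}+1+w_{1}$, choose $\alpha_{1} > 0$ with $\frac{w_{3}}{1+\alpha_{1}} = 1+w_{2}+w_{1}$. By analogy with the definition of $M_{1}$ in Theorem~\ref{thm:ox}, the threshold $M_{2}$ is chosen so that the hypothesis $\max(M_{2},m,cD_{3}/w_{4}) < ||r_{0}||_{p}$ forces $||v_{0}||_{p}/||r_{0}||_{p} < \alpha_{1}$ while simultaneously placing $r_{0}$ above the Allee threshold $m$ and above the reduced carrying capacity $cD_{3}/w_{4}$. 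I would then argue by contradiction, supposing there is a first time $t_{1}$ at which $||v||_{p}/||r||_{p} = \alpha_{1}$ while the ratio is strictly less on $[0,t_{1})$, and derive a super-solution for $v$ together with a sub-solution for $r$ that forces the ratio strictly down, producing the contradiction.

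\textbf{Key estimates.} On $[0,t_{1}]$, since $v/r \le \alpha_{1}$ one has $\frac{vr}{v+r} \ge \frac{v}{1+\alpha_{1}}$, and together with the elementary bound $\frac{uv}{u+v} \le v$ this gives
\[
\frac{\partial v}{\partial t} \le d_{2}\Delta v - a_{2}v + w_{2}v - \frac{w_{3}}{1+\alpha_{1}}v = d_{2}\Delta v - (a_{2}+1+w_{1})v.
\]
Testing against $|v|^{p-1}$ and using the Neumann boundary condition yields $||v||_{p} \le ||v_{0}||_{p}\, e^{-(a_{2}+1+w_{1})t}$. For $r$, the inequality $w_{4}r/(v+D_{3}) \le w_{4}r/D_{3}$ (from $v \ge 0$) together with $r > m$ shows that $r$ is a super-solution of the scalar Allee--logistic problem
\[
\tilde r_{t} = d_{3}\Delta \tilde r + \tilde r(\tilde r - m)\left(c - \frac{w_{4}\tilde r}{D_{3}}\right), \qquad \tilde r\big|_{t=0} = r_{0}.
\]
A standard invariant-region argument for this scalar equation, using the hypothesis that $r_{0}$ lies above $m$ and $cD_{3}/w_{4}$, traps $\tilde r(x,t)$ in a set bounded below by $\min(\inf_{x} r_{0}, cD_{3}/w_{4}) > m$, and so $||r||_{p} \ge ||\tilde r||_{p} \ge C^{*} > 0$ uniformly in time.

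\textbf{Contradiction and conclusion.} Combining the last two displays gives $||v||_{p}/||r||_{p} \le ||v_{0}||_{p}\, e^{-(a_{2}+1+w_{1})t}/C^{*}$, which is strictly less than $\alpha_{1}$ at $t=t_{1}$, contradicting the defining property of $t_{1}$. Hence the ratio stays below $\alpha_{1}$ for all $t$, so $||v||_{p} \to 0$ exponentially for every $p$, and via Galerkin truncation together with uniqueness of classical solutions (exactly as in the proof of Theorem~\ref{thm:ox}) one upgrades this to $v \to 0$ uniformly on $\bar\Omega$. Passing to this limit, the $u$ equation reduces asymptotically to the scalar logistic $u_{t} = d_{1}\Delta u + u - u^{2}$, so $u \to 1$ uniformly; the $r$ equation reduces to $r_{t} = d_{3}\Delta r + r(r-m)(c - w_{4}r/D_{3})$, whose unique positive stable equilibrium is $r^{*} = cD_{3}/w_{4}$, and since $r$ remains above $m$ throughout it converges uniformly to $r^{*}$. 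The main technical obstacle is the pointwise preservation of $r(x,t) > m$: the Allee nonlinearity makes the scalar $r$--problem bistable, and any dip below $m$ would destroy the sub-solution argument. Interpreting the $L^{p}$ lower bound on $r_{0}$ in the hypothesis as a pointwise lower bound that keeps $r_{0}$ in the basin of attraction of $r^{*}$, and then using the fact that the level set $\{r = m\}$ is repelling from above for the scalar equation, is the delicate step that closes the argument.
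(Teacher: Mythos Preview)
Your approach is essentially the paper's: the same choice of $\alpha_{1}$ via $w_{3}/(1+\alpha_{1})=1+w_{2}+w_{1}$, the same first-hitting-time contradiction on the ratio $\|v\|_{p}/\|r\|_{p}$, the same super-solution bound for $v$, and the same reduction of the $u$ and $r$ equations once $v\to 0$. Your treatment of the lower bound on $r$ via an invariant-region/sub-solution argument is in fact more explicit than the paper's, which simply asserts that under the hypothesis on $r_{0}$ one has $r\to m$ or $r\to cD_{3}/w_{4}$ and hence $\|r\|_{p}$ stays bounded below.

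There is, however, one step the paper carries out that you have omitted. Before anything else, the paper records the crude lower bound
\[
\frac{\partial u}{\partial t} \ge -u^{2}-w_{1}\frac{uv}{u+v} \ge -(1+w_{1})u \quad\Longrightarrow\quad \|u\|_{p}\ge \|u_{0}\|_{p}\,e^{-(1+w_{1})t},
\]
and then explicitly compares decay rates: $v$ decays at rate $a_{2}+1+w_{1}$ while $u$ decays no faster than $1+w_{1}$, so $v$ vanishes strictly faster and $u$ is still positive when $v$ becomes negligible. Only then is it legitimate to say ``the $u$ equation reduces to the logistic and $u\to 1$''. Your line ``Passing to this limit, the $u$ equation reduces asymptotically to the scalar logistic'' skips this comparison; without it you have not ruled out that $u$ has already been driven to $0$ (the absorbing state of the logistic) by the time $v$ disappears. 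Inserting the paper's two-line decay-rate comparison closes this gap and your argument is then complete along the same lines as the paper.
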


\textbf{proof}:
We first derive a lower bound on $u$. Trivially we have $||u||_{\infty} \leq 1$. Now 

\begin{equation}
\frac{\partial u}{ \partial t} \geq  - u^2 -w_{1}\frac{vu}{v+u} \geq -u-w_{1}u.
\end{equation}

This yields 

\begin{equation}
||u||_{p} \geq ||u_{0}||_{p} e^{-(1+w_{1})t}.
\end{equation}

Next we will claim that for all $t$, $||\frac{v}{r}||_{\infty} < \alpha_{1}$, and $\lim_{t \rightarrow \infty}||v||_{p} \rightarrow 0$, $\forall p$, if $\frac{||v_{0}||_{p}}{||r_{0}||_{p}} < \alpha_{1}$. If not there exists a first time $t_{2}$ s.t.,
$\frac{||v||_{p}}{||r||_{p}} < \alpha_{1}$ on $t \in [0,t_{2}]$ and $\frac{||u(t_{2})||_{p}}{||v(t_{2})||_{p}} = \alpha_{1}$.

Due to the choice of $w_{3} > w_{2}+1 + w_{1}$, there exists an $\alpha_{1}$ s.t $\frac{w_{3}}{1+\alpha_{1}} = 1 + w_{2}+w_{1}$, thus 

\begin{eqnarray}
\label{eq:x1x0}
  \frac{\partial v}{\partial t} && = d_2\Delta v  - a_{2}v + w_{2}\frac{vu}{v+u} - w_{3}\frac{v}{\frac{v}{r}+1},  \nonumber \\
&& \leq d_2\Delta v  - a_{2}v + w_{2}v - \frac{w_{3}}{1+\alpha_{1}}v, \nonumber \\
&& = d_2\Delta v  - a_{2}v + w_{2}v - (1 + w_{2}+w_{1})v. \nonumber \\
\end{eqnarray}

Thus $v$ satisfies

\begin{equation}
||v||_{p} \leq ||v_{0}||_{p} e^{-(a_{2}+1+w_{3})t}.
\end{equation}

By the restriction on $r_{0}$, we see that trivially $r \rightarrow m$ or $r \rightarrow \frac{cD_{3}}{w_{3}}$.
This implies

\begin{equation}
\label{eq:x2xo1}
\frac{||v||_{p}}{||r||_{p}}  \leq \frac{||v_{0}||_{p}}{||r_{0}||_{p}}   < \alpha_{1}.
\end{equation}

This implies a contradiction. Thus $\lim_{t \rightarrow \infty}||v||_{p} \rightarrow 0$, which implies uniform convergence to 0 of a subsequence, say $v_{n_{j}}$ (where $v_{n}$ is a Galerkin truncation of $v$ and standard theory is adopted \cite{SY02}), and by uniqueness $v \rightarrow 0$ uniformly as $t \rightarrow \infty$. Looking at the decay rate for $v$, we see it converges to 0 faster than $u$, so $u > 0$, when $v \rightarrow 0$ uniformly, and so then the $u$ equation reduces to

\begin{equation}
\frac{\partial u}{ \partial t}= d_1 \Delta u + u - u^2,
\end{equation}

and we know via standard theory that now
 $u \rightarrow 1$ uniformly. 

This proves the theorem.
$\square$

\begin{remark}
This also happens if $\frac{cD_{3}}{w_{4}} < ||r_{0}||_{\infty} < m$, or $\frac{cD_{3}}{w_{4}} > ||r_{0}||_{\infty} > m$.
\end{remark}

The following lemma describes persistence in the top predator.
\begin{lemma}[Persistence in top predator]
\label{lem:ox2}
Consider $(u,v,r)$ that are solutions to the diffusive three species food chain model described via \eqref{eq:x1}-\eqref{eq:x3}. If $\min(m,\frac{cD_{3}}{w_{4}}) < ||r_{0}||_{\infty}$, then $r \rightarrow \max(m,\frac{cD_{3}}{w_{4}})$ uniformly as $t \rightarrow \infty$, hence persists.
\end{lemma}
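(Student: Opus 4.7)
The plan is to prove the lemma by a sub-/super-solution comparison argument applied directly to the $r$-equation \eqref{eq:x3}, exploiting the bistable structure of the reaction term $f(r,v) = r(r-m)(c - w_4 r/(v+D_3))$. For each fixed $v \geq 0$ this scalar reaction has three non-negative zeros, $0$, $m$, and $c(v+D_3)/w_4$; of these, $0$ and $\max\{m, c(v+D_3)/w_4\}$ are stable equilibria of the associated ODE, while $\min\{m, c(v+D_3)/w_4\}$ plays the role of the unstable separatrix. I interpret the hypothesis $\|r_0\|_\infty > \min(m, cD_3/w_4)$ pointwise (i.e.\ $\inf_x r_0(x) > \min(m, cD_3/w_4)$), which is the natural persistence condition for bistable dynamics.

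I would first handle the case $m < cD_3/w_4$, so $\max(m, cD_3/w_4) = cD_3/w_4$. The key observation is that $v\geq 0$ gives $1/(v+D_3) \leq 1/D_3$, so for $r$ in the band $(m, cD_3/w_4)$,
\[
r(r-m)\!\left(c - \tfrac{w_4 r}{v+D_3}\right) \;\geq\; r(r-m)\!\left(c - \tfrac{w_4 r}{D_3}\right) \;>\; 0.
\]
Accordingly I construct a spatially homogeneous sub-solution $\underline{r}(t)$ solving the ODE $\underline{r}' = \underline{r}(\underline{r}-m)(c - w_4\underline{r}/D_3)$ with $\underline{r}(0) = \inf_x r_0(x)$; an elementary phase-plane analysis yields $\underline{r}(t) \nearrow cD_3/w_4$ as $t \to \infty$. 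The parabolic comparison principle, valid in view of the uniform $L^{\infty}$ bounds \eqref{eq:liea}, then gives $r(x,t) \geq \underline{r}(t)$, hence $\liminf_{t\to\infty} \inf_x r(x,t) \geq cD_3/w_4$, which already establishes the persistence claim. A matching super-solution $\bar{r}(t)$ is built from the ODE $\bar{r}' = \bar{r}(\bar{r}-m)(c - w_4 \bar{r}/(V+D_3))$ with $\bar{r}(0) = \|r_0\|_\infty$, where $V$ denotes the uniform $L^{\infty}$-bound on $v$ from \eqref{eq:liea}. The case $m > cD_3/w_4$ is symmetric: the relevant invariant band is $(cD_3/w_4, m)$, the middle factor $c - w_4 r/(v+D_3)$ changes sign, and now $m$ is the stable upper root toward which the sandwiched ODE trajectories converge.

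The main obstacle is closing the gap between the lower limit $cD_3/w_4$ coming from the $v=0$ sub-solution and the upper limit $c(V+D_3)/w_4$ coming from the $v=V$ super-solution; this prevents the crude argument from yielding the sharp uniform limit $\max(m, cD_3/w_4)$ on its own. To sharpen it I would either invoke additional long-time information on $v$---for instance, in the regime relevant to Theorem \ref{thm:ox1} one has $v \to 0$ uniformly, so the super-solution can be refined on each tail interval $[T,\infty)$ using a $v$-bound that shrinks to zero---or iterate a two-sided bootstrap that exploits monotonicity of the one-parameter family of ODEs in $v$ to tighten both bounds simultaneously. The persistence conclusion, however, follows from the sub-solution step alone and requires no further control on $v$.
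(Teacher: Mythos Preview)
Your approach is essentially identical to the paper's: compare $r$ with the spatially homogeneous ODE sub-solution $\tilde r$ solving $\tilde r' = \tilde r(\tilde r - m)\bigl(c - w_4 \tilde r/D_3\bigr)$ with $\tilde r(0) = \min_x r_0(x)$, using $v \geq 0 \Rightarrow 1/(v+D_3) \leq 1/D_3$ to obtain the required differential inequality, and conclude $\liminf_{t\to\infty}\min_{\bar\Omega} r(x,t) \geq \max(m, cD_3/w_4)$. The paper does exactly this in one stroke, without the case split.

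Where you go beyond the paper is in also building the companion super-solution and then honestly identifying the gap: the sub-solution pins the floor at $cD_3/w_4$ (in the case $m<cD_3/w_4$) while the super-solution only gives the ceiling $c(V+D_3)/w_4$, and these do not coincide without further input on $v$. The paper's own proof in fact stops at the lower bound and records only $\liminf_{t\to\infty}\min_{\bar\Omega} r \geq \max(m, cD_3/w_4)$; it does not close the argument to the sharper uniform convergence $r \to \max(m, cD_3/w_4)$ asserted in the lemma statement. Your proposed fixes---feeding in $v\to 0$ from the context of Theorem~\ref{thm:ox1}, or iterating the two-sided bounds---are precisely what is needed, and indeed the lemma is invoked by the paper only in that $v\to 0$ regime. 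So your proposal not only recovers the paper's argument but also diagnoses a genuine slackness between the lemma's statement and its proof.
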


\textbf{proof:}
Note $c- \frac{w_{4}r}{v+D_{3}} >  c- \frac{w_{4}r}{D_{3}} $, due to positivity of $r,v$. Thus
we can compare the solution of \eqref{eq:x3} to the solution of the ODE

\begin{equation}
\label{eq:x2nn}
\frac{d\tilde{r}}{dt} = \tilde{r}(\tilde{r}-m)\bigg(c- \frac{w_{4}\tilde{r}}{v+D_{3}}\bigg),
\end{equation}

with $\tilde{r}_{0}= \min(r_{0}(x)) > \min(m,\frac{cD_{3}}{w_{4}}) $. Clearly $\tilde{r} \rightarrow \max(m,\frac{cD_{3}}{w_{4}})$ as $t \rightarrow \infty$, and since $r$ solving \eqref{eq:x3} is a supersolution to $\tilde{r}$ solving \eqref{eq:x2nn}, we have $\liminf_{t \rightarrow \infty} \min_{\bar{\Omega}} (r(x,t) \geq \max(m,\frac{cD_{3}}{w_{4}})$.

$\square$ 
\begin{remark}
In the event that there is no Allee effect, or $m=0$, $r \rightarrow \frac{cD_{3}}{w_{4}}$ uniformly as $t \rightarrow \infty$, hence always persists, independent of all other parameters or initial data $(u_{0}, v_{0}, r_{0})$. Thus in a three species system where the top predator is a generalist, and can switch its favorite food source, true overexploitation can only take place if an Allee effect is in place, and not otherwise.
\end{remark}

\begin{remark}
All of the over exploitation theorems were verified numerically, for various ranges of parameter values.
\end{remark}

\section{Turing Instability}
\label{turing}
In this section we establish the condition needed to ensure a positive interior steady state for model system (\ref{eq:x1}-\ref{eq:x3}). We focus on deriving the condition necessary and sufficient for Turing instability to occur as a result of the introduction of diffusion. This phenomena is referred to as \emph{diffusion driven instability}, which was first introduced by Alan Turing\cite{Turing52}. We first establish the steady state for model system (\ref{eq:x1od}-\ref{eq:x3od})
\subsection{Steady States}
Before we can begin the analysis of Turing instability, we have to establish the conditions needed to ensure a positive interior steady state for the ODE version of model system \eqref{eq:x1}-\eqref{eq:x3}, which is given as 

\begin{align}
&\frac{du}{dt}=  u-u^{2}-w_{1}\frac{uv}{u+v},\label{eq:x1od} \\ 
&\frac{dv}{ dt}=  -a_{2}v+w_{2}\frac{uv}{u+v}-w_{3}\left(\frac{vr}{v+r}\right),\label{eq:x2od}\\ 
&\frac{dr}{ dt} =  r\big({{r-m}}\big)\bigg(c-\frac{w_4r} {v+D_3}\bigg). \label{eq:x3od}
\end{align}

Also all parameters associated with model system (\ref{eq:x1od}-\ref{eq:x3od}) are assumed to be positive constants and have the usual biological meaning. \\

The model system \eqref{eq:x1od}-\eqref{eq:x3od} has the following non-negative steady states 
\begin{enumerate}
\item Total Extinction of the three species: $E_0(0,0,0)$.
\item Predator free steady state: $E_1(1,0,0)$.
\item No primary food source for predator steady state: $E_2(0,0,m)$ and $E_3(0,0,\frac{cD_3} {w_4})$.
\item Top Predator free steady state: $E_4(\tilde {u},\tilde {v},0)$ where $\tilde {u}=\frac{w_2-w_1w_2+a_2w_1} {w_2},\tilde {v}=\frac{(w_2-a_2)\tilde{u}}{a_2}$ if $w_2>w_1(w_2-a_2),w_2>a_2$.
\item Middle Predator free steady state: $E_5(1,0,\frac{cD_3} {w_4})$ and $E_6(1,0,m)$.
\item Coexistence of Species: $E_7(\tilde {\tilde {u}},\tilde {\tilde {v}},m)$ where $\tilde {\tilde {v}}= \frac{(1-\tilde {\tilde {u}})\tilde {\tilde {u}}}{(w_1+\tilde {\tilde {u}}-1)}, (1-{w_1})<\tilde {\tilde {u}} <1, w_1<1$ and $\tilde {\tilde {u}}$ is the real positive root of following equation:\\ 
\begin{equation}\nu_1u^3+\nu_2u^2+\nu_3u+\nu_4=0,
\label{eq:4}
\end{equation} where $ \nu_1=w_2,\, \nu_2 = -[a_2w_1+w_2(-w_1+(2+m))], \nu_3= [a_2w_1-w_1w_2+w_2+mw_1w_3-m(-a_2w_1+2w_2(w_1-1))], \, \nu_4=m(w_1-1)[w_2+w_1(w_3-(w_2-a_2))].$ 
\item Coexistence of Species with interior equilibrium population: $E_8(u^{*},v^{*},r^{*})$ is the solution of following equations:
\begin{subequations} \label{eq:5}
\begin{eqnarray}
1-u-\frac{w_1v} {u+v}=0, \\
 -a_2+\frac{w_2u} {u+ v}-\frac{w_3r} { r+v}=0,\label{eq:5ab}\\
 c-\frac{w_4 r} {v+D_3}=0.\label{eq:5ac}
\end{eqnarray}
\end{subequations}
\end{enumerate}
From \eqref{eq:5} we have,\\
\begin{eqnarray}
\label{eq:6}
v=\frac{(1-u)u} {w_1+u-1},\\
\label{eq:1.11}
r=\frac{c(v+D_3)} {w_4}.
\end{eqnarray}
Putting the values of $v$ and $r$ from \eqref{eq:6} and \eqref{eq:1.11} into \eqref{eq:5ab}, we get
\begin{eqnarray}
\label{eq:1.12}
\alpha_1 u^3-\alpha_2 u^2-\alpha_3 u-\alpha_4=0,
\end{eqnarray}
where\\
$\alpha_1=w_2(w_4+c)$,\\ $\alpha_2= [w_4(a_2w_1-w_1w_2+2w_2)+c(w_2(2+ D_3)+w_1(w_3+a_2-w_2))]$,\\$\alpha_3=[-w_2(w_4+c)-cD_3w_1(w_3+(a_2-2w_2))-(w_1(a_2-w_2)w_4+c(2D_3w_2+w_1(w_3+a_2-w_2)))]$,\\
$\alpha_4=-cD_3(w_1-1)[w_2+w_1(w_3+a_2-w_2)]$.\\
$u^*$ is the real positive root of \eqref{eq:1.12}. Hence interior equilibrium point $ E_8(u^*,v^*,r^*)$ exists if $({1} -{w_1})<u<{1}, w_1<1$. Knowing the value of $u^*$  we can find the values of $v^*$ and $r^*$ from \eqref{eq:6} and \eqref{eq:1.11} respectively.\\

\begin{remark}
From a realistic biological point of view, we are only interested in the dynamical behavior of model system \eqref{eq:x1}-\eqref{eq:x3} around the positive interior equilibrium point $E_8(u^*,v^*,r^*)$. This ensures the coexistence of all three species. 
\end{remark}

\begin{remark}
Equilibrium points $E_{0},E_{1}, E_{2}, E_{3}$ exist, even though an indeterminate form appears in \eqref{eq:x2od}, \eqref{eq:x3od} due to the ratio dependent functional response. This has been rigorosly proven in \cite{K98}.
\end{remark}

\subsection{Turing conditions}
In this subsection we derive sufficient conditions for Turing instability to occur in \eqref{eq:x1}-\eqref{eq:x3}  where the positive interior equilibrium point $E_8(u^*,v^*,r^*)$ is stable in the absence of diffusion and unstable due to the addition of diffusion, under a small perturbation to $E_8(u^*,v^*,r^*)$. 
This is achieved by first linearizing model \eqref{eq:x1}-\eqref{eq:x3} about the homogenous steady state, by introducing both space and time-dependent fluctuations around $E_8(u^*,v^*,r^*)$. This is given as 
\begin{subequations}\label{eq:7}
\begin{align}
u=u^* +  \hat{u}(\xi,t),\\
v=v^* + \hat{v}(\xi,t),\\
r=r^*  +  \hat{r}(\xi,t),
\end{align}
\end{subequations}
where $| \hat{u}(\xi,t)|\ll u^*$, $| \hat{v}(\xi,t)|\ll v^*$ and  $| \hat{r}(\xi,t)|\ll r^*$. Conventionally we choose 
\[
\left[ {\begin{array}{cc}
\hat{u}(\xi,t)  \\
\hat{v}(\xi,t) \\
\hat{r}(\xi,t)
\end{array} } \right]
=
\left[ {\begin{array}{cc}
\epsilon_1  \\
\epsilon_2 \\
\epsilon_3
\end{array} } \right]
e^{\lambda t + ik\xi},
\]
where  $\epsilon_i$ for $i=1,2,3$ are the corresponding amplitudes, $k$ is the wavenumber, $\lambda$ is the growth rate of perturbation in time $t$ and $\xi$ is the spatial coordinate.
Substituing \eqref{eq:7} into  \eqref{eq:x1}-\eqref{eq:x3} and ignoring higher order terms including nonlinear terms, we obtain the characteristic equation  which  is given as  
\begin{align}\label{eq:1.2.10}
({\bf J} - \lambda{\bf I} - k^2{\bf D})
\left[ {\begin{array}{cc}
\epsilon_1  \\
\epsilon_2 \\
\epsilon_3
\end{array} } \right]=0,
\end{align}
where 
\[
\quad
\bf {D} = 
\left[ {\begin{array}{ccc}
d_1 & 0     & 0  \\
0     & d_2 & 0 \\
0     & 0     & d_3
\end{array} } \right],
\]
$ \bf{J}= \begin{bmatrix}
     u^*\left(-1+\frac{w_1v^*}{(u^*+v^*)^2}\right) &   -\frac{w_1u^{*^2}}{(u^*+v^*)^2} & 0 \\
     \frac{w_2v^{*^2}} {(u^*+v^*)^2}    & v^*\left(-\frac{w_2u^*}{(u^*+v^*)^2}+\frac{w_3r^*}{(v^*+r^*)^2}\right)  & -\frac{w_3v^{*^2}}{(v^*+r^*)^2} \\
        0   & \frac{r^{*^2}(r^*-m)w_4}{(v^*+D_3)^2}  & -\frac{r^*(r^*-m)w_4}{(v^*+D_3)}\\
     \end{bmatrix}
      =\begin{bmatrix}
       J_{11} & J_{12} & J_{13}\\
       J_{21} & J_{22} & J_{23}\\
        J_{31} & J_{32} & J_{33}\\
       \end{bmatrix},
$    \\ 
\\and $\bf{I}$ is a $3\times 3$ identity matrix.\\
For the non-trivial solution of \eqref{eq:1.2.10}, we require that 
\[
\left| 
\begin{array}{ccc}
J_{11}-\lambda -k^2d_1 & J_{12}                            & J_{13}\\
       J_{21}                     & J_{22}-\lambda -k^2d_2 & J_{23}\\
       J_{31}                     & J_{32}                             & J_{33}-\lambda -k^2d_3\\
 \end{array} \right|=0,
\]
which gives a dispersion relation corresponding to $E_8(u^*,v^*,r^*)$. To determine the stability domain associated with $E_8(u^*,v^*,r^*)$, we rewrite the dispersion relation as a cubic polynomial function given as 
\begin{align}\label{eq:1.1.11}
P(\lambda(k^2))=\lambda^3 + \boldsymbol {\mu_2}(k^2)\lambda^2 + \boldsymbol {\mu_1}(k^2)\lambda + \boldsymbol {\mu_0}(k^2),
\end{align}
with coefficients
\begin{align*}
\boldsymbol {\mu_2}(k^2)&=(d_1 + d_2 + d_3)k^2 - (J_{11} + J_{22} + J_{33} ) ,\\
\boldsymbol {\mu_1}(k^2)&=J_{11}J_{33} + J_{11}J_{22} + J_{22}J_{33} - J_{32}J_{23} - J_{12}J_{21}- k^2\big( (d_3 + d_1)J_{22} + (d_2 + d_1)J_{33}  + (d_2 + d_3)J_{11}\big) \\
 &+k^4(d_2d_3 + d_2d_1 + d_1d_3),\\
\boldsymbol {\mu_0}(k^2)&=J_{11} J_{32} J_{23}  - J_{11}J_{22}J_{33} +  J_{12} J_{21} J_{33} + k^2\big(d_1( J_{22} J_{33} - J_{32} J_{23} ) + d_2 J_{11} J_{33} + d_3 ( J_{22} J_{11} - J_{12} J_{21} )\big)\\
& - k^4\big( d_2d_1J_{33} + d_1d_3J_{22} + d_2d_3J_{11}\big) + k^6d_1d_2d_3.
\end{align*}
According to Routh-Hurwitz criterion for stability, $ \mathbb{R}e(\lambda(k^2))<0$ in model \eqref{eq:x1}-\eqref{eq:x3} around equilibrium point $E_8(u^*,v^*,r^*)$ (i.e $E_8$ is stable) if and only if these conditions holds:
\begin{align}\label{eq:1.1.12}
  \boldsymbol {\mu_2}(k^2)>0,\,\boldsymbol {\mu_1}(k^2)>0,\,\boldsymbol {\mu_0}(k^2)>0\quad \text{and}\quad [\boldsymbol {\mu_2}\boldsymbol {\mu_1}-\boldsymbol {\mu_0}](k^2) >0.
\end{align}\\
As violating either of the above conditions implies instability (i.e $\mathbb{R}e(\lambda(k^2))>0$).\\
We now require conditions where an homogenous steady state ($E_8(u^*,v^*,r^*)$) will be stable to small perturbation in the absence of diffusion and unstable in the present of diffusion with certain $k$ vaules. Meaning, we require that around the homogenous steady state $E_8(u^*,v^*,r^*)$ 
\begin{align*}
 \mathbb{R}e(\lambda(k^2>0))>0,\, \text{for some}\, k \, \text{and}\, \mathbb{R}e(\lambda(k^2=0))<0,
\end{align*}
where we consider $k$ to be real and positive even though $k$ can be complex. This behavior is called \emph{diffusion driven instability}. Models that exhibits this sort of behavior in $2$ and $3$ species has been extensively studied in \cite{Gilligan1998}, where several different patterns was observed.\\
In order for homogenous steady state $E_8(u^*,v^*,r^*)$  to be stable( in the absence of diffusion) we need 
\begin{align*}
\boldsymbol {\mu_2}(k^2=0)>0,\,\boldsymbol {\mu_1}(k^2=0)>0,\,\boldsymbol {\mu_0}(k^2=0)>0\quad \text{and}\quad [\boldsymbol {\mu_2}\boldsymbol {\mu_1}-\boldsymbol {\mu_0}](k^2=0) >0,
\end{align*}
where as with diffusion ($k^2>0$) we look for conditions where we can drive the homogenous steady state to be unstable, this can be achieved by studying the coefficient of \eqref{eq:1.1.11}. In order to achieve this we need to reverse at least one of the signs in \eqref{eq:1.1.12}. By which we first study $\boldsymbol {\mu_2}(k^2)$. Irrespective of the value of $k^2$, $\boldsymbol {\mu_2}(k^2)$ will be positive since $J_{11}+J_{22}+J_{33}$ is always less than zero. Therefore we cannot depend on $\boldsymbol {\mu_2}(k^2)$ for diffusion driven instability to occur. Hence for diffusion driven instability to occur in our case, we only depend on the  $2$ conditions which are 
\begin{align}\label{eq:1.1.12a}
\boldsymbol {\mu_0}(k^2)\quad \text{and}\quad [\boldsymbol {\mu_2}\boldsymbol {\mu_1}-\boldsymbol {\mu_0}](k^2).
\end{align}
Both functions are cubic functions of $k^2$, which are generally of the form 
\begin{align*}
G(k^2)=H_H + k^2D_D + (k^2)^2C_C + (k^2)^3B_B,\, \text{with}\, B_B>0,\,\text{and} \, H_H>0.
\end{align*}
The coefficient of $G(k^2)$ are given in Table \ref{tab:coefficientsTable}.
\begin{table}[htb]
\caption{Coefficients of cubic functions $\boldsymbol {\mu_0}(k^2)$ and $[\boldsymbol {\mu_2}\boldsymbol {\mu_1}-\boldsymbol {\mu_0}](k^2)$ used in determining conditions for Turing instability  } 
  \addtolength{\tabcolsep}{-3pt}
  \centering
  {\scriptsize
  \begin{tabular}{@{}llccc@{}}
    \toprule
  Coefficient of $G(k^2)$ & $\boldsymbol {\mu_0}(k^2)$  & $[\boldsymbol {\mu_2}\boldsymbol {\mu_1}-\boldsymbol {\mu_0}](k^2)$ \\ [0.5ex] 
  \hline 
    \multirow{4}{*}{\parbox{1cm}{$H_H$}} & $J_{11}J_{32}J_{23}+ J_{12}J_{21}J_{33}$ & $J_{11}J_{22}J_{33} - (J_{11} + J_{22} + J_{33})(J_{11}J_{22} - J_{12}J_{21} + J_{11}J_{33} + J_{22}J_{33} - J_{23}J_{32})$    \\ 
    &$-J_{11}J_{22}J_{33}$&$ - J_{11}J_{23}J_{32} -J_{12}J_{21}J_{33}$    \\

      \multirow{4}{*}{\parbox{1cm}{$D_D$}}&  & \\&  $d_1( J_{22}J_{33} - J_{32}J_{23} ) + d_2J_{11}J_{33} $ &$d_1( 2J_{11}J_{33} + 2J_{11}J_{22} + 2J_{22}J_{33} + J_{33}J_{33} + J_{22}J_{22} - J_{12}J_{21})$    \\ 
    & $+ d_3( J_{11}J_{22} - J_{12}J_{21} )$ &$+ d_2( 2J_{22}J_{11} + 2J_{22}J_{33} + 2J_{33}J_{11} + J_{11}J_{11} + J_{33}J_{33} - J_{21}J_{12} - J_{23}J_{32})$  \\
        & & $d_3( 2J_{22}J_{11} + 2J_{22}J_{33} + 2J_{33}J_{11} + J_{11}J_{11} + J_{22}J_{22} - J_{23}J_{32})$&\\
        
     \multirow{4}{*}{\parbox{1cm}{$C_C$}} &  &\\&$- d_1d_2J_{33} - d_1d_3J_{22} - d_2d_3J_{11}$ &$-  J_{11}(d_2 + d_3)(2d_1 + d_2 + d_3)-  J_{22}(d_1 + d_3)( d_1 + 2d_2 + d_3)$&    \\ 
    & & $ -  J_{33}(d_1 + d_2)( d_1 + d_2 + 2d_3)$  \\
        
     \multirow{4}{*}{\parbox{1cm}{$B_B$}} &  &\\&$d_1d_2d_3$ &$(d_2 + d_3)( d_1d_1 + d_2d_3 + d_1d_2$&    \\ 
    & & $+ d_1d_3 )$ \\
    \bottomrule
  \end{tabular}}
\label{tab:coefficientsTable} 
\end{table} 
\\To drive either  $\boldsymbol {\mu_0}(k^2)$ or $[\boldsymbol {\mu_2}\boldsymbol {\mu_1}-\boldsymbol {\mu_0}](k^2)$ to negative for some $k$, we  need to find the minimum $k^2$ referred to as the minimum turing point ($k^2_T$) such that $G(k^2=k^2_T)<0$. This minimum turing point occurs when 
$$ {\partial G / \partial (k^2)}=0,$$
this when solved for $k^2$ yields
\begin{align*}
k^2=k^2_T={-C_C + \sqrt{C_C^2 - 3B_BD_D}   \over 3B_B    }.
\end{align*}
To ensures $k^2$ is real and positive such that $ {\partial^2 G / \partial^2 (k^2)}>0$, we require either 
\begin{align}\label{eq:1.1.13}
D_D<0\quad \text{or}\quad C_C<0,
\end{align}
which ensures that 
\begin{align*}
C_C^2 - 3B_BD_D>0.
\end{align*}
Therefore $G(k^2)<0$, if at $k^2=k^2_T$
\begin{align}\label{eq:1.1.14}
G_{min}(k^2)=2C_C^3 - 9D_DC_CB_B - 2(C_C^2- 3D_DB_B)^{3/2} + 27B_BH_H^2<0.
\end{align}
Hence  \eqref{eq:1.1.13}-\eqref{eq:1.1.14} are necessary and sufficient conditions for $E_8(u^*,v^*,r^*)$ to produce diffusion driven instability,  leading to the emergence of patterns. Also to first establish stability when $k=0$, $H_H$  in each case has to be positive. We state this via the following theorem

\begin{theorem}
\label{thm:tur1}
Consider the three species food chain model described via \eqref{eq:x1}-\eqref{eq:x3}, with equilibrium point $E_8(u^*,v^*,r^*)$. Supposing we have a set of parameters such that the following conditions are satisfied,
\begin{enumerate}
\item $H_H>0$ when $k=0$ ,
\item $D_D<0$ or $C_C<0$( and $C_C^2>B_BD_D$),
\item $G_{min}(k^2)=2C_C^3 - 9D_DC_CB_B - 2(C_C^2- 3D_DB_B)^{3/2} + 27B_BH_H^2<0.$
\end{enumerate}

Then Turing instability will always occur in the model.
\end{theorem}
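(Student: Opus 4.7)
The plan is to assemble the dispersion analysis already developed in the subsection into a clean application of the Routh--Hurwitz criterion. After linearizing \eqref{eq:x1}-\eqref{eq:x3} around $E_8(u^*,v^*,r^*)$ (already done in \eqref{eq:7}-\eqref{eq:1.2.10}), the dispersion relation is the cubic $P(\lambda(k^2)) = \lambda^3 + \boldsymbol{\mu_2}(k^2)\lambda^2 + \boldsymbol{\mu_1}(k^2)\lambda + \boldsymbol{\mu_0}(k^2)$. By Routh--Hurwitz, all roots have negative real part iff $\boldsymbol{\mu_2},\ \boldsymbol{\mu_1},\ \boldsymbol{\mu_0}$ and $\boldsymbol{\mu_2}\boldsymbol{\mu_1}-\boldsymbol{\mu_0}$ are all positive, so Turing instability is equivalent to producing some $k^2>0$ at which one of these four quantities is non-positive, while all four are positive at $k^2=0$. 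The $k^2=0$ requirement is precisely $H_H>0$ for each of $\boldsymbol{\mu_0}$ and $\boldsymbol{\mu_2}\boldsymbol{\mu_1}-\boldsymbol{\mu_0}$, which is condition~1; positivity of $\boldsymbol{\mu_2}(0),\boldsymbol{\mu_1}(0)$ follows from the sign structure of $\mathbf{J}$ at $E_8$ already noted in the paper.

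Since $\boldsymbol{\mu_2}(k^2)=(d_1+d_2+d_3)k^2-(J_{11}+J_{22}+J_{33})$ is manifestly positive for all $k^2\ge 0$ (the trace is negative), instability must be driven by either $\boldsymbol{\mu_0}(k^2)$ or $[\boldsymbol{\mu_2}\boldsymbol{\mu_1}-\boldsymbol{\mu_0}](k^2)$, both of which fit the common template $G(s)=H_H+D_D s+C_C s^2 + B_B s^3$ with $B_B,H_H>0$, where $s:=k^2$. The core step is then a one-variable calculus argument on $G$. Setting $G'(s)=D_D+2C_Cs+3B_Bs^2=0$ yields the candidate minimizer
\[
s_T \;=\; \frac{-C_C+\sqrt{C_C^2-3B_BD_D}}{3B_B},
\]
with $G''(s_T)=2\sqrt{C_C^2-3B_BD_D}>0$ confirming it is a strict local minimum. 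Condition~2 is precisely what guarantees $s_T$ is real and strictly positive: if $D_D<0$, then $C_C^2-3B_BD_D>C_C^2\geq 0$, and the numerator is positive regardless of the sign of $C_C$; if $C_C<0$ and $C_C^2>3B_BD_D$, the square root is real and $-C_C>0$ already makes the numerator positive.

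Substituting $s_T$ back into $G$ produces $G_{\min}=2C_C^3-9D_DC_CB_B-2(C_C^2-3D_DB_B)^{3/2}+27B_BH_H^2$, and condition~3 states exactly $G_{\min}<0$. By continuity of $G$, there is then an open interval of wavenumbers on which $G(k^2)<0$. Applied to whichever of $\boldsymbol{\mu_0}$ or $\boldsymbol{\mu_2}\boldsymbol{\mu_1}-\boldsymbol{\mu_0}$ is under consideration, this violates Routh--Hurwitz at those $k^2$, so the dispersion polynomial has at least one root with $\mathrm{Re}(\lambda(k^2))>0$; combined with the ODE stability from condition~1, this is the definition of diffusion-driven instability, proving the theorem. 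The delicate point I anticipate is being careful that $s_T$ above corresponds to the \emph{minimum} rather than the local maximum of the cubic $G$ (the other root of $G'=0$ uses the $-$ sign), which is why verifying $G''(s_T)>0$ explicitly is essential; the rest is an algebraic unpacking of the cubic's discriminant, already laid out in Table~\ref{tab:coefficientsTable}.
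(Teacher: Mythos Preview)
Your proposal is correct and follows essentially the same route as the paper: the theorem is stated as a summary of the dispersion analysis in the preceding subsection, and you have faithfully reconstructed that analysis---linearize, write the cubic $P(\lambda(k^2))$, observe $\boldsymbol{\mu_2}(k^2)>0$ unconditionally, reduce to the template $G(s)=H_H+D_Ds+C_Cs^2+B_Bs^3$ for $\boldsymbol{\mu_0}$ and $\boldsymbol{\mu_2}\boldsymbol{\mu_1}-\boldsymbol{\mu_0}$, locate the minimizer $s_T$, and impose $G_{\min}<0$. Your explicit check that $G''(s_T)>0$ (so $s_T$ is the minimum, not the local maximum) and your case analysis for $s_T>0$ are slightly more careful than the paper's treatment, but the argument is the same.
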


See \cite{Gilligan1998} for the connection between the signs of the coefficients in table \ref{tab:coefficientsTable}  and the associated patterns.

\subsection{Effect of Allee threshold parameter $m$ on Turing Instability}
In this section we investigate the effects of Allee threshold $m$ on Turing patterns. We focus on whether $m$ can induce Turing instabilities. We perform numerical simulation of model system \eqref{eq:x1}-\eqref{eq:x3} and the use of coefficient of dispersion relation, given in Table \ref{tab:coefficientsTable}.\\
A Chebychev collocation scheme is employed for the approximation of
the time varying, one-dimensional
problem\cite{timeDependentSpectralMethods,doi:10.1137/0720063}.

Our one-dimensional numerical simulation employs the zero-flux boundary condition with a spatial domain of size $(256\times 256)$. 
We define the spatial domain to be $X \in (0,L)$, in the nondimensionalistaion in appendix \ref{app1}, where $\quad x={X\pi\over L}$.

The initial condition as defined is a small perturbation around the positive homogenous steady state given as 
\begin{align*}
u=u^{*} + \epsilon_1  cos^2(nx)(x > 0)(x < \pi),\\
v=v^{*}  + \epsilon_2 cos^2(nx)(x > 0)(x < \pi),\\
r=r^{*}  + \epsilon_3  cos^2(nx)(x > 0)(x < \pi),
\end{align*}
where $ \epsilon_i=0.05$ $\forall i$. 

In the two-dimensional case, we numerically solve model system \eqref{eq:x1}-\eqref{eq:x3}  using a finite difference method. A forward difference
scheme is used for the reaction part. A standard five-point explicit finite difference scheme is used for the two-dimensional diffusion terms. Model system \eqref{eq:x1}-\eqref{eq:x3}  is numerically solved in two-dimension over $200\times 200$ mesh points with spatial resolution $\Delta x=\Delta y=0.1$ and time step $\Delta t=0.1$. \\
In our numerical simulation both in the one-dimensional and two-dimensional case, with specific parameter set as show in figure \eqref{fig:turing1}-figure \eqref{fig:turing5}, different types of dynamics are observed as a result of the value of the Allee threshold $m$. \\
From figure \eqref{fig:turing1}-figure \eqref{fig:turing2}, we numerically observe that change in $m$ can result in disappearance of patterns or the appearance of patterns. Specifically from figure \eqref{fig:turing1}-figure \eqref{fig:turing2}, patterns where observed when $m=0.01$ but these patterns change as $m$ gets larger and finally at $m=0.5$, these patterns completely disappear. The critical Allee threshold $m$, for patterns to disappear, depends on the parameter set being used. In some cases, patterns where observed when $m=0$ but as $m\to 0.2$ and beyond, these patterns disappeared.  \\
\begin{figure}[!htp]
	\begin{center}
	\includegraphics[scale=0.17]{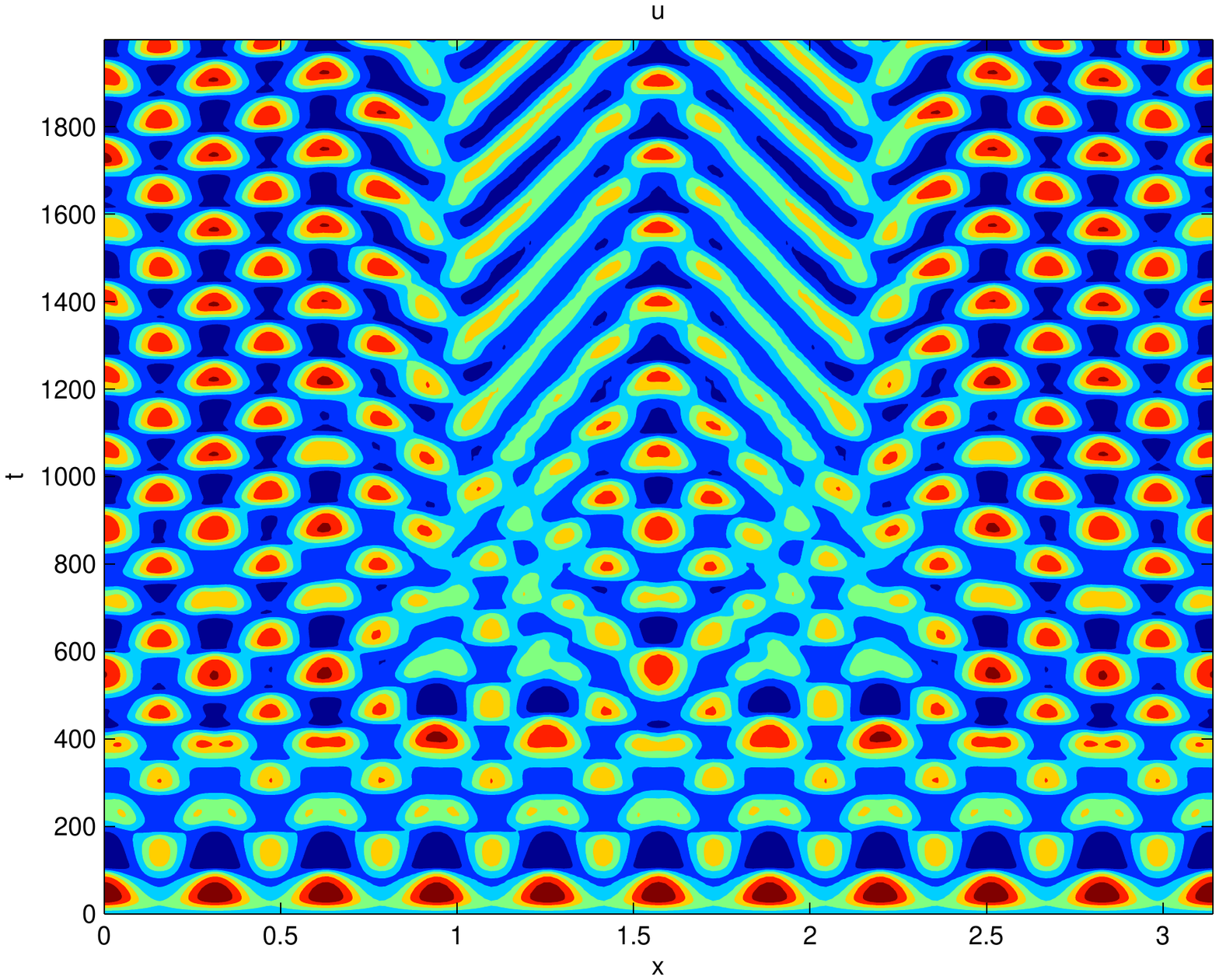}
    \includegraphics[scale=0.17]{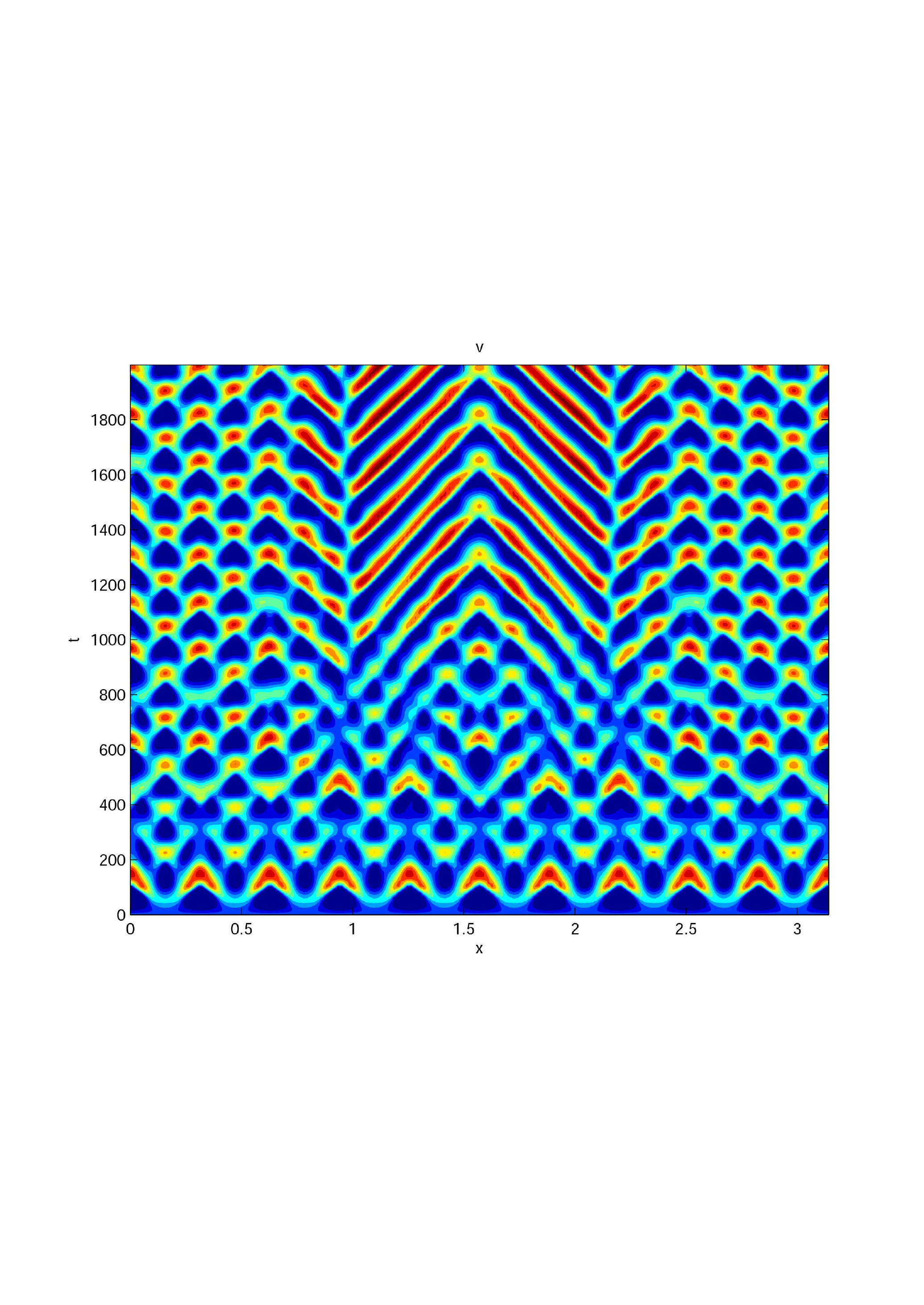}
	\includegraphics[scale=0.17]{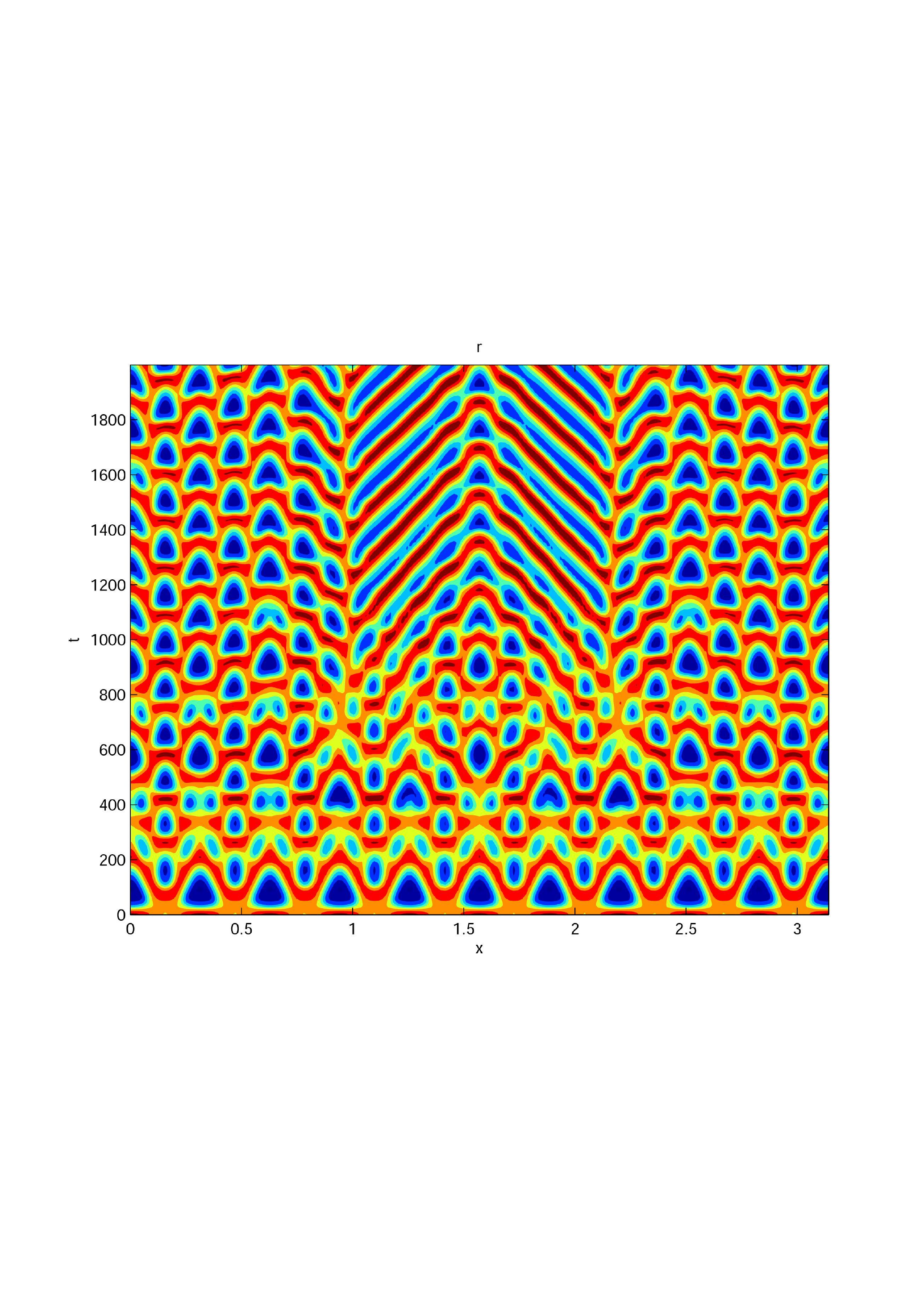}
	\end{center}
		\caption{The densities of the three species are shown as contour plots in the x-t plane (1 dimensional in space). The long-time simulation yields stripes-spots mixture pattern, that are spatio-temporal. The  parameters are: $ m=0.01, w_1 = 0.96, w_2 = 0.52, w_3 = 1.06, w_4 = 0.37, a_2 = 0.014,D_3 = 0.1, c = 0.1, d_1 =10^{-3},  d_2 = 10^{-5}, d_3 =10^{-5}$. }
	\label{fig:turing1}
\end{figure}
\begin{figure}[!htp]
	\begin{center}
	\includegraphics[scale=0.17]{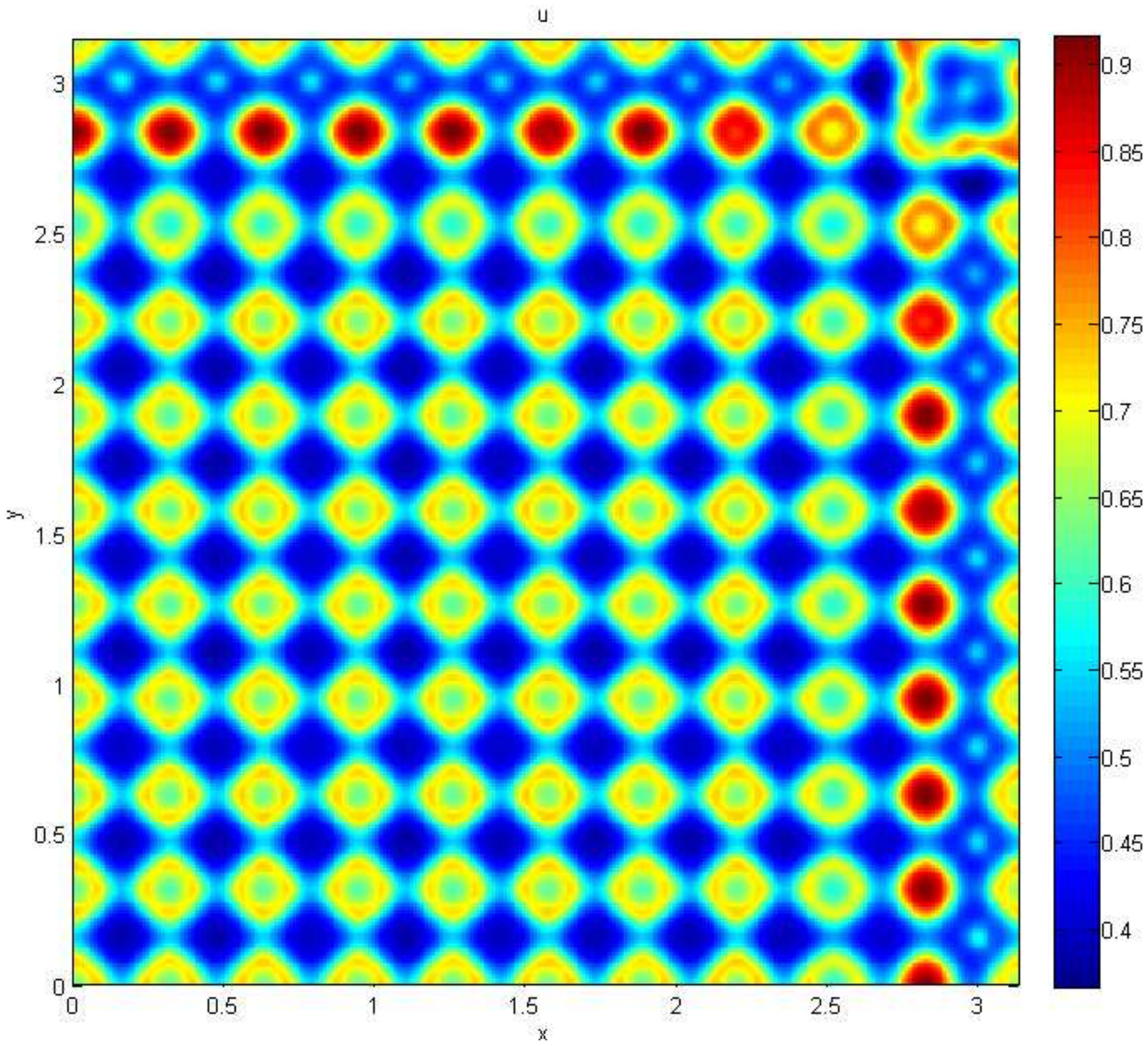}
    \includegraphics[scale=0.17]{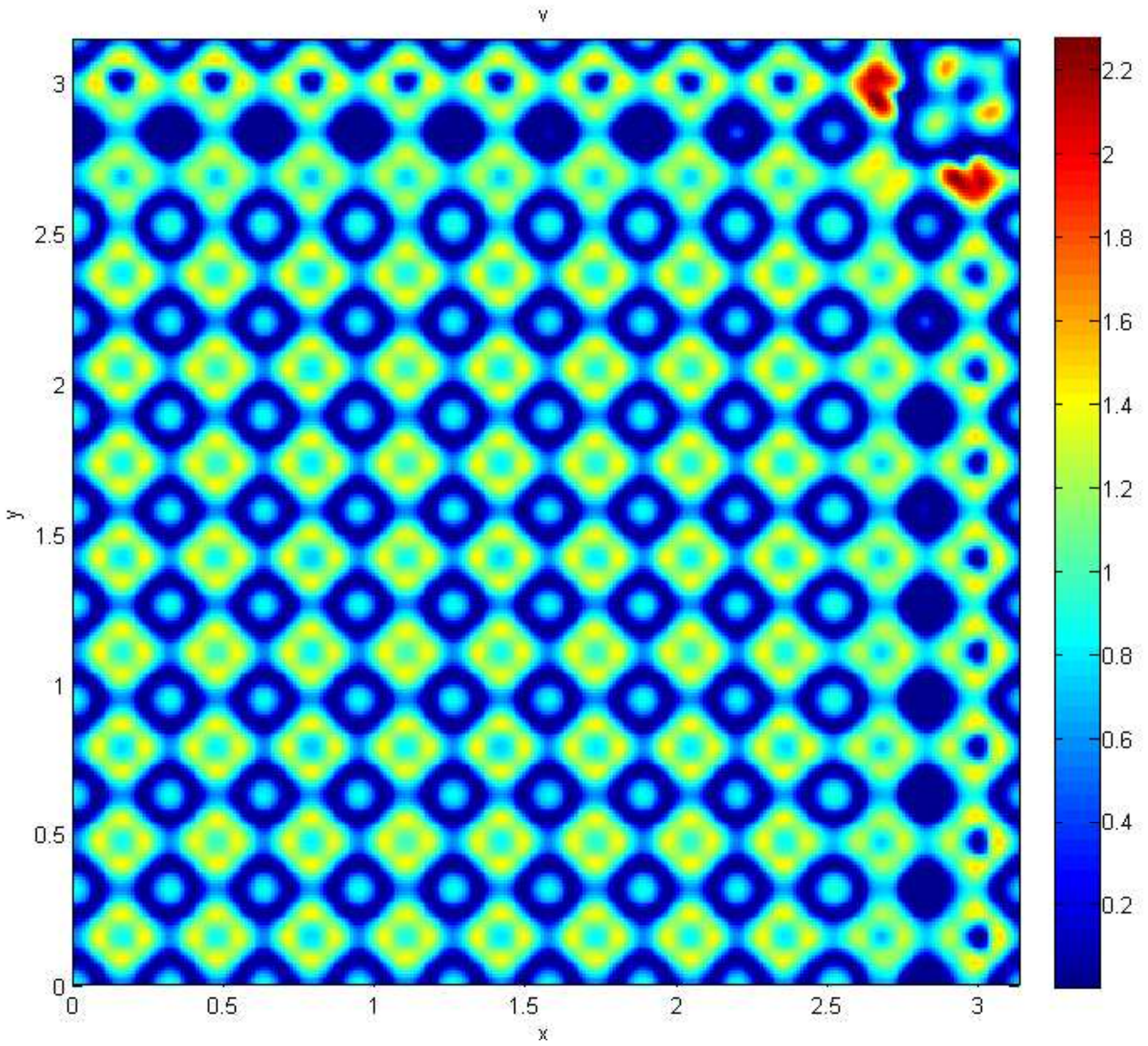}
    \includegraphics[scale=0.17]{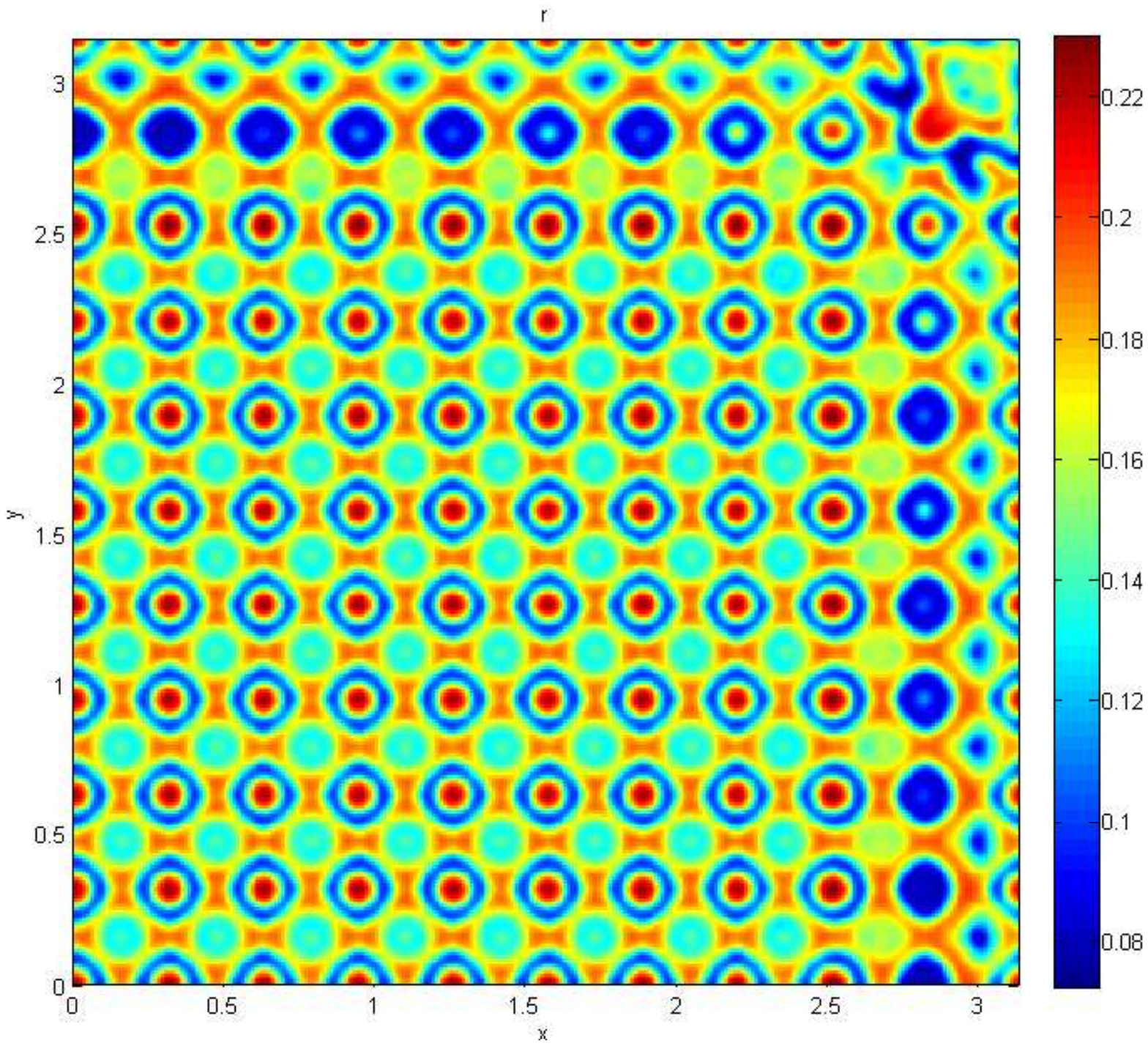}
	\end{center}
		\caption{The densities of the three species are shown as contour plots in the x-y plane (2 dimensional in space). The long-time simulation yields spots pattern. The  parameters are: $ m=0.01, w_1 = 0.96, w_2 = 0.52, w_3 = 1.06, w_4 = 0.37, a_2 = 0.014,D_3 = 0.1, c = 0.1, d_1 =10^{-3},  d_2 = 10^{-5}, d_3 =10^{-5}$. }
	\label{fig:turing2}
\end{figure}
It is also conclusive from figure \eqref{fig:turing3}-figure \eqref{fig:turing5} that Allee threshold has an effect on the type of patterns that do form particularly, as in the case of figure \eqref{fig:turing3} and \eqref{fig:turing5}, changing $m$, can change spatio-temporal patterns to a fixed spatial patterns. This behavior can be observed using the dispersion relation as shown in figure \eqref{fig:Disp1}. 
In choosing $\mu_0(k^2)<0$ for some $k$, which will mostly result in fixed spatial patterns, changing the value of $m$ does not result in a change in the type of patterns formed, but parameters move out of the Turing space. Therefore in our numerical experiments we do not find parameters such that  fixed spatial patterns can change to spatial-temporal patterns, as $m$ increases from zero.\\
\begin{figure}[!htp]
	\begin{center}
	\includegraphics[scale=0.17]{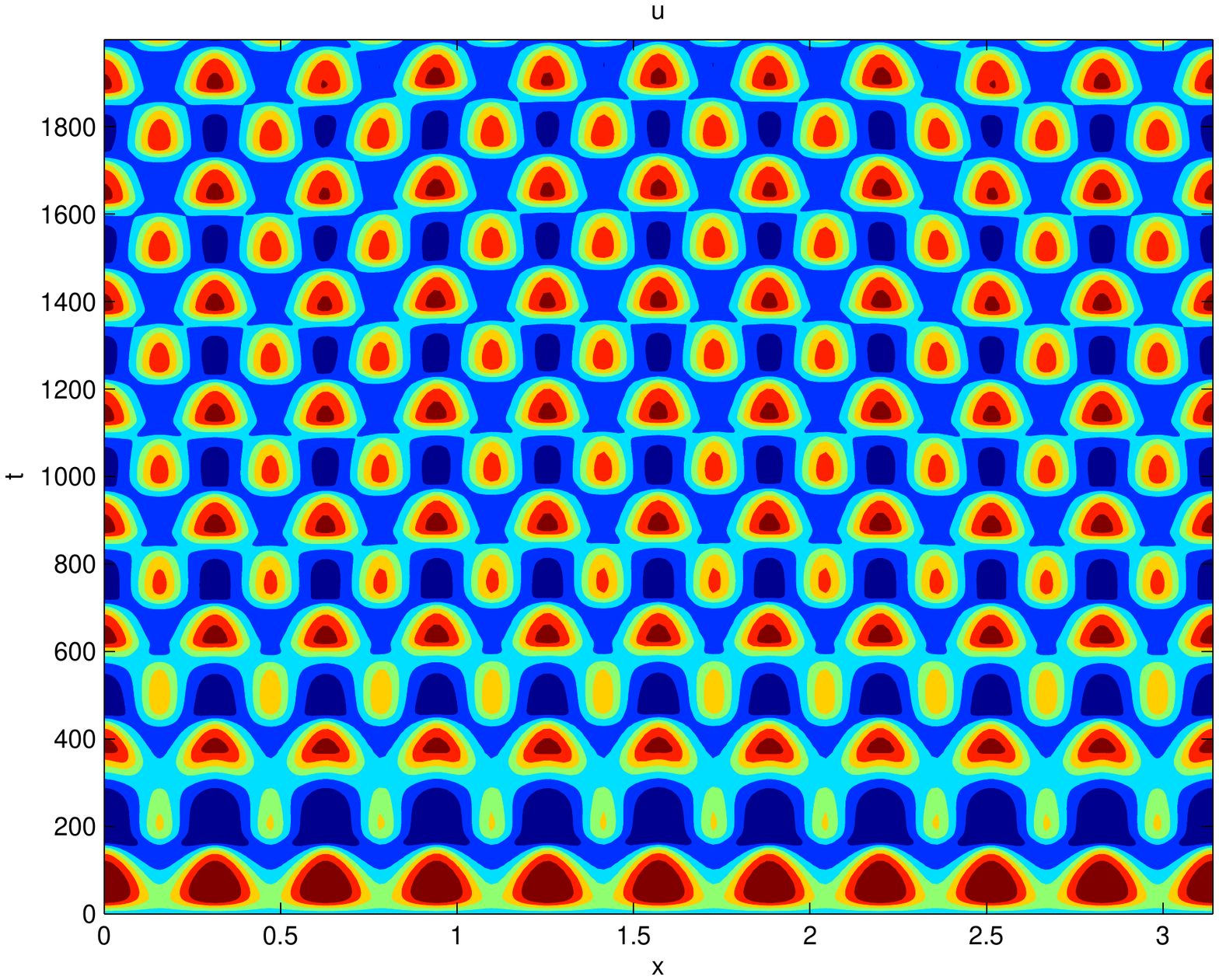}
    \includegraphics[scale=0.17]{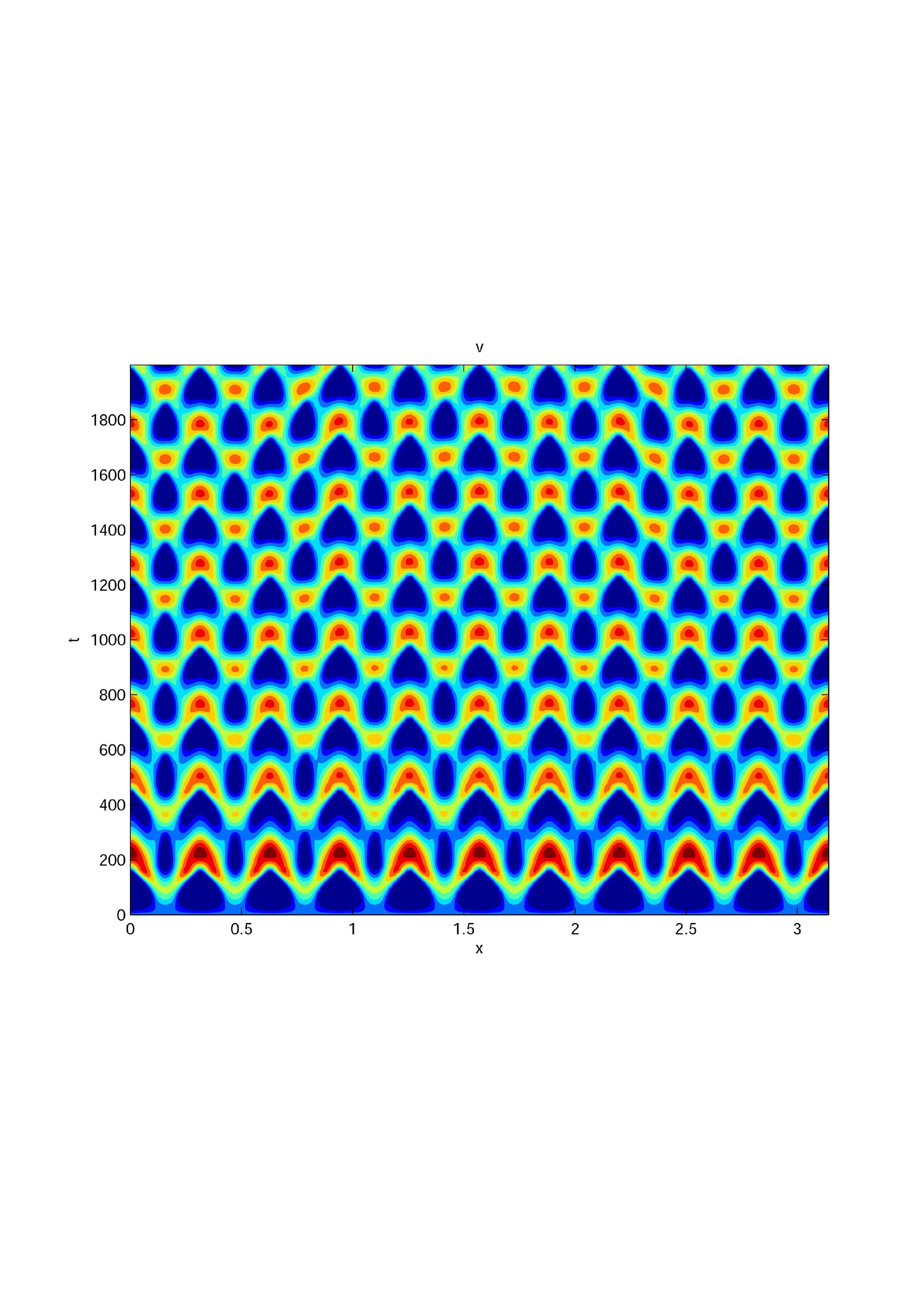}
	\includegraphics[scale=0.17]{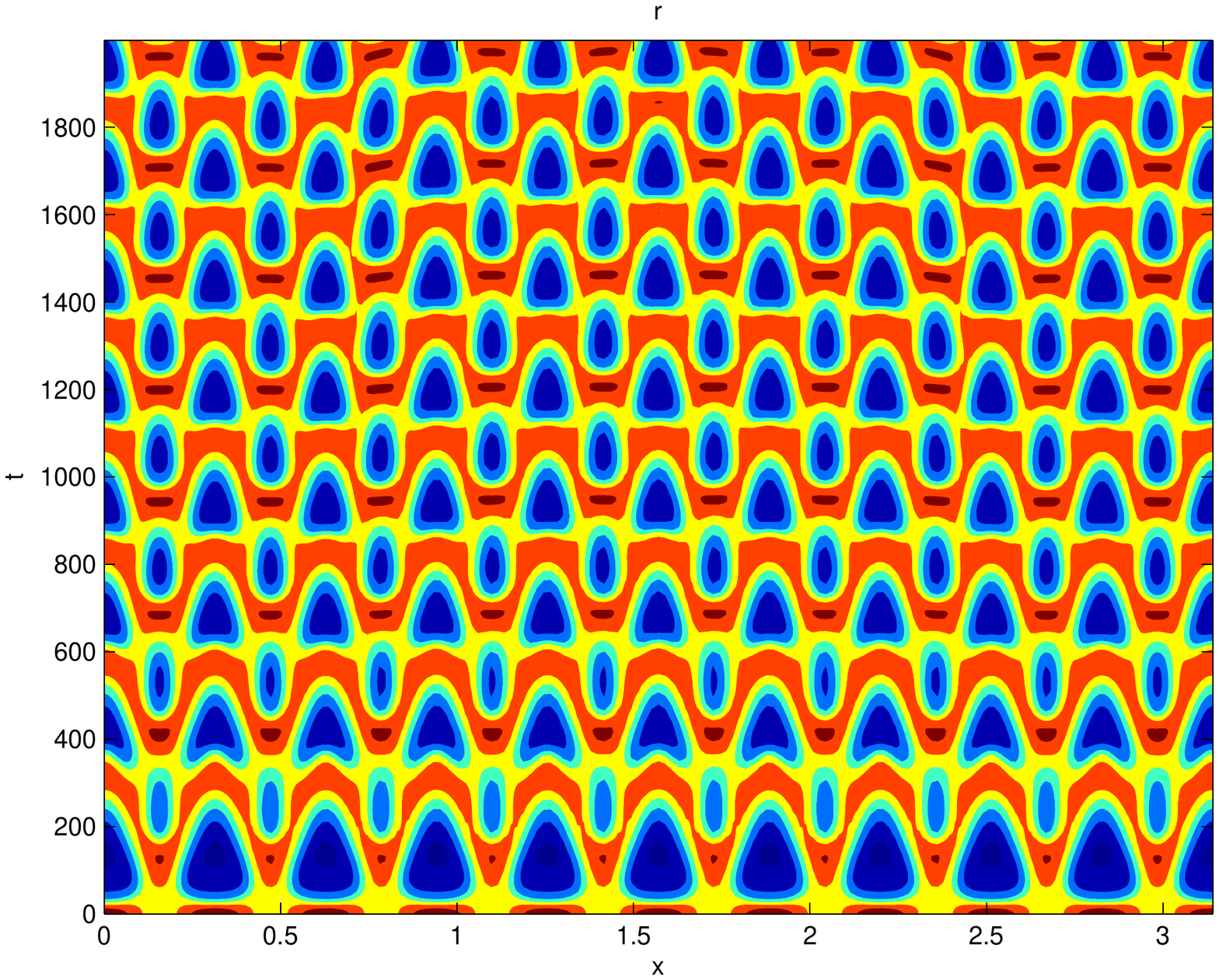}
	\end{center}
		\caption{The densities of the three species are shown as contour plots in the x-t plane (1 dimensional in space). The long-time simulation yields spot and arrow type patterns. The  parameters are: $ m=0, w_1 = 0.95, w_2 = 0.3, w_3 = 0.82, w_4 = 0.53, a_2 = 0.01,D_3 = 0.1, c = 0.1, d_1 =10^{-3},  d_2 = 10^{-5}, d_3 =10^{-5}$}
	\label{fig:turing3}
\end{figure}\\
\begin{figure}[!htp]
	\begin{center}
	\includegraphics[scale=0.17]{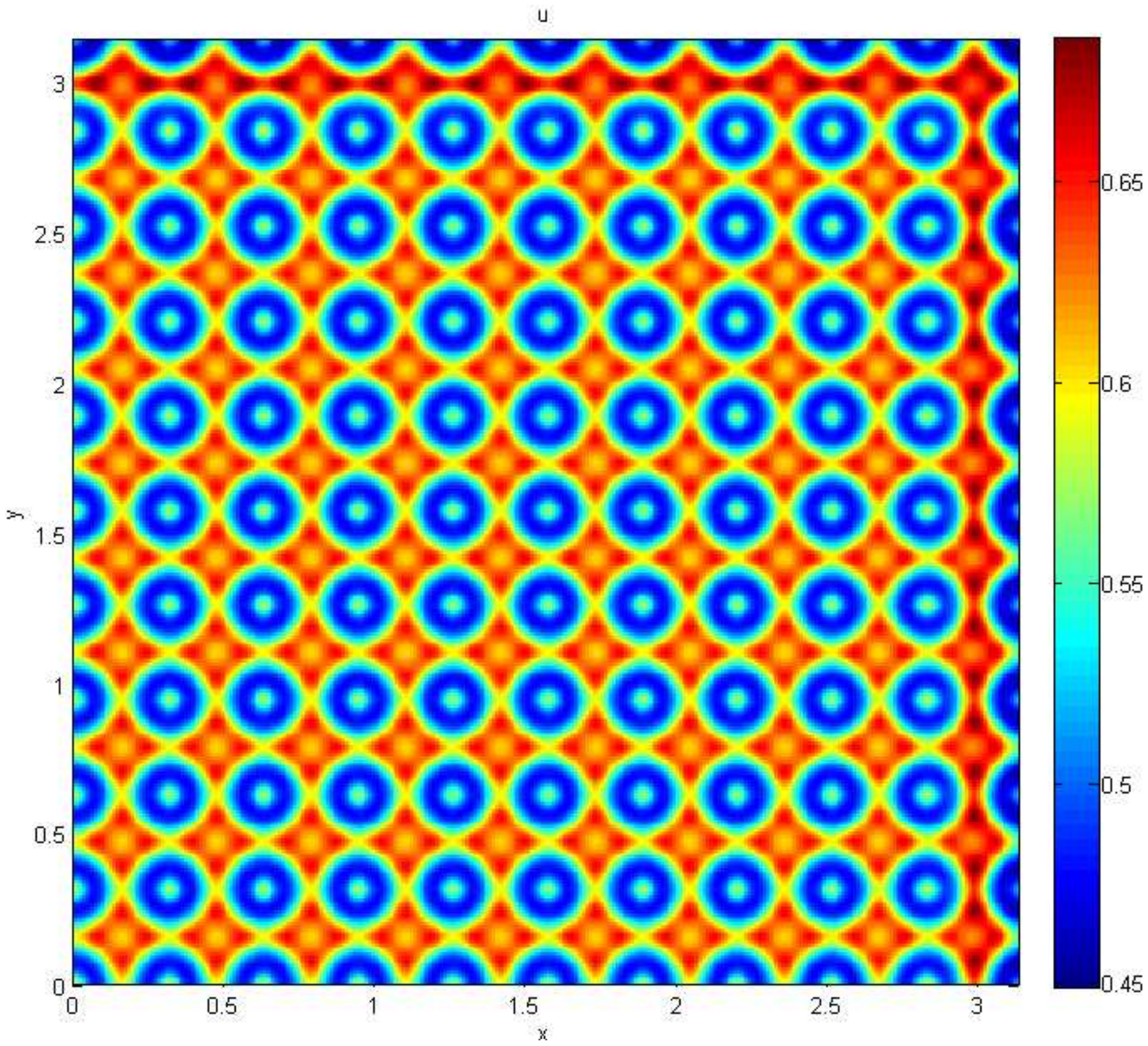}
    \includegraphics[scale=0.17]{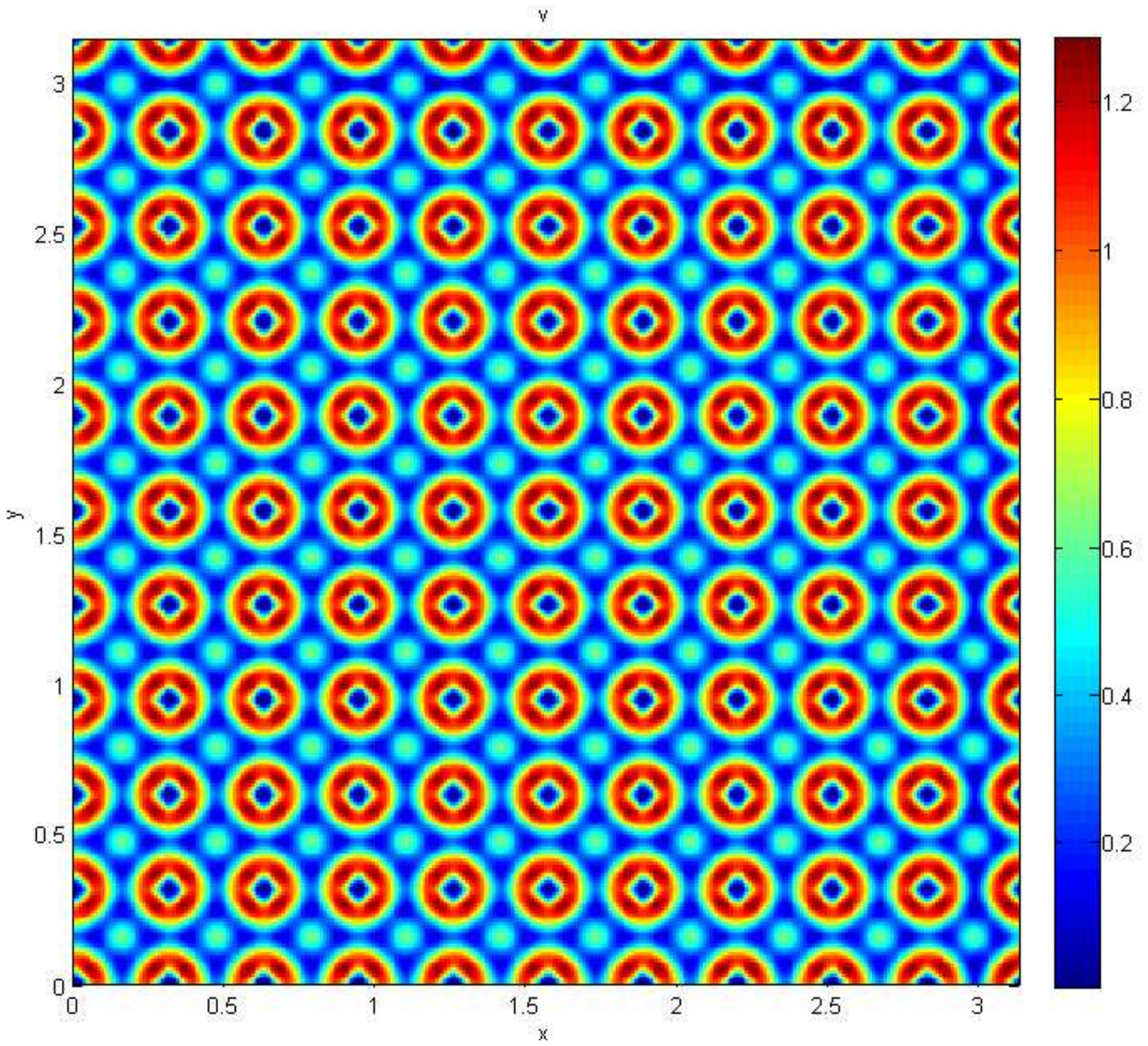}
	\includegraphics[scale=0.17]{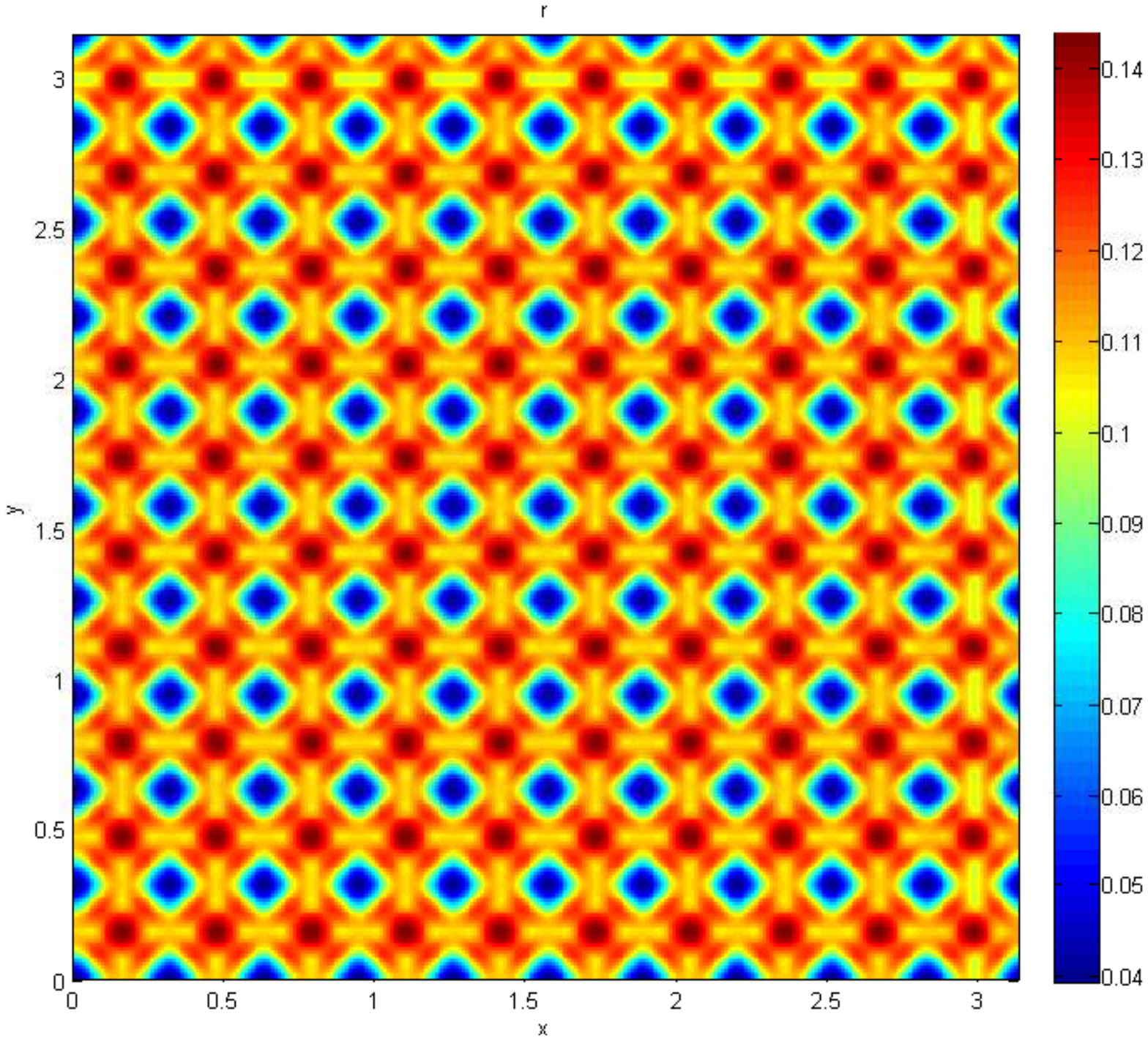}
	\end{center}
		\caption{The densities of the three species are shown as contour plots in the x-y plane (2 dimensional in space). The long-time simulation yields spot patterns. The  parameters are: $ m=0, w_1 = 0.95, w_2 = 0.3, w_3 = 0.82, w_4 = 0.53, a_2 = 0.01,D_3 = 0.1, c = 0.1, d_1 =10^{-3},  d_2 = 10^{-5}, d_3 =10^{-5}$}
	\label{fig:turing4}
\end{figure}\\
\begin{figure}[!htp]
	\begin{center}
	\includegraphics[scale=0.17]{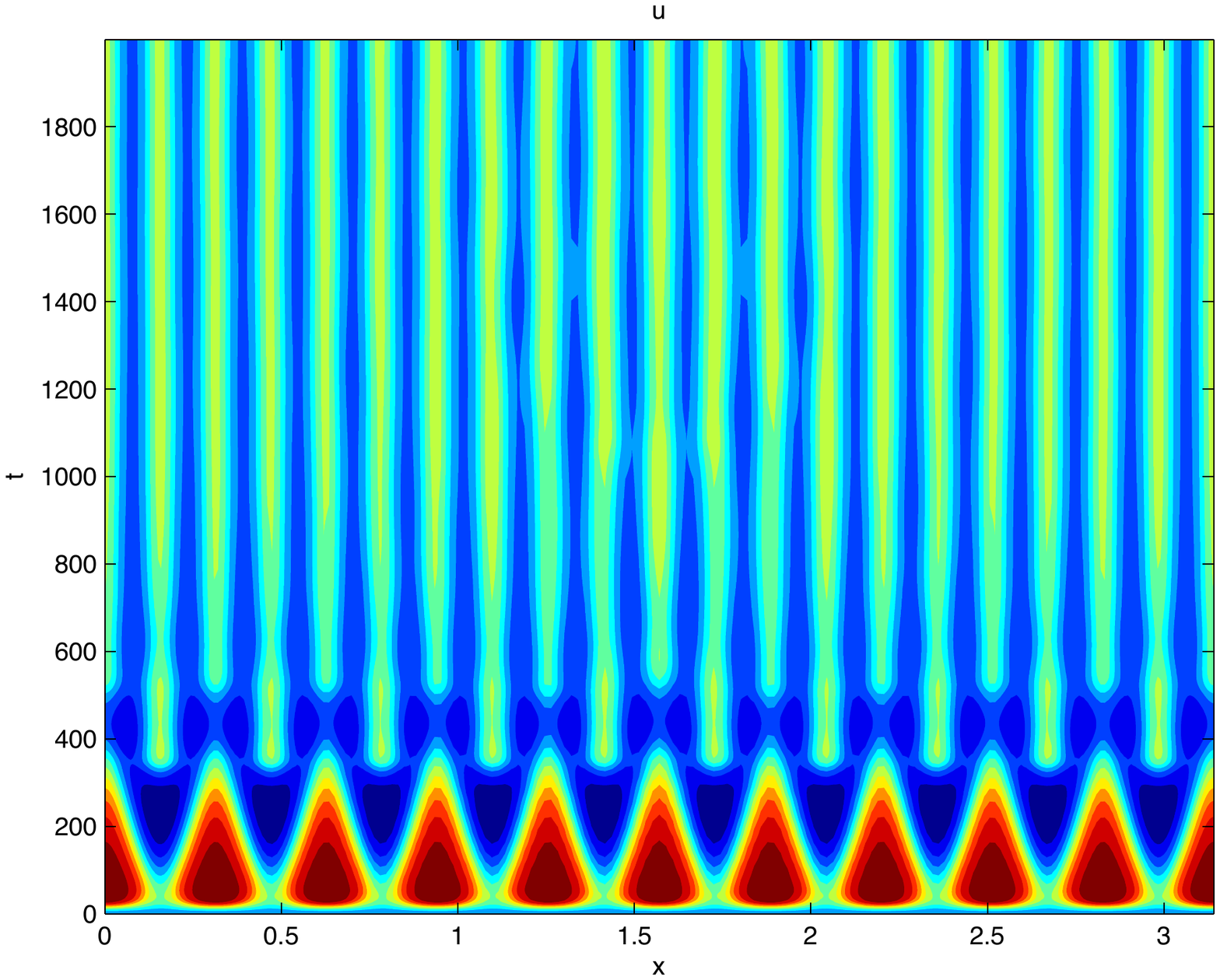}
    \includegraphics[scale=0.17]{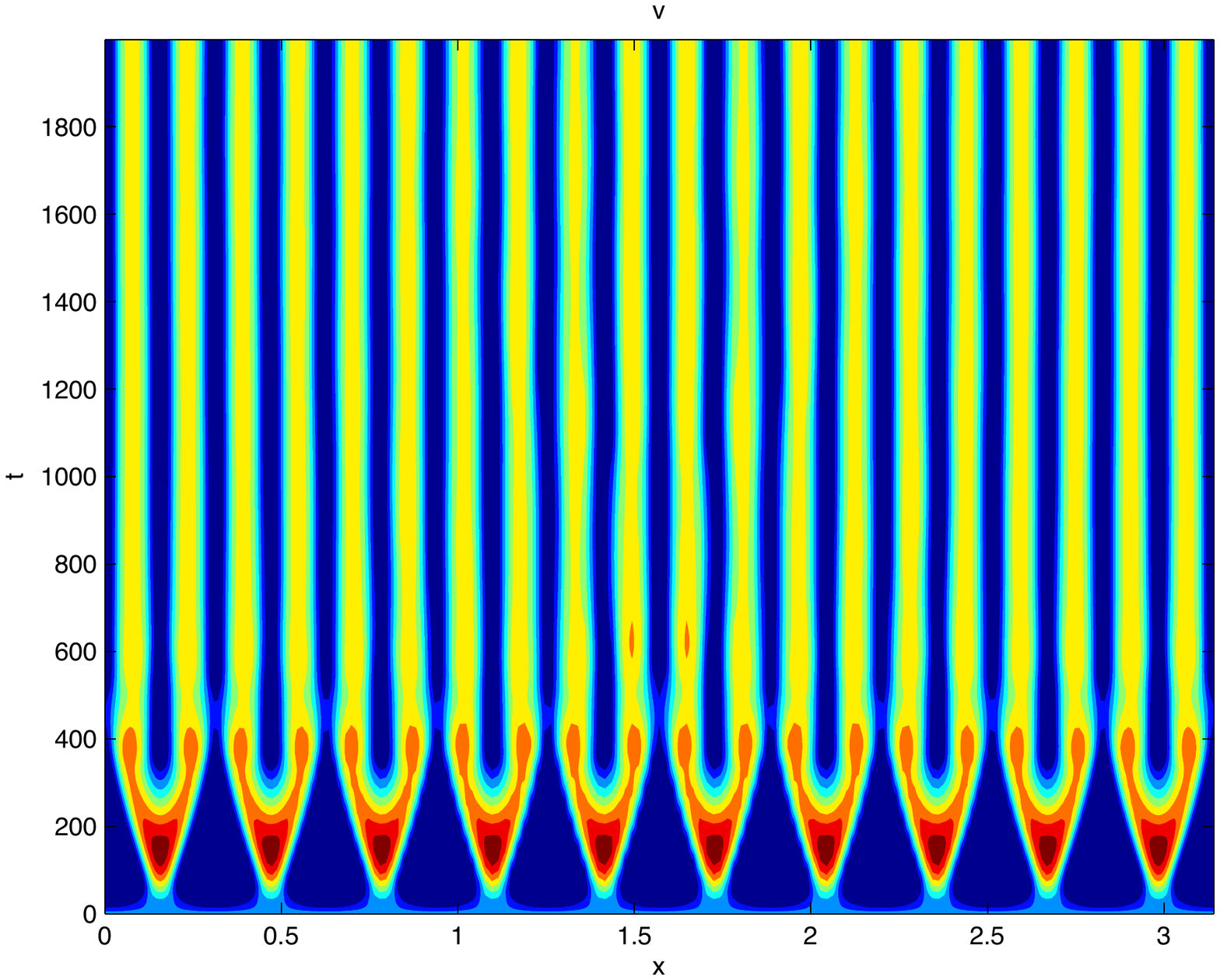}
	\includegraphics[scale=0.17]{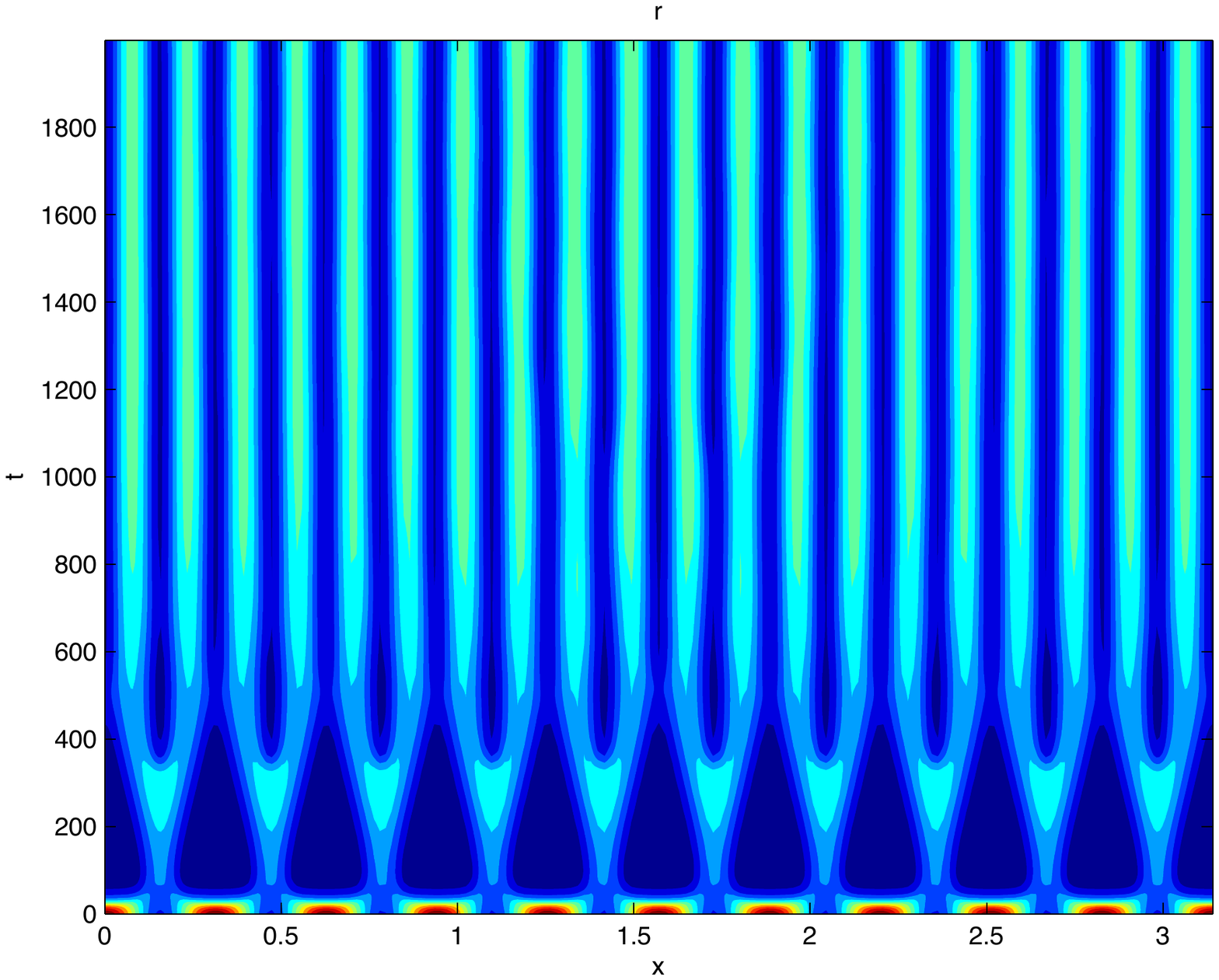}
	\end{center}
		\caption{The densities of the three species are shown as contour plots in the x-t plane (1 dimensional in space). The long-time simulation yields strip patterns. The  parameters are: $ m=0.1, w_1 = 0.95, w_2 = 0.3, w_3 = 0.82, w_4 = 0.53, a_2 = 0.01,D_3 = 0.1, c = 0.1, d_1 =10^{-3},  d_2 = 10^{-5}, d_3 =10^{-5}$ }
	\label{fig:turing5}
\end{figure}
\begin{figure}[!htp]
	\begin{center}
	\includegraphics[scale=0.20]{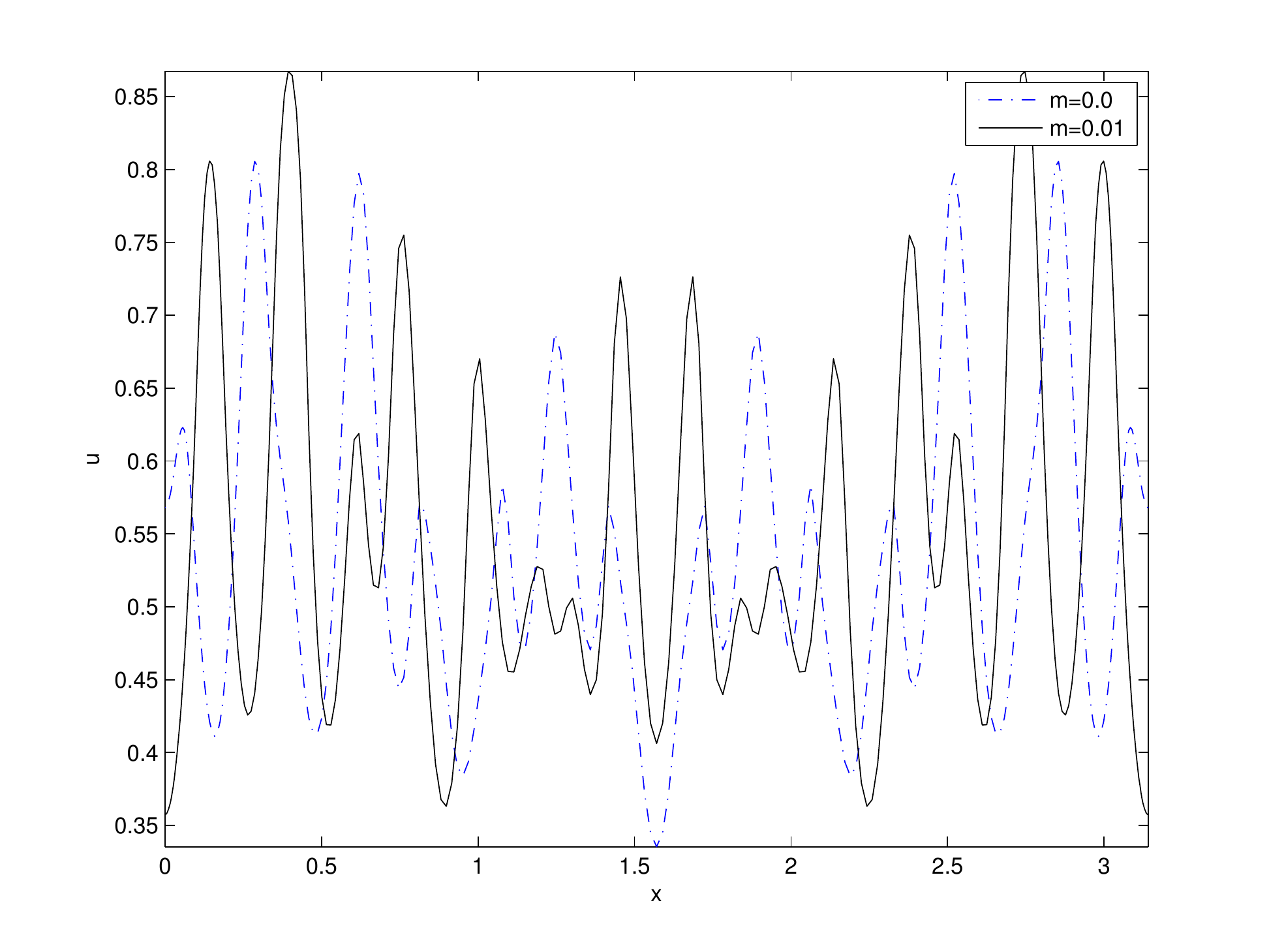}
    \includegraphics[scale=0.20]{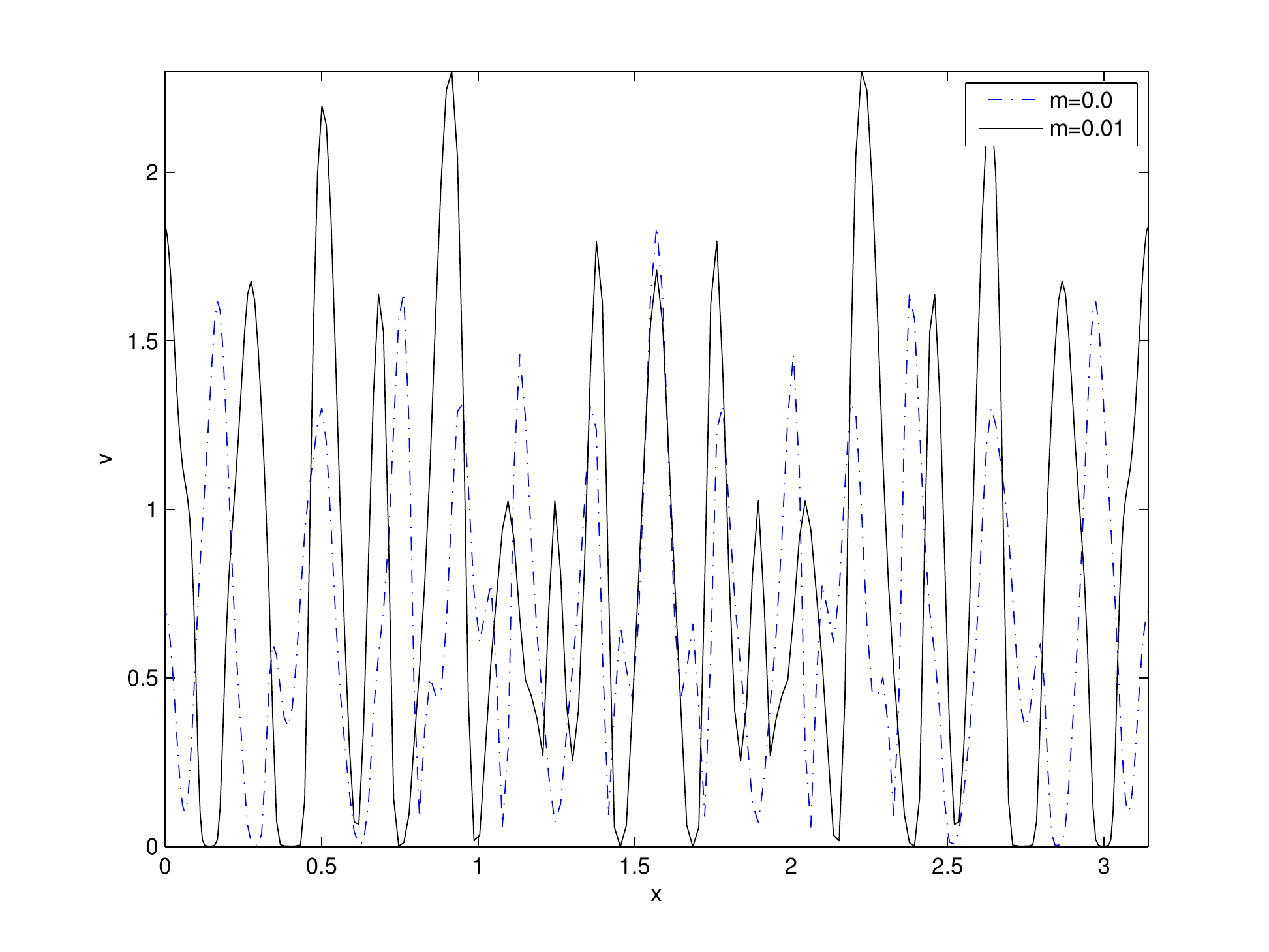}
	\includegraphics[scale=0.20]{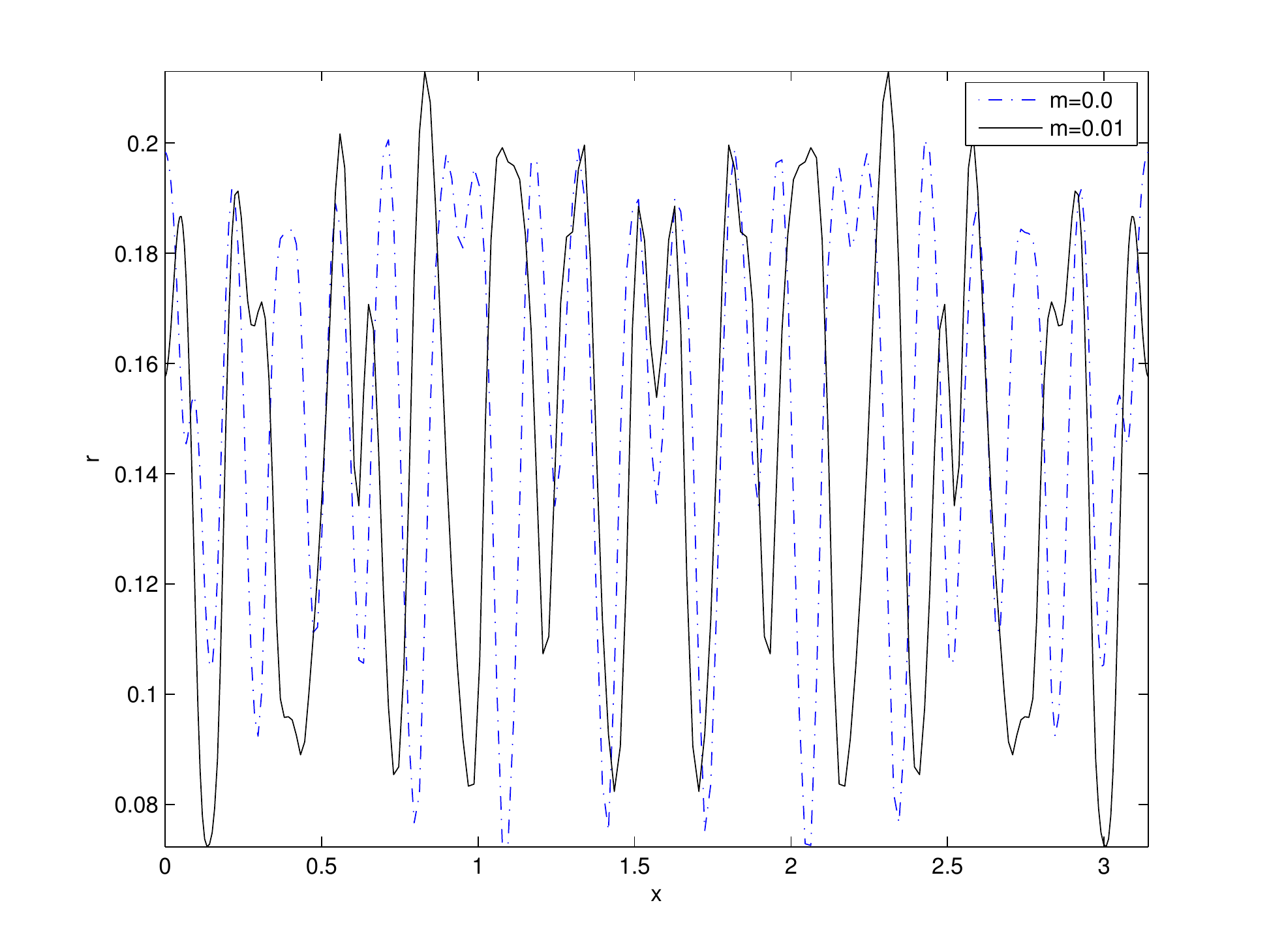}
	\end{center}
		\caption{The density of the three species shown in figure \ref{fig:turing1} has $m=0.01$. We want to compare this density to the case where $m=0$. Thus we run a simulation with $m=0$, and the same parameters in figure \ref{fig:turing1}, $w_1 = 0.95, w_2 = 0.3, w_3 = 0.82, w_4 = 0.53, a_2 = 0.01,D_3 = 0.1, c = 0.1, d_1 =10^{-3},  d_2 = 10^{-5}, d_3 =10^{-5}.$}
We next observe the densities at time t=1500 for $m=0$ (dashed line) vs $m=0.01$ (solid line), to see how the species density changes in space as the Allee threshold $m$ changes. 
	\label{fig:turing45}
\end{figure}\\
\begin{figure}[!htp]
	\begin{center}
	\includegraphics[scale=0.22]{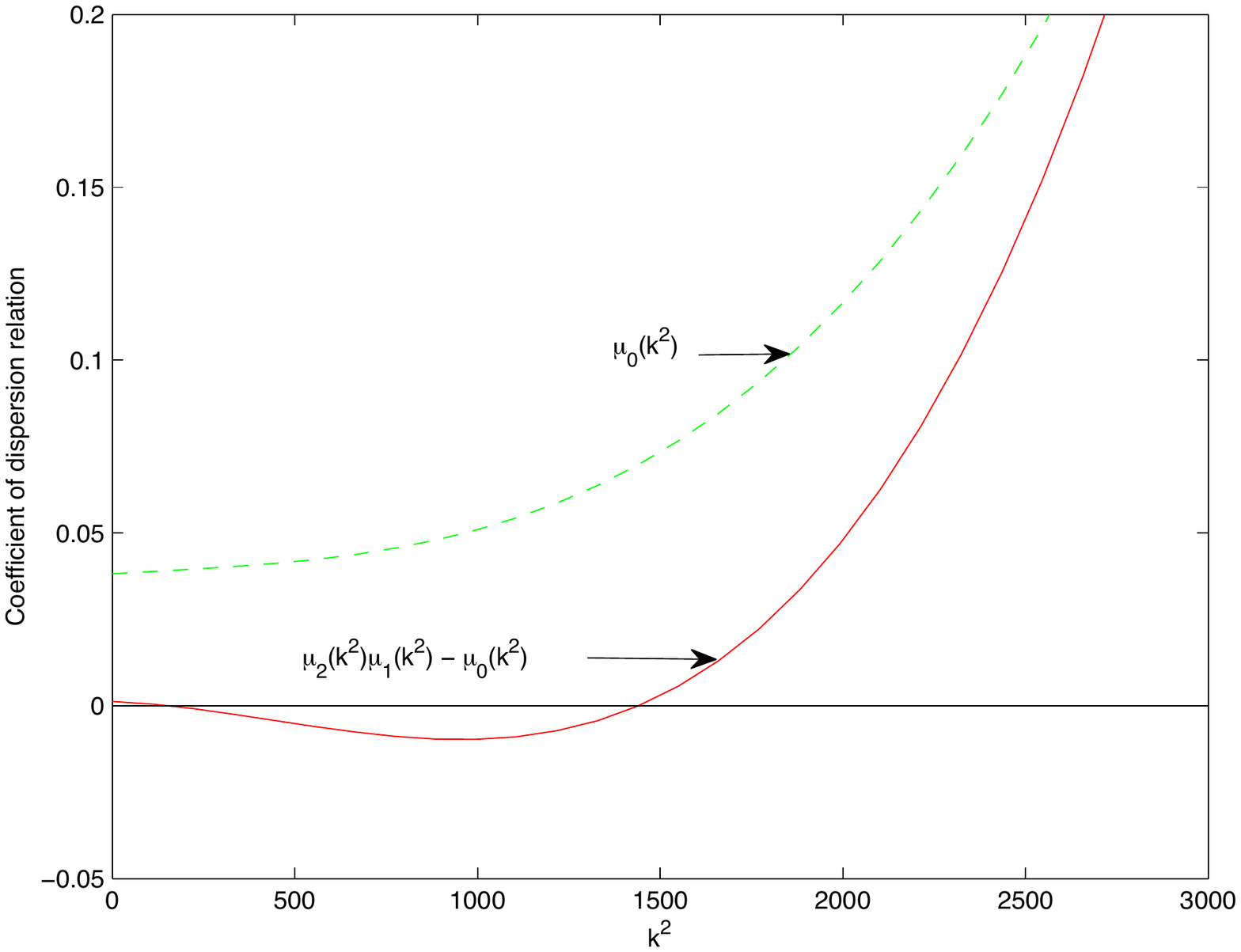}
    \includegraphics[scale=0.22]{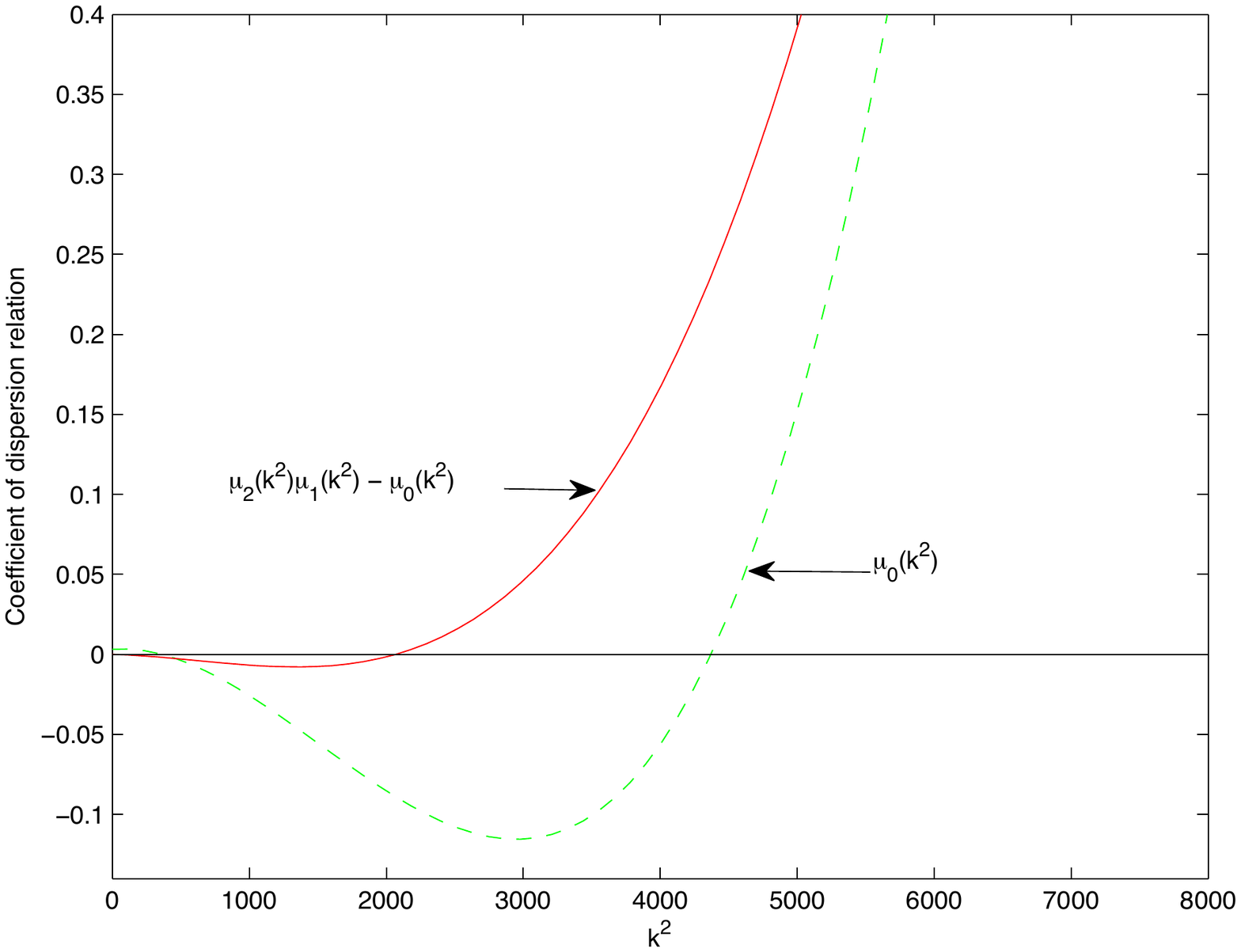}
	\end{center}
		\caption{Plot of coefficient of dispersion relation with $m=0$ and $m=0.1$ for figure \eqref{fig:turing3} and figure \eqref{fig:turing4} respectively. }
	\label{fig:Disp1}
\end{figure}
Table \eqref{tab:ODE Stability}, also displays the fact that no patterns were observed as $m$ increases from $m=0.01$ to $m=0.5$ from  figure  \eqref{fig:turing1} and figure \eqref{fig:turing2} because $m$ can destabilizes an already stable equilibrium point in the ode model as in when $k=0$. Hence as in figure  \eqref{fig:turing1}  and figure \eqref{fig:turing2} where $m=0.01$ produces patterns since $E_8(u^*,v^*,r^*)$ is stable when $k=0$, this stable ($+$) equilibrium point becomes unstable ($-$) even when $k=0$. 
\begin{table}[htb]
\begin{center}
\begin{tabular*}{.935\textwidth}{| c | c | c |c | }
  \hline\hline
    $m$ & $H_H=\mu_0(k^2=0)$  & $H_H=[\mu_2\mu_1-\mu_0](k^2=0)$& Pattern\\ \hline
    \hline
    $0$       & $+$   & $+$ &   Patterns may occur  \\ \hline
    $0.1$    & $+$   & $+$ &   Patterns may occur \\ \hline
    $0.2$    & $-$    & $-$  &   No Patterns \\ \hline
    $0.3$    & $-$    & $-$  &   No Patterns \\ \hline
    $0.4$    & $-$    & $-$  &   No Patterns \\ \hline
    $0.5$    & $-$    & $-$  &   No Patterns  \\ \hline
\end{tabular*}
\end{center}
		\caption{The influence of $m$ on the sign of $\boldsymbol {\mu_0}(k^2)$ and $[\boldsymbol {\mu_2}\boldsymbol {\mu_1}-\boldsymbol {\mu_0}](k^2)$. Describing the stability of $E_8(u^*,v^*,r^*)$ when $k=0$  and how its might lead to pattern formation. The  parameters are: $w_1 = 0.96, w_2 = 0.52, w_3 = 1.06, w_4 = 0.37, a_2 = 0.014,D_3 = 0.1, c = 0.1, d_1 =10^{-3},  d_2 = 10^{-5}, d_3 =10^{-5}$.}
		    \label{tab:ODE Stability}
\end{table}

\section{Discussion}

In this manuscript we have considered a spatially explicit ratio dependent three species food chain model, with a strong Allee effect in the top predator. 
The model represents dynamics of three interacting species in ``well-mixed" conditions and is applicable to ecological problems in terrestrial, as well as aquatic systems. Our goal is to understand the effect of the Allee threshold on the system dynamics, as there is not much work on Allee effects in multi-trophic level food chains, and we hope to address this here.  

We first prove the existence of a global attractor for the model. In many systems an interesting question is to consider the effect on the global attractor, under perturbation of the physical parameters in the system. Most often continuity cannot be proven, but upper semi-continuity can.
We show that the global attractor is upper semi-continuous w.r.t the Allee threshold parameter $m$, that is 
$dist_{E}(\mathcal{A}_{m},\mathcal{A}_{0}) \rightarrow 0, \ \mbox{as} \ m \rightarrow 0^{+}$, via theorem \ref{thm:gaus1sc}. 
To the best of our knowledge this is the first robustness result for a three-species model with strong Allee effect. 
Unless a systems dynamics are robust, there is no possibility to capture the same ecological behavior in a laboratory experiment, or natural setting.

The next question of interest, after one has proved such a result, is estimating the rate of decay, in terms of the physical parameter in question. Since there are no theoretical results to estimate the decay rate to the target attractor (that is when $m=0$), we choose to investigate this rate numerically.
We find decay estimates of the order of $\mathcal{O}(m^{\gamma})$, where $\gamma$ is found explicitly to be slightly less, but very close to 1.

We also find that $m$ effects the spatiotemporal dynamics of the system in two distinct ways.
\newline
(1) It changes the Turing patterns that occur in the system, in both one and two spatial dimensions. 
That is varying $m$, has a distinct effect on the sorts of Turing patterns that form.
This can have interesting effects on the patchiness of spatially dispersing species. Our results say that if one can effect the Allee threshold in the top predator, one can effectively change the spatial concentrations of the species involved. This is best visualized in figure \ref{fig:turing45}. This could have many potential applications in Allee mediated biological control, and conservation, as recently biologists have begun to consider Allee effects as beneficial in limiting establishment of an invading species \cite{tobin2011exploiting}.
\newline
(2) It facilitates overexploitation phenomenon. That is without $m$, or when $m=0$, one does not see extinction in top predator, as the origin becomes unstable. However,
introducing it stabilises the origin, and low initial concentrations of $r$, may yield extinction, via theorems \ref{thm:ox}. This has many possible applications to top down trophic cascades and cascade control. This also tells us that in a three species system, where the top predator can switch its food source, only having an Allee effect in place can possibly drive it to extinction, thus bring about true overexploitation. This could have potential applications in Allee mediated biological control, if the top predator is an invasive species.

As future investigations it would be interesting to try to model weak Allee effects in top predator, and/or Allee effects in the other species as well, be they weak or strong. It has been stipulated with evidence, that two or more Allee effects can occur simultaneously in the same population\cite{berec2007multiple}. Thus an extremely interesting question would be to consider different type of Allee effects in the different species, and investigate the interplay of the various Allee thresholds, as they affect the dynamics of the system. 
All in all our results would be of interest to both the mathematical and ecological communities, and in particular to groups that are interested in conservation efforts in food chain systems, cascade control, such as in many aquatic systems, and even Allee mediated biological control.

%% ======= Reference
\bibliographystyle{plain} \bibliography{REF}
%% ======= 

\section{Appendix}
\label{app}

\subsection{ Nondimensionalisation}
\label{app1}
The model system in \eqref{eq:x1}-\eqref{eq:x3} is a nondimensionalised version of the following system
\begin{subequations} \label{eq:main}
\begin{align}
{\partial U \over {\partial T }}&=D_U\Delta U + A_1U-B_1U^2-\frac{W_1UV} {\beta_1 V+U},\\
{\partial V \over {\partial T }}&= D_V\Delta V -A_2V+\frac{W_2UV} {\beta_1 V+U}-\frac{W_3 VR} {\beta_3 R+V},\\
{\partial R \over {\partial T }}&= D_R\Delta R +R\big({{R-M}}\big)\bigg(C-\frac{W_4R} {V+A_3}\bigg).\label{eq:main1}
\end{align}
\end{subequations}

$A_1$ is the self-growth rate of the prey population $U$. $A_2$ the intrinsic death rate of the specialist predator $V$ in the absence of its only food $U$, $C$ measures the rate of self-reproduction of the generalist predator $R$. $B_1$ measures the strength of competition among individuals of the prey species $U$. $W_i's$ are the maximum values which per capita growth rate can attain. $A_3$ represents the residual loss in $R$ population due to severe scarcity of its favorite food $V$. $\beta_{1}$ is the parameters that describes the handling time of the prey $U$ by predator $V$, and $\beta_{3}$ is the parameter that describes the handling time of the prey $V$ by predator $R$.\\

 We will now go over the details of the nondimensionalisation.
We consider \eqref{eq:main}, and aim to introduce a change of variables and time scaling which reduces the number of parameters of model system \eqref{eq:main}‎.  We take
\begin{align*}
 u&={U B_1\over A_1}, \quad v={VB_1\beta_1 \over A_1},\quad r={Rb_1\beta_1\beta_3 \over A_1},\\
 t&={T \over {A_1}},\quad w_1={W_1\over \beta_1A_1},\quad a_2={A_2\over A_1},\\
 w_2&={W_2\over A_1},\quad w_3={W_3\over \beta_3A_1},\quad c={CA_1\over {A_1B_1\beta_1\beta_3}},\\
 m&={MB_1\beta_1\beta_3\over A_1},\quad D_3={A_3B_1\beta_1\over A_1},\quad w_4={W_4B_1\over {B_1^2\beta_1\beta_3^2}},\\
 d_1&={D_U\pi^2\over B_1L^2},\quad d_2={D_V\pi^2\over B_1\beta_1L^2 } ,\quad d_3={D_R\pi^2\over {B_1\beta_1\beta_3L^2}}.
\end{align*}
Considering Neuman boundary conditions, model system \eqref{eq:main} reduces to

\begin{align}
&\frac{\partial u}{\partial t}= d_1\Delta u + u-u^{2}-w_{1}\frac{uv}{u+v}, \label{eq:x0a}\\
&\frac{\partial v}{ \partial t}= d_2 \Delta v -a_{2}v+w_{2}\frac{uv}{u+v}-w_{3}\left(\frac{vr}{v+r}\right), \label{eq:x2a}\\
&\frac{\partial r}{ \partial t} = d_3 \Delta r + r\big({{r-m}}\big)\bigg(c-\frac{w_4r} {v+D_3}\bigg).\label{eq:x3a}
\end{align}
Also all parameters associated with model system (\ref{eq:x0a}-\ref{eq:x3a}) are assumed to be positive constants and have the usual biological meaning. \\

\subsection{Global existence}
\label{app0}
Here we prove proposition \ref{ge1}. The positivity of solutions follow trivially from the form of the reaction terms, which are quasi positive.
By the positivity of the first component $u(t,.)$ of the solution to \eqref{eq:x1} on $%
[0,T_{\max })$ $\times \Omega $, we get from equation \eqref{eq:x1}%
\begin{equation}
 \label{3.1}
\frac{\partial u}{\partial t}-d_{1}\Delta u\leq a_{1}u,\text{ \ on }[0,T_{\max })\times
\Omega ,  
\end{equation}%
then we use a comparison argument \cite{smoller}. That is, we can compare the solution $u$ of \eqref{3.1} to the solution $u^*$, of the linear heat equation

\begin{equation}
 \label{3.1ns}
\frac{\partial u^{*}}{\partial t}-d_{1}\Delta u^{*}= a_{1}u^{*},\text{ \ on }[0,T_{\max })\times
\Omega ,  
\end{equation}
where $u^*$ satisfies the same initial and boundary conditions as $u$.
Clearly $u \leq u^*$, and since $u^{*}$ (being the solution of a linear heat equation) is bounded, so is $u$, and we deduce

\begin{equation}
\label{3.2}
u(t,.)\leq C_{1},  
\end{equation}%
where $C_{1}$ is a constant depending only on $T_{\max }$. Then equation
\eqref{eq:x2} gives%
\begin{equation}
 \label{3.3}
\frac{\partial v}{\partial t}-d_{2}\Delta v \leq w_{1}C_{1}v, \text{ \ on }[0,T_{\max
}) \times \Omega , 
\end{equation}%
which implies by the same arguments%
\begin{equation}
\label{3.4}
v(t,.)\leq C_{2},\text{ \ on }[0,T_{\max }) \times \Omega ,  
\end{equation}%
where $C_{2}$ is a constant depending only on $T_{\max }$.
To get a bound on the component $r$ we apply Young's inequality to yield

\begin{eqnarray}
&& \frac{\partial r}{\partial t}-d_{3}\Delta r + Mcr \nonumber \\
&& =   \left( c+\frac{Mw_{3}}{v+D_{3}}\right)r^{2} -  \left( \frac{w_{3}}{v+D_{3}}\right)r^{3}     \nonumber \\
&&  \leq \left( c+\frac{Mw_{3}}{D_{3}}\right)r^{2} -  \left( \frac{w_{3}}{v+D_{3}}\right)r^{3}    \nonumber \\
&& = \left( c+\frac{Mw_{3}}{D_{3}}\right)\left( \frac{v+D_{3}}{w_{3}}\right)^{\frac{2}{3}} \left( \frac{v+D_{3}}{w_{3}}\right)^{-\frac{2}{3}} r^{2} -  \left( \frac{w_{3}}{v+D_{3}}\right)r^{3}  \nonumber \\
&& \leq \left( c+\frac{Mw_{3}}{D_{3}}\right)^{3} \left( \frac{v+D_{3}}{w_{3}}\right)^{2} +   \left( \frac{w_{3}}{v+D_{3}}\right)r^{3}  -  \left( \frac{w_{3}}{v+D_{3}}\right)r^{3}  \nonumber \\
&& \leq \left( c+\frac{Mw_{3}}{D_{3}}\right)^{3} \left( \frac{C_{2}+D_{3}}{w_{3}}\right)^{2} 
\end{eqnarray}

Here we use the bound on $v$ via \eqref{3.4}. Thus there exists a positive constant $C_{3}$ such that%
\begin{equation}
\label{3.9}
r(t,.)\leq C_{3} =  \left( c+\frac{Mw_{3}}{D_{3}}\right)^{3} \left( \frac{C_{2}+D_{3}}{w_{3}}\right)^{2} ,\text{ \ on }[0,T_{\max }) \times \Omega . 
\end{equation}%
At this step via \eqref{3.2}, \eqref{3.4}, \eqref{3.9} standard theory \cite{henry} is
applicable and the solution is global ($T_{\max }=+\infty $).
This proves the proposition.

\subsection{Apriori estimates}
\label{app2}
In all estimates made hence forth the constants $C, C_{1}, C_{2}, C_{3}, C_{\infty}$ are generic constants, that can change in value from line to line, and sometimes within the same line if so required.
We estimate the gradient of $r$ by multiplying \eqref{eq:x3} by $-\Delta r$, and integrating by parts and using standard methods to obtain 

\begin{equation}
\label{eq:x11}
\frac{1}{2}\frac{d}{dt}||\nabla r||^{2}_{2}  + d_{3}||\Delta r||^{2}_{2} = \int_{\Omega}r(r-m)(c-w_{4}\frac{r}{v+D_{3}})(-\Delta r)dx.
\end{equation}

We have to estimate $\int_{\Omega}r(r-m)(c-w_{4}\frac{r}{v+D_{3}})(-\Delta r)dx$. Note

\begin{align*}
\int_{\Omega}r(r-m)(c-\frac{r}{v+D_{3}})(-\Delta r)dx
& = \int_{\Omega} \left( \left( c+\frac{mw_{4}}{v+D_{3}}\right)r^{2} - \frac{w_{4}}{v+D_{3}}r^{3} - mcr  \right) (-\Delta r) dx, \nonumber \\
& = \int_{\Omega} \left( \left( c+\frac{mw_{4}}{v+D_{3}}\right)r^{2} - \frac{w_{4}}{v+D_{3}}r^{3} \right)   (-\Delta r) dx - cm||\nabla r||^{2}_{2}. \nonumber \\
\end{align*}

Thus we obtain

\begin{equation}
\frac{1}{2}\frac{d}{dt}||\nabla r||^{2}_{2}  + d_{3}||\Delta r||^{2}_{2} + cm||\nabla r||^{2}_{2}  =   \int_{\Omega} \left( \left(c+\frac{mw_{4}}{v+D_{3}}\right)r^{2} - \frac{w_{4}}{v+D_{3}}r^{3}  \right)  (-\Delta r) dx.
\end{equation}

We now proceed in 2 steps. We first assume $\left( \left(c+\frac{mw_{4}}{v+D_{3}}\right)r^{2} - \frac{w_{4}}{v+D_{3}}r^{3} \right) \geq 0$.

Young's Inequality with epsilon then gives us the following estimate 

\begin{align}
\label{eq:x11y}
\left( \left(c+\frac{mw_{4}}{v+D_{3}}\right)r^{2} - \frac{w_{4}}{v+D_{3}}r^{3} \right)&\leq\left( \frac{v+D_{3}}{w_{4}}\right)^{\frac{1}{3}} \left(c+\frac{mw_{4}}{v+D_{3}}\right)r \left(\frac{w_{4}}{v+D_{3}}\right)^{\frac{1}{3}}r -w_{4}\frac{r^{3}}{v+D_{3}},\nonumber \\
& \leq \left(\frac{v+D_{3}}{w_{4}}\right)^{\frac{4}{9}} \left(c+\frac{mw_{4}}{v+D_{3}}\right)^{\frac{4}{3}}r^{\frac{4}{3}} + w_{4}\frac{r^{3}}{v+D_{3}} -w_{4}\frac{r^{3}}{v+D_{3}}, \nonumber \\
& \leq  \left(\frac{||v+D_{3}||_{\infty}}{w_{4}}\right)\left(c+\frac{mw_{4}}{v+D_{3}}\right)^{\frac{4}{3}}r^{\frac{4}{3}} + w_{4}\frac{r^{3}}{v+D_{3}} -w_{4}\frac{r^{3}}{v+D_{3}} , \nonumber \\
&\leq  \left(\frac{C+D_{3}}{w_{4}}\right) \left(c+\frac{mw_{4}}{D_{3}}\right)^{\frac{4}{3}}r^{\frac{4}{3}} +  w_{4}\frac{r^{3}}{v+D_{3}} -w_{4}\frac{r^{3}}{v+D_{3}},\nonumber \\
& =  \left(\frac{C+D_{3}}{w_{4}}\right) \left(c+\frac{mw_{4}}{D_{3}}\right)^{\frac{4}{3}}r^{\frac{4}{3}}.  \nonumber \\
\end{align}

Then we obtain

\begin{eqnarray}
\label{eq:x11nn}
\frac{1}{2}\frac{d}{dt}||\nabla r||^{2}_{2}  + d_{3}||\Delta r||^{2}_{2} + 2cm||\nabla r||^{2}_{2}&=& \int_{\Omega}\left( \left(c+\frac{mw_{4}}{v+D_{3}}\right)r^{2} - \frac{w_{4}}{v+D_{3}}r^{3}  \right)(-\Delta r)dx, \nonumber \\
&\leq&  \left(\frac{C+D_{3}}{w_{4}}\right) \left(c+\frac{mw_{4}}{D_{3}}\right)^{\frac{4}{3}}\int_{\Omega}r^{\frac{4}{3}}|-\Delta r| dx\nonumber \\
\end{eqnarray}
 This follows via \eqref{eq:x11y}. Thus we obtain

\begin{equation}
\label{eq:x11}
\frac{1}{2}\frac{d}{dt}||\nabla r||^{2}_{2}  + d_{3}||\Delta r||^{2}_{2}  \leq C_{1}||r||^{\frac{8}{3}}_{\frac{8}{3}} + \frac{d_{3}}{2}||\Delta r||^{2}_{2}.
\end{equation}

Here $C_{1}=\left(\frac{C+D_{3}}{w_{4}}\right) \left(c+\frac{w_{4}}{D_{3}}\right)^{\frac{4}{3}}$.
Now we can use the sobolev embedding of $H^1(\Omega) \hookrightarrow L^{\frac{8}{3}}(\Omega)$, where $C_{3}$ is the embedding constant, to obtain

\begin{equation}
\label{eq:x11}
\frac{d}{dt}||\nabla r||^{2}_{2}  + d_{3}||\Delta r||^{2}_{2}   \leq C_{3}C_{1}\left(||\nabla r||^{2}_{2}\right)\left(||\nabla r||^{2}_{2}\right).
\end{equation}

Thus

\begin{equation}
\label{eq:x11}
\frac{d}{dt}||\nabla r||^{2}_{2}  \leq C_{3}C_{1}(||\nabla r||^{2}_{2})(||\nabla r||^{2}_{2}).
\end{equation}

Now we invoke the estimate via \eqref{eq:x11n23n}, and the uniform Gronwall lemma to obtain

\begin{equation}
\label{eq:f1r4}
 \mathop{\limsup}_{t \rightarrow \infty}||\nabla r||^{2}_{2}  \leq C.
\end{equation}

Now assume $\left( \left(c+\frac{mw_{4}}{v+D_{3}}\right)r^{2} - \frac{w_{4}}{v+D_{3}}r^{3} \right) < 0$.
Then we obtain

\begin{eqnarray}
\label{eq:x11nn}
&&\frac{1}{2}\frac{d}{dt}||\nabla r||^{2}_{2}  + d_{3}||\Delta r||^{2}_{2} + cm||\nabla r||^{2}_{2},\nonumber \\
&\leq& \int_{\Omega}\left(c+\frac{mw_{4}}{D_{3}}\right)r^{2}(-\Delta r)dx, \nonumber \\
&\leq&  \frac{2}{d_{3}}\left(c+\frac{mw_{4}}{D_{3}}\right)||r||^{4}_{4} + \frac{d_{3}}{2}||\Delta r||^{2}_{2}. \nonumber \\
\end{eqnarray}

We now use the Sobolev embedding of $H^1(\Omega) \hookrightarrow L^{4}(\Omega)$ to obtain
\begin{equation}
\label{eq:x1mn}
\frac{d}{dt}||\nabla r||^{2}_{2}  + d_{3}||\Delta r||^{2}_{2} + cm||\nabla r||^{2}_{2} \leq \frac{2}{d_{3}}\left(c+\frac{w_{4}}{D_{3}}\right) ||\nabla r||^{4}_{2},
\end{equation}

dropping the $cm||\nabla r||^{2}_{2}$ term yields

\begin{equation}
\frac{d}{dt}||\nabla r||^{2}_{2}  \leq \frac{2}{d_{3}}\left(c+\frac{w_{4}}{D_{3}}\right) \left(||\nabla r||^{2}_{2}\right)\left(||\nabla r||^{2}_{2}\right),
\end{equation}

\begin{equation}
\label{eq:f1d4nn}
 \mathop{\limsup}_{t \rightarrow \infty} ||\nabla r||^{2}_{2}  \leq C, 
\end{equation}

\subsection{Apriori estimates independent of the parameter $m$}
\label{app3}
Recall the following $L^2$ estimate

\begin{equation*}
\frac{1}{2}\frac{d}{dt}||r||^{2}_{2}  + d_{3}||\nabla r||^{2}_{2} + cm||r||^{2}_{2} + \int_{\Omega}\frac{w_4}{v+D_{3}}r^{4}dx \leq \int_{\Omega}\left(c+\frac{mw_{4}}{v+D_{3}} \right)r^{3} dx.
\end{equation*}

We then use H\"{o}lder's inequality followed by Young's inequality  to obtain

\begin{eqnarray}
\label{eq:x11r}
&&\frac{1}{2}\frac{d}{dt}||r||^{2}_{2}  + d_{3}||\nabla r||^{2}_{2} + cm||r||^{2}_{2} + \int_{\Omega}\frac{w_4}{v+D_{3}}r^{4}dx,  \nonumber \\
&& \leq \frac{3}{4}\int_{\Omega}\frac{w_4}{v+D_{3}}r^{4}dx + |\Omega|^{\frac{1}{4}}\left(c+\frac{w_{4}}{D_{3}}\right) \left(\frac{w_{4}}{D_{3}}\right)^{\frac{1}{4}}.\nonumber \\
\end{eqnarray}

Now we drop the $ cm||r||^{2}_{2}$ term from the left hand side, to avoid the singularity caused by $\frac{1}{m}$, the $d_{3}||\nabla r||^{2}_{2}$, and use the estimate on $||v||_{\infty}$ via \eqref{eq:liea}, and the embedding of $L^{4}(\Omega) \hookrightarrow L^{2}(\Omega)$ to obtain

\begin{equation}
\frac{d}{dt}||r||^{2}_{2} +  \frac{Cw_4}{C+D_{3}}||r||^{2}_{2} \leq    2|\Omega|^{\frac{1}{4}}\left(c+\frac{w_{4}}{D_{3}}\right) \left(\frac{w_{4}}{D_{3}}\right)^{\frac{1}{4}}.
\end{equation}

Here, we assume $||r||_{2} \geq 1$, else we already have a bound for $||r||_{2} $. 
Next we use Gronwall's inequality to obtain

\begin{equation}
||r||^{2}_{2}  \leq  e^{-\left(  \frac{Cw_4}{C+D_{3}}\right)t}||r_{0}||^{2}_{2} + \frac{2|\Omega|^{\frac{1}{4}}\left(c+\frac{w_{4}}{D_{3}}\right) \left(\frac{w_{4}}{D_{3}}.\right)^{\frac{1}{4}}(C+D_{3})}{Cw_{4}}.
\end{equation}

Thus for $t  > t_{0}= \max(t^{*}+1,\frac{ \ln(||r_{0}||^{2}_{2})}{\left(  \frac{Cw_4}{C+D_{3}}\right)})$, we obtain

\begin{equation}
\label{eq:rl2n}
||r||^{2}_{2}  \leq  1 + \frac{2|\Omega|^{\frac{1}{4}}\left(c+\frac{w_{4}}{D_{3}}\right) \left(\frac{w_{4}}{D_{3}}\right)^{\frac{1}{4}}(C+D_{3})}{Cw_{4}},
\end{equation}

which is a uniform $L^2(\Omega)$ bound in $r$ that is independent of the Allee parameter $m$, time and initial data.

We next focus on making $H^{1}(\Omega)$ estimates, that are independent of the Allee parameter $m$.

We  choose the estimate derived in \eqref{eq:rl2n} for $||r||_{2}$ and insert this in \eqref{eq:x11n23n} to obtain

\begin{eqnarray}
\label{eq:nh1na}
&& \int^{t+1}_{t}||\nabla r||^{2}_{2} ds \nonumber \\
&& \leq  1 + \frac{2|\Omega|^{\frac{1}{4}}\left(c+\frac{w_{4}}{D_{3}}\right) \left(\frac{w_{4}}{D_{3}}\right)^{\frac{1}{4}}(C+D_{3})}{Cw_{4}}+ 2|\Omega|^{\frac{1}{4}}\left(c+\frac{w_{4}}{D_{3}}\right) \left(\frac{w_{4}}{D_{3}}\right)^{\frac{1}{4}},  \nonumber \\
&&  \ \mbox{for} \ t > t_{0}.
\end{eqnarray}

Now we choose 

\begin{equation}
\label{eq:kn1m}
K_{m}=\max \left(\frac{2}{d_{3}}\left(c+\frac{mw_{4}}{D_{3}}\right), C_{3}\left(\frac{C+D_{3}}{w_{4}}\right) \left(c+\frac{mw_{4}}{D_{3}}\right)^{\frac{4}{3}} \right).
\end{equation}

Then, we can majorise the right hand side of the above by plugging in $m=1$. Since the above quantities are not singular in $m$, this is possible. Thus we obtain

\begin{equation}
K=\max \left(\frac{2}{d_{3}}\left(c+\frac{w_{4}}{D_{3}}\right), C_{3}\left(\frac{C+D_{3}}{w_{4}}\right) \left(c+\frac{w_{4}}{D_{3}}\right)^{\frac{4}{3}} \right),
\end{equation}

going back to the $H^1(\Omega)$ estimates in \eqref{eq:f1r4}, and \eqref{eq:f1d4nn}  one obtains,

\begin{equation}
\label{eq:nr1}
\frac{d}{dt}||\nabla r||^{2}_{2}  \leq K  \left(||\nabla r||^{2}_{2} \right) \left(||\nabla r||^{2}_{2}\right).
\end{equation}

Also note the integral in time estimate via \eqref{eq:nh1na} is now independent of $m$.
This allows us to apply the uniform Gronwall lemma on \eqref{eq:nr1} with $\beta(t) = ||\nabla r||^{2}_{2}, \ \zeta(t)=K||\nabla r||^{2}_{2} , \ h(t)= 0, q=1$, and the estimate via \eqref{eq:nh1na} to yield the following bound

\begin{eqnarray}
\label{eq:nrn}
&&||\nabla r||^{2}_{2}  \nonumber \\
&&\leq C_{K} e^{K C_{K}}, \ \mbox{for} \ t > t_{1}=t_{0} + 1.
\end{eqnarray}

Thus the $H^{1}(\Omega)$ estimate for $r$ can be made independent of $m$. 

\subsection{Upper semi-continuity of global attractors}
\label{app4}
In this section we prove upper semi continuity of global attractors.
We first introduce certain concepts pertinent to the upper semicontinuity of attractors.

\begin{definition}[Uniform dissipativity]
Suppose there is a family of semiflows $\{ \{  S_{\lambda}(t) \}_{t \geq 0}\}_{\lambda \in \Lambda}$ on a Banach space $H$, where $\Lambda$ is an open set in a Euclidean space of parameter $\lambda$, is called uniformly dissipative at $\lambda_{0} \in \Lambda$, if there is a neighborhood $\mathcal{N}$ of $\lambda_{0}$ in $\Lambda$ and there is a bounded set $\mathcal{B} \subset H$ such that $\mathcal{B}$ is an absorbing set for each semiflow $S_{\lambda}(t)$, $\lambda \in \mathcal{N}$, in common.
   \end{definition}

\begin{definition}[Upper semicontinuity]
A family of semiflows $\{ \{  S_{\lambda}(t) \}_{t \geq 0}\}_{\lambda \in \Lambda}$ on a Banach space $H$, where $\Lambda$ is an open set in a Euclidean space of parameter $\lambda$, and that there exists a global attractor $\mathcal{A}_{\lambda}$ in $\mathcal{H}$ for each semiflow $\{ \{  S_{\lambda}(t) \}_{t \geq 0}\}_{\lambda \in \Lambda}$. If $\lambda_{0} \in \Lambda$ and

 \begin{equation*}
    dist_{\mathcal{H}}(\mathcal{A}_{\lambda},\mathcal{A}_{\lambda_{0}}) \rightarrow 0,  \ \mbox{as} \  \lambda \rightarrow \lambda_{0} \ \mbox{in} \ \Lambda,
    \end{equation*}

with respect to the Hausdorff semidistance, then we say that the family of global attractors $\{\mathcal{A}_{\lambda} \}_{\lambda \in \Lambda}$ is upper semicontinuous at $\lambda_{0}$, or that $\mathcal{A}_{\lambda} $ is \emph{robust} at $\lambda_{0}$.
 \end{definition}

We next recall the following lemma

\begin{lemma}[Gronwall-Henry Inequality]

Let $\psi(t)$ be a nonnegative function in $L^{\infty}_{loc}[0,T;\mathbb{R})$ and $\zeta(.)$ $\in$ $L^{1}_{loc}[0,T;\mathbb{R})$, such that the following inequality is satisfied:
\begin{equation}
\psi(t) \leq \zeta(t) + \mu\int^{t}_{0} (t-s)^{r-1}\psi(s)ds, \ t \in (0,T),
\end{equation}
where $0 < T \leq \infty$, and $\mu, r$ are positive constants. Then it holds that
\begin{equation}
\psi(t) \leq \zeta(t) + \kappa\int^{t}_{0} \Phi(\kappa(t-s))\psi(s)ds, \ t \in (0,T), 
\end{equation}
where $\kappa = (\mu\Gamma(r))^{\frac{1}{r}}$, $\Gamma(.)$ is the gamma function, and the function $\phi(t)$ is given by

\begin{equation}
\phi(t)= \sum^{\infty}_{n=1} \frac{nr}{\Gamma(nr+1)} t^{nr-1} = \sum^{\infty}_{n=1} \frac{nr}{\Gamma(nr)} t^{nr-1}.
\end{equation}
\end{lemma}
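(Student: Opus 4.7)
The plan is to prove the Gronwall--Henry inequality by iterating the given integral inequality and identifying the resulting power series with $\kappa\,\phi(\kappa(t-s))$. First, I would substitute the assumed inequality into itself once to obtain
$$\psi(t) \leq \zeta(t) + \mu\int_{0}^{t}(t-s)^{r-1}\zeta(s)\,ds + \mu^{2}\int_{0}^{t}\!\!\int_{0}^{s}(t-s)^{r-1}(s-u)^{r-1}\psi(u)\,du\,ds,$$
and then apply Fubini together with the Beta-function identity
$$\int_{u}^{t}(t-s)^{r-1}(s-u)^{r-1}\,ds \;=\; B(r,r)(t-u)^{2r-1} \;=\; \frac{\Gamma(r)^{2}}{\Gamma(2r)}(t-u)^{2r-1}$$
to collapse the double integral into a single convolution with kernel $(t-u)^{2r-1}$ and coefficient $\mu^{2}\Gamma(r)^{2}/\Gamma(2r)$. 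The Beta identity itself is verified by the change of variables $s=u+(t-u)\tau$.

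Next I would perform an induction on $n$, using the general Beta identity
$$\int_{s}^{t}(t-\sigma)^{r-1}(\sigma-s)^{kr-1}\,d\sigma \;=\; \frac{\Gamma(r)\,\Gamma(kr)}{\Gamma((k+1)r)}\,(t-s)^{(k+1)r-1}$$
to produce, after $n$ iterations,
$$\psi(t) \;\leq\; \zeta(t) + \sum_{k=1}^{n-1}\frac{\mu^{k}\Gamma(r)^{k}}{\Gamma(kr)}\int_{0}^{t}(t-s)^{kr-1}\zeta(s)\,ds + R_{n}(t),$$
where the remainder is
$$R_{n}(t) \;=\; \frac{\mu^{n}\Gamma(r)^{n}}{\Gamma(nr)}\int_{0}^{t}(t-s)^{nr-1}\psi(s)\,ds.$$
The inductive step is a direct application of Fubini and the Beta identity above.

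Then I would show $R_{n}(t)\to 0$ as $n\to\infty$ on every compact subinterval $[0,T']\subset[0,T)$. Using $\|\psi\|_{L^{\infty}(0,T')}\leq M$, one estimates
$$|R_{n}(t)| \;\leq\; M\,\frac{\mu^{n}\Gamma(r)^{n}(T')^{nr}}{nr\,\Gamma(nr)} \;=\; M\,\frac{\kappa^{nr}(T')^{nr}}{\Gamma(nr+1)},$$
which tends to zero because $\Gamma(nr+1)$ grows super-exponentially in $n$. Passing to the limit gives
$$\psi(t) \;\leq\; \zeta(t) + \sum_{k=1}^{\infty}\frac{\mu^{k}\Gamma(r)^{k}}{\Gamma(kr)}\int_{0}^{t}(t-s)^{kr-1}\zeta(s)\,ds.$$
Setting $\kappa=(\mu\Gamma(r))^{1/r}$ so that $\mu^{k}\Gamma(r)^{k}=\kappa^{kr}$, the factorization $\kappa^{kr}(t-s)^{kr-1}=\kappa\cdot(\kappa(t-s))^{kr-1}$ identifies the kernel of the series with $\kappa\,\phi(\kappa(t-s))$, which yields the claimed bound.

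The principal obstacle will be controlling the remainder when $T=\infty$: one only has $\psi\in L^{\infty}_{\mathrm{loc}}$, so the argument must be localized to arbitrary $[0,T']\subset[0,T)$ and then extended by locality in $t$. A secondary care point is the combinatorial bookkeeping of the Beta-function constants through the induction, and reconciling the two equivalent forms of $\phi$ displayed in the statement via the identity $\Gamma(nr+1)=nr\,\Gamma(nr)$; once this is recognized, the matching of series indices is routine.
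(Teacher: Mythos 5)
Your iteration argument is correct and is the standard proof of Henry's singular Gronwall lemma; note, however, that the paper does not prove this statement at all --- it merely ``recalls'' the lemma (it is quoted from the standard dynamical-systems literature, e.g.\ Henry's book or \cite{SY02}), so there is no in-paper proof to compare against. Your chain --- self-substitution, Fubini plus the Beta identity $\int_s^t(t-\sigma)^{r-1}(\sigma-s)^{kr-1}\,d\sigma=\frac{\Gamma(r)\Gamma(kr)}{\Gamma((k+1)r)}(t-s)^{(k+1)r-1}$, induction, and the remainder bound $|R_n(t)|\leq M\kappa^{nr}(T')^{nr}/\Gamma(nr+1)\to 0$ via localization to $[0,T']\subset[0,T)$ --- is exactly the right machinery, and your identification $\mu^k\Gamma(r)^k=\kappa^{kr}$ with $\kappa\,\phi(\kappa(t-s))=\sum_{k\geq1}\frac{\mu^k\Gamma(r)^k}{\Gamma(kr)}(t-s)^{kr-1}$ is correct. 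One point worth making explicit: what you actually derive is the bound with $\zeta(s)$ under the final integral, which is the standard (and genuinely useful) form of the lemma, whereas the conclusion as literally printed in the paper has $\psi(s)$ there. That printed form is in fact an immediate consequence of the hypothesis alone, since the $k=1$ term of the series kernel is exactly $\mu(t-s)^{r-1}$ and all remaining terms are nonnegative, so enlarging the kernel preserves the inequality; your argument therefore proves the stronger statement that the authors surely intended (and which is what is actually used later, in the proof of Theorem \ref{thm:gaum}, where the Gronwall--Henry inequality is applied with the forcing term $\zeta(t)=Cm(K_E)^3t^{1/2}$ inside the integral). The only cosmetic items are the $\Phi$ versus $\phi$ notation mismatch in the paper's statement, which you correctly reconcile via $\Gamma(nr+1)=nr\,\Gamma(nr)$, and the observation that for $0<r<1$ the kernel is singular but locally integrable, so all convolutions in your induction are finite on compacts given $\psi\in L^\infty_{loc}$.
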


We state the following theorem

\begin{theorem}
\label{thm:gaus1}
Consider the reaction diffusion equation described via \eqref{eq:x1}-\eqref{eq:x3} where $\Omega$ is of spatial dimension $n=1, 2, 3$. There exists a universal constant $K_{\infty} > 0$, independent of the Allee parameter $m$ that bounds uniformly in $L^{\infty}$ the family of global attractors $\mathcal{A}_{m}$. That is,

\begin{equation}
\bigcup_{0\leq m \leq 1} \mathcal{A}_{m} \subset B_{L^{\infty}}(0,K_{\infty}).
\end{equation}

Where $B_{L^{\infty}}(0,K_{\infty})$ is the closed ball of radius $K_{\infty}$ in the space $L^{\infty}(\Omega)$, with $K_{\infty}=3C$, where the $C$ is from \eqref{eq:liea}.
\end{theorem}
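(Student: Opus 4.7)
The plan is to combine the uniform-in-$m$ estimates of appendix \ref{app3} with the $H^2 \hookrightarrow L^\infty$ machinery of Section 2.4. Since $\mathcal{A}_m$ is invariant under the semigroup $\{S_m(t)\}_{t\ge 0}$, every point $(u,v,r)\in\mathcal{A}_m$ can be written as $S_m(t)(\tilde u,\tilde v,\tilde r)$ for $(\tilde u,\tilde v,\tilde r)\in\mathcal{A}_m$ and arbitrarily large $t$, so it suffices to produce an asymptotic $L^\infty$ bound on each component of the solution that does not see $m$.

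For the $u$ and $v$ components, observe that the estimates in Lemma \ref{lem:lemba} and the subsequent $H^2$ computations leading to \eqref{eq:h2ea} never involve the Allee parameter $m$ explicitly. The only potentially problematic interaction is the appearance of $r$ in the $v$ equation, but Remark \ref{rv} shows that the ratio-dependent term $\tfrac{vr}{v+r}$ can be estimated by $C\|v\|_2$, so the right hand side in the $v$ estimates never actually couples to $r$ (or $m$). Hence the constant $C$ in \eqref{eq:liea} that bounds $\|u\|_\infty$ and $\|v\|_\infty$ is automatically independent of $m$.

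For the $r$ component, we chain the bounds from appendix \ref{app3}. The estimate \eqref{eq:rl2n} gives an $m$-independent $L^2$ bound on $r$, and \eqref{eq:nrn} upgrades this to an $m$-independent $H^1$ bound by replacing $K_m$ of \eqref{eq:kn1m} by its majorant $K$ obtained at $m=1$. Feeding these uniform bounds into \eqref{eq:lieah2} and then applying the uniform Gronwall lemma with $\beta=\|\Delta r\|_2^2$ and the $m$-independent integral controls from \eqref{eq:nh1na} yields an $m$-independent $H^2$ bound on $r$ for all $t>\max(t_1+1,t^*+1)$. Elliptic regularity and the Sobolev embedding $H^2(\Omega)\hookrightarrow L^\infty(\Omega)$, valid in dimensions $n=1,2,3$, then produce an $m$-independent $L^\infty$ bound $\|r\|_\infty\le C$, with $C$ the same generic constant as in \eqref{eq:liea}.

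Assembling the three components, let $(u,v,r)\in\mathcal{A}_m$ for any $m\in[0,1]$ and pick $t>\max(t_1+1,t^*+1)$; by invariance there exist $(\tilde u,\tilde v,\tilde r)\in\mathcal{A}_m$ with $(u,v,r)=S_m(t)(\tilde u,\tilde v,\tilde r)$, and the $m$-uniform bounds above give $\|u\|_\infty+\|v\|_\infty+\|r\|_\infty\le 3C$, placing $(u,v,r)$ in $B_{L^\infty}(0,K_\infty)$ with $K_\infty=3C$. Since $m\in[0,1]$ was arbitrary, this proves the inclusion. The main obstacle is the careful $m$-tracking through the hierarchy of estimates for $r$, in particular ensuring that both the absorbing-time $t_1$ and the Gronwall constant $K$ in \eqref{eq:kn1m} can be taken uniformly over $m\in[0,1]$; the former is handled by the singularity-avoiding derivation in appendix \ref{app3}, and the latter by the monotonicity of $K_m$ in $m$ on $[0,1]$.
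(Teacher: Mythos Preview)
Your proof is correct and follows the same approach as the paper. The paper's own proof of this theorem is a single sentence citing \eqref{eq:liea}, and you have correctly unpacked why the constant $C$ there is $m$-independent: the $H^2$ bound on $r$ in \eqref{eq:lieah2} already feeds in the $m$-uniform estimates \eqref{eq:rl2n} and \eqref{eq:nrn} of appendix \ref{app3}, while the $u,v$ chain never sees $m$ by Remark \ref{rv}; your explicit invariance argument to pass from asymptotic bounds to bounds on $\mathcal{A}_m$ is the standard step the paper leaves implicit.
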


This follows from the estimates via \eqref{eq:liea}.

We next focus on the uniform dissipativity in $L^{2}(\Omega)$.
The uniform in parameter $m$ $L^2(\Omega)$ estimate via \eqref{eq:rl2n} enables us to state the following theorem

\begin{theorem}
\label{thm:gaus1l2}
Consider the reaction diffusion system described via \eqref{eq:x1}-\eqref{eq:x3} where $\Omega$ is of spatial dimension $n=1, 2, 3$. The family of semiflows 
$\{ \{  S_{m}(t) \}_{t \geq 0}\}_{m \geq 0}$ for this system on $H$, is uniformly dissipative at $m=0$. Specifically there exists a constant $K_{H} > 0$ such that the ball $ B_{H}(0,K_{H})$,  which is the closed ball of radius $K_{H}$ in the space $H$ is a common absorbing set for the semiflows $\{ \{  S_{m}(t) \}_{t \geq 0}\}$ for all $m \in [0,1]$. Here $K_{H}=1 + \frac{2|\Omega|^{\frac{1}{4}}\left(c+\frac{w_{4}}{D_{3}}\right) \left(\frac{w_{4}}{D_{3}}\right)^{\frac{1}{4}}(C+D_{3})}{Cw_{4}}+ 2C$, where the $2C$ comes from lemma \ref{lem:lemba}.
\end{theorem}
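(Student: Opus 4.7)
The plan is to assemble the three uniform $L^{2}(\Omega)$ bounds already in hand into a single product ball in $H$ that is absorbed by every semiflow $S_{m}(t)$ with $m\in[0,1]$, with an entry time depending only on the initial data and on the $m$-independent parameters of the system. First I would invoke Lemma \ref{lem:lemba} for the $u$ and $v$ components: since neither the $u$-equation \eqref{eq:x1} nor the $v$-equation \eqref{eq:x2} involves the Allee threshold $m$, the bounds $\|u\|_{2}^{2}\le C$ and $\|v\|_{2}^{2}\le C$ hold for all $t>t^{*}(\|u_{0}\|_{2},\|v_{0}\|_{2})$ with a constant $C$ that is automatically independent of $m$.

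Next I would handle the $r$-component using the uniform-in-$m$ estimate \eqref{eq:rl2n} that was set up in appendix \ref{app3}. The key point here is that the naive bound \eqref{eq:x11n2} is useless for this purpose because its right-hand side carries a singular $1/m$ factor; the cleaner estimate \eqref{eq:rl2n} was obtained by dropping the $cm\|r\|_{2}^{2}$ absorbing term, absorbing the cubic source term via H\"{o}lder and Young's inequality against the dissipative $\int w_{4}r^{4}/(v+D_{3})$, and finally invoking the $m$-free $L^{\infty}$ bound on $v$ from \eqref{eq:liea}. The resulting constant depends only on $c$, $w_{4}$, $D_{3}$, $|\Omega|$ and the universal $C$ of \eqref{eq:liea}, and the entry time $t_{0}=\max(t^{*}+1,\ln(\|r_{0}\|_{2}^{2})/(Cw_{4}/(C+D_{3})))$ inherits $m$-independence through $t^{*}$ and a purely data-dependent logarithm.

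Putting the pieces together, for $t>\max(t^{*}+1,t_{0})$ the triple $(u(t),v(t),r(t))$ lies in the product ball $B_{H}(0,K_{H})$ with
\begin{equation*}
K_{H}=1+\frac{2|\Omega|^{\frac{1}{4}}\left(c+\tfrac{w_{4}}{D_{3}}\right)\left(\tfrac{w_{4}}{D_{3}}\right)^{\frac{1}{4}}(C+D_{3})}{Cw_{4}}+2C,
\end{equation*}
exactly as claimed in the statement. The common entry time depends only on $(u_{0},v_{0},r_{0})$ and on the $m$-free parameters, so $B_{H}(0,K_{H})$ is a common absorbing set for the whole family $\{S_{m}(t)\}_{m\in[0,1]}$.

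The main obstacle is conceptual rather than computational and has already been dispatched in appendix \ref{app3}: one has to avoid using the absorbing rate $cm$ (which vanishes as $m\to 0^{+}$) and instead coerce absorption out of the quartic dissipative term $w_{4}r^{4}/(v+D_{3})$. The remaining work in the present theorem is essentially bookkeeping to verify that neither $K_{H}$ nor the entry time carries any hidden singular $m$-dependence; the only place this could sneak in would be the logarithmic time $\ln(\|r_{0}\|_{2}^{2})$ divided by the decay rate, but since the decay rate is the $m$-free quantity $Cw_{4}/(C+D_{3})$, this is harmless.
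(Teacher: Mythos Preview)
Your proposal is correct and follows essentially the same route as the paper: the theorem is stated there as an immediate consequence of the uniform-in-$m$ estimate \eqref{eq:rl2n} for $r$ together with the $m$-free bounds of Lemma~\ref{lem:lemba} for $u$ and $v$, which is precisely the assembly you carry out. You have simply made explicit the bookkeeping (the role of the quartic dissipation in place of the $cm$ absorption, and the $m$-independence of the entry time $t_{0}$) that the paper leaves implicit in the single sentence preceding the theorem.
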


Next, let us choose $(u_{0},v_{0},r_{0})$  $\in$ $\mathcal{U}$. We know $\mathcal{U} \subset B_{H}(0,K_{H})$, the common absorbing ball in $H$ for the semiflow $S_{m}(t)$, $m \in [0,1]$, from theorem \ref{thm:gaus1l2}.
We use the earlier $H^1(\Omega)$ and $L^2(\Omega)$ estimates on $u,v$ from lemma \ref{lem:lemba}, noticing that they do not depend on $m$, we also use \eqref{eq:rl2n} and \eqref{eq:nrn} to obtain

\begin{eqnarray}
&&||S_{m}(t)((u_{0},v_{0},r_{0}))||^{2}_{E}  \nonumber \\
&&= ||(\nabla u,\nabla v,\nabla r)||^2 + ||( u, v, r)||^2  \nonumber \\
&& \leq 4C + C_{K} e^{K C_{K}} +  1 + \frac{2|\Omega|^{\frac{1}{4}}\left(c+\frac{w_{4}}{D_{3}}\right) \left(\frac{w_{4}}{D_{3}}\right)^{\frac{1}{4}}(C+D_{3})}{Cw_{4}}.
\end{eqnarray}

This is true for any $(u_{0},v_{0},r_{0}) \in \mathcal{U}$ and $m \in [0,1]$. Now by invariance $S_{m}(t)\mathcal{A}_{m}=\mathcal{A}_{m}$, and so $\mathcal{A}_{m} \subset B_{E}(0,K_{E})$. Where 

\begin{equation}
\label{eq:Ke1}
K_{E} = 4C + C_{K} e^{K C_{K}} +  1 + \frac{2|\Omega|^{\frac{1}{4}}\left(c+\frac{w_{4}}{D_{3}}\right) \left(\frac{w_{4}}{D_{3}}\right)^{\frac{1}{4}}(C+D_{3})}{Cw_{4}}.
\end{equation}

This yields the following theorem 

\begin{theorem}
\label{thm:gaus1}
Consider the reaction diffusion equation described via \eqref{eq:x1}-\eqref{eq:x3} where $\Omega$ is of spatial dimension $n=1, 2, 3$. There exists a universal constant $K_{E} > 0$, independent of the Allee parameter $m$ that bounds uniformly in $E$ the family of global attractors $\mathcal{A}_{m}$. That is,

\begin{equation}
\bigcup_{0\leq m \leq 1} \mathcal{A}_{m} \subset B_{E}(0,K_{E}),
\end{equation}

where $ B_{E}(0,K_{E})$ is the closed ball of radius $K_{E}$ explicitly given in \eqref{eq:Ke1}.
\end{theorem}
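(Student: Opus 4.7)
The plan is to assemble the theorem directly from the uniform-in-$m$ a priori estimates established earlier in the paper, combined with the invariance of $\mathcal{A}_m$ under the semiflow. Concretely, since $\mathcal{A}_m$ is the global attractor in $H$, it is invariant: $S_m(t)\mathcal{A}_m = \mathcal{A}_m$ for all $t \geq 0$. Thus it suffices to produce a time $t_\star$ (independent of $m \in [0,1]$ and of the starting point within the common $H$-absorbing ball $\mathcal{U} = B_H(0,K_H)$ from Theorem \ref{thm:gaus1l2}) at which the $E$-norm of $S_m(t_\star)(u_0,v_0,r_0)$ is uniformly controlled.

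First, I would fix $(u_0,v_0,r_0) \in \mathcal{U}$ and invoke the $u,v$ estimates from Lemma \ref{lem:lemba}. The key point is to reread those estimates and verify that the constants $C$ appearing there depend only on $d_1,d_2,w_1,w_2,w_3,a_2,D_3$ and on $K_H$, but not on $m$; this is immediate because the $u$ and $v$ equations do not contain $m$ at all. This gives
\[
\|u\|_2^2 + \|v\|_2^2 + \|\nabla u\|_2^2 + \|\nabla v\|_2^2 \leq 4C, \qquad t > t^\star.
\]
Next, I would use the uniform-in-$m$ $L^2$ bound \eqref{eq:rl2n} and the uniform-in-$m$ $H^1$ bound \eqref{eq:nrn} for $r$, both derived in appendix \ref{app3} by majorising the $m$-dependent coefficients by their values at $m=1$ (using that $K_m$ from \eqref{eq:kn1m} is monotone in $m$) and by dropping the non-useful $cm\|r\|_2^2$ and $cm\|\nabla r\|_2^2$ dissipation terms before applying the uniform Gronwall lemma.

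Summing these contributions for $t > \max(t^\star+1, t_1+1)$, I obtain the uniform bound
\[
\|S_m(t)(u_0,v_0,r_0)\|_E^2 \leq 4C + C_K e^{K C_K} + 1 + \frac{2|\Omega|^{1/4}\bigl(c+\tfrac{w_4}{D_3}\bigr)\bigl(\tfrac{w_4}{D_3}\bigr)^{1/4}(C+D_3)}{C w_4} = K_E,
\]
with $K_E$ independent of $(u_0,v_0,r_0) \in \mathcal{U}$ and of $m \in [0,1]$. Finally I would conclude by invariance: since $\mathcal{A}_m \subset \mathcal{U}$ (because $\mathcal{U}$ is a common $H$-absorbing ball) and $\mathcal{A}_m = S_m(t)\mathcal{A}_m$ for every $t$, picking any $t$ past the absorption time gives $\mathcal{A}_m \subset B_E(0,K_E)$, and taking the union over $m \in [0,1]$ yields the stated inclusion.

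The only non-routine step is verifying that nothing in the chain of estimates reintroduces a singular dependence on $m$; this has been handled in appendix \ref{app3} by the choice to majorise $K_m$ at $m=1$ and to discard the $cm$-dissipation (which would have forced a $1/m$ factor on the right-hand side). Once that observation is in place, the remaining work is bookkeeping of constants, and the theorem follows immediately from invariance of the attractor inside the uniform $E$-absorbing ball.
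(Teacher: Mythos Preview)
Your proposal is correct and follows essentially the same approach as the paper: choose initial data in the common $H$-absorbing ball $\mathcal{U}$, combine the $m$-independent $H^1$ estimates for $u,v$ from Lemma \ref{lem:lemba} with the uniform-in-$m$ estimates \eqref{eq:rl2n} and \eqref{eq:nrn} for $r$, sum to obtain the bound $K_E$ in \eqref{eq:Ke1}, and then use invariance $S_m(t)\mathcal{A}_m=\mathcal{A}_m$ to trap each attractor in $B_E(0,K_E)$. Your added remark about why no singular $1/m$ dependence re-enters (majorising $K_m$ at $m=1$ and discarding the $cm$-dissipation) is precisely the content of appendix \ref{app3}.
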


We next show that the trajectories $S_{m}(t)U, t\geq 0$, are uniformly $E$-bounded, with respect to $m$. Since the estimate in \eqref{eq:Ke1} does not depend on $m$, we can take a supremum over $m \in [0,1]$ and still achieve the same bound. Thus we have the following result

\begin{theorem}
\label{thm:gaus12}
Consider the reaction diffusion equation described via \eqref{eq:x1}-\eqref{eq:x3}. There exists a universal constant $K_{E} > 0$, independent of the Allee parameter $m$ such that

\begin{equation}
\sup_{0\leq m \leq 1} \sup_{t \geq 0} S_{m}(t)\left(\bigcup_{0\leq m \leq 1} \mathcal{A}_{m} \right) \subset B_{E}(0,K_{E}),
\end{equation}

where, $ B_{E}(0,K_{E})$ is the closed ball of radius $K_{E}$ in the space $E$.
\end{theorem}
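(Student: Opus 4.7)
The plan is to view this result as an immediate consequence of Theorem~\ref{thm:gaus1} combined with the fact that every a priori estimate leading to the constant $K_E$ in~\eqref{eq:Ke1} has been established uniformly in $m\in[0,1]$. Since both the initial data (taken from the common $H$-absorbing ball) and the energy bounds survive the limit $m \to 0^+$, it suffices to track the orbit and observe that nothing depends singularly on the parameter.

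First I would fix arbitrary $m\in[0,1]$, $t\geq 0$, and a point $(u_0,v_0,r_0)\in\bigcup_{0\leq m'\leq 1}\mathcal{A}_{m'}$. By Theorem~\ref{thm:gaus1l2} this triple satisfies $\|(u_0,v_0,r_0)\|_H\leq K_H$, a bound independent of which attractor $\mathcal{A}_{m'}$ the point was drawn from. I would then propagate the datum forward by $S_m(t)$. For the $(u,v)$-components, Lemma~\ref{lem:lemba} provides $H^1(\Omega)$-bounds depending only on $K_H$ and on the parameters of~\eqref{eq:x1}--\eqref{eq:x2}, and in particular not on $m$. For the $r$-component, the estimates~\eqref{eq:rl2n} and~\eqref{eq:nrn} were derived in Appendix~\ref{app3} precisely to replace the singular bounds~\eqref{eq:x11n2} and~\eqref{eq:f1ns}; they deliver the required $L^2(\Omega)$ and $H^1(\Omega)$ control uniformly in $m\in[0,1]$. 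Assembling the three pieces reproduces exactly the constant $K_E$ of~\eqref{eq:Ke1} for every $t>T_{*}:=\max(t_1+1,t^{*}+1)$.

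For the transient window $t\in[0,T_{*}]$, I would use Theorem~\ref{thm:gaus1}, which already gives $\|(u_0,v_0,r_0)\|_E\leq K_E$. On this compact interval, the usual energy/Galerkin estimates for~\eqref{eq:x1}--\eqref{eq:x3}, whose nonlinearities depend continuously (in fact polynomially) on $m\in[0,1]$, yield an $E$-bound depending only on $K_E$, on $T_{*}$, and on the fixed parameters. Absorbing this finite transient constant into $K_E$ if needed, and keeping the same symbol, one obtains $\|S_m(t)(u_0,v_0,r_0)\|_E\leq K_E$ uniformly in $m\in[0,1]$, in $t\geq 0$, and in the chosen starting point. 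The claimed inclusion then follows at once by taking the supremum on both sides.

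The main obstacle is essentially bookkeeping: one has to verify that no step silently reintroduces a singular $m$-dependence. The long-time bound is clean because Appendix~\ref{app3} replaces the $1/m$-type factors, such as the one appearing in~\eqref{eq:kn1m}, by majorants evaluated at $m=1$, and the transient bound is clean because the reaction terms in~\eqref{eq:x3} are smooth in $m$ over the compact interval $[0,1]$. Once these two observations are in place, the conclusion is a matter of taking the obvious supremum over $m$ and $t$.
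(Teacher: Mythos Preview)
Your argument is correct and follows the same route as the paper: both rely on the fact that the $m$-independent a priori estimates from Lemma~\ref{lem:lemba}, \eqref{eq:rl2n}, and \eqref{eq:nrn} assemble into the bound $K_E$ of~\eqref{eq:Ke1} for any initial datum in $\mathcal{U}$ and any $m\in[0,1]$, so that one may simply take the supremum over $m$. The paper's own justification is terse (essentially the single sentence ``since the estimate in \eqref{eq:Ke1} does not depend on $m$, we can take a supremum over $m\in[0,1]$''), and in particular it does not explicitly address the transient window $[0,T_*]$ that you handle via Theorem~\ref{thm:gaus1} plus local-in-time energy bounds; in that respect your write-up is more complete than the paper's.
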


We next set $m=0$ in \eqref{eq:x3} and note that the corresponding estimates in lemma \ref{lem:lemba} hold for $u, v$, and do not depend on the parameter $m$. Thus we carry out the analysis as in \eqref{eq:nh1na} - \eqref{eq:Ke1} to still yield

\begin{align}
||\nabla r||^{2}_{2} \leq  C_{K} e^{K C_{K}} , \ \mbox{for} \ t > t_{1} + 1. 
\end{align}

Where $C_{K} e^{K C_{K}}$ is independent of the parameter $m$. Thus we can state the following result

\begin{theorem}
\label{thm:ga1m0}
Consider the reaction diffusion equation described via \eqref{eq:x1}-\eqref{eq:x3}, when $m=0$, and where $\Omega$ is of spatial dimension $n=1, 2, 3$. There exists a $(H,E)$ global attractor $\mathcal{A}$ for the system. This is compact and invariant in $H$, and it attracts all bounded subsets of $H$ in the $E$ metric.
\end{theorem}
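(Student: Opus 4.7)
\textbf{Proof proposal for Theorem \ref{thm:ga1m0}.}
The plan is to essentially replay the proof of Theorem \ref{thm:ga2}, checking at each step that every estimate used remains valid in the degenerate case $m=0$. First I would observe that the well-posedness arguments giving Theorem \ref{thm:csol} and Proposition \ref{ge1} carry over verbatim when $m=0$, since the reaction term for $r$ is still a polynomial (indeed, a simpler one), so there is a well-defined continuous semigroup $\{S_{0}(t)\}_{t \geq 0}:H\rightarrow H$ associated to \eqref{eq:x1}--\eqref{eq:x3} with $m=0$.

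Next I would assemble the requisite absorbing ball estimates. The $u,v$ estimates of Lemma \ref{lem:lemba} make no use of the parameter $m$ at all, so they hold unchanged. For the $r$-component I would use exactly the derivation of Appendix \ref{app3}: in the $L^{2}(\Omega)$ estimate, the term $cm\|r\|_{2}^{2}$ was \emph{dropped} rather than used, and the dissipation needed to close the estimate was provided by the $L^{4}$ term coming from $w_{4}r^{3}/(v+D_{3})$; this argument still produces the uniform bound \eqref{eq:rl2n} when $m=0$. The integral-in-time bound \eqref{eq:nh1na} for $\int_{t}^{t+1}\|\nabla r\|_{2}^{2}\,ds$ is identical. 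The $H^{1}(\Omega)$ differential inequality \eqref{eq:nr1} was controlled via the majorant $K$ defined by plugging in $m=1$, so it trivially majorises the $m=0$ case as well. Invoking the uniform Gronwall lemma as in Appendix \ref{app3} then yields
\begin{equation*}
\|\nabla r\|_{2}^{2}\leq C_{K}e^{KC_{K}}, \quad t>t_{1}+1,
\end{equation*}
where $C_{K}$ and $K$ are independent of $m$ (and in particular are finite at $m=0$), as recalled in the sentence immediately preceding the theorem. This gives a bounded absorbing set in $E$ for $S_{0}(t)$.

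Having obtained uniform $H^{1}(\Omega)$ bounds on $(u,v,r)$, I would then bootstrap to $H^{2}(\Omega)$ by repeating the argument of the subsection on uniform $H^{2}$ estimates, inequalities \eqref{eq:x1pq}--\eqref{eq:lieah2}. The reaction term in the $r$-equation at $m=0$ is $cr^{2}-\frac{w_{4}}{v+D_{3}}r^{3}$, which is strictly simpler than the one treated in \eqref{eq:x1pqr}--\eqref{eq:x1pqr4}; the same Young-inequality and embedding steps apply, and the uniform Gronwall lemma produces a bound on $\|\Delta r\|_{2}$ independent of time and initial data. Elliptic regularity then yields a bounded absorbing set in $X=H^{2}(\Omega)\times H^{2}(\Omega)\times H^{2}(\Omega)$.

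Finally, asymptotic compactness in $E$ follows from the compact Sobolev embedding $H^{2}(\Omega)\hookrightarrow H^{1}(\Omega)$ exactly as in the proof of Theorem \ref{thm:ga2}: any bounded sequence in $H$ is eventually mapped into the bounded absorbing set in $X$, from which one extracts a subsequence converging weakly in $X$ and hence strongly in $E$. The standard global attractor existence theorem (see e.g.\ \cite{T97,SY02}) then produces the $(H,E)$ global attractor $\mathcal{A}_{0}$, compact and invariant in $H$ and attracting bounded subsets of $H$ in the $E$ metric. The only point that requires care is the one already emphasised above, namely checking that the $m$-independence of the constants in Appendix \ref{app3} is genuine at the endpoint $m=0$; once that bookkeeping is done, no new analytical ideas are needed.
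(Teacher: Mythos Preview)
Your proposal is correct and follows essentially the same route as the paper: the paper simply remarks that the $u,v$ estimates of Lemma \ref{lem:lemba} are $m$-independent, re-runs the analysis of \eqref{eq:nh1na}--\eqref{eq:Ke1} at $m=0$ to recover $\|\nabla r\|_{2}^{2}\leq C_{K}e^{KC_{K}}$, and then declares that the proof ``follows essentially via theorem \ref{thm:ga2}.'' Your write-up is more explicit (in particular spelling out the $H^{2}$ bootstrap and the compact embedding step), but the underlying strategy is identical.
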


The proof of the above theorem follows essentially via theorem \ref{thm:ga2}.

Note, since $\left(\bigcup_{0\leq m \leq 1} \mathcal{A}_{m} \right)$ is a bounded set in $H$, we can state the following result ,

\begin{theorem}
\label{thm:gaus13}
Consider the reaction diffusion equation described via \eqref{eq:x1}-\eqref{eq:x3}, with $m=0$. The global attractor for this system $\mathcal{A}_{0}$, attracts the set $\left(\bigcup_{0\leq m \leq 1} \mathcal{A}_{m} \right) $, with respect to the $E$-norm. That is

\begin{equation}
\lim_{ t \rightarrow \infty} dist_{E} \left(S_{0}(t)\left(\bigcup_{0\leq m \leq 1} \mathcal{A}_{m}\right), \mathcal{A}_{0}  \right).
\end{equation}
\end{theorem}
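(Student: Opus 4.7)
\textbf{Proof proposal for Theorem \ref{thm:gaus13}.} The plan is to combine the uniform-in-$m$ $E$-boundedness of the family of attractors established in Theorem \ref{thm:gaus12} with the $(H,E)$-attraction property of $\mathcal{A}_0$ established in Theorem \ref{thm:ga1m0}. More precisely, the strategy is to exhibit $\bigcup_{0 \leq m \leq 1}\mathcal{A}_m$ as a bounded subset of $H$, and then to invoke the attraction property of $\mathcal{A}_0$ relative to bounded subsets of $H$, with attraction taking place in the stronger $E$ norm.

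First I would appeal to Theorem \ref{thm:gaus12} to conclude that
\begin{equation*}
\bigcup_{0 \leq m \leq 1}\mathcal{A}_m \subset B_E(0,K_E),
\end{equation*}
where $K_E$, explicitly given in \eqref{eq:Ke1}, is independent of $m$. Since $E = H^1(\Omega)\times H^1(\Omega)\times H^1(\Omega)$ embeds continuously into $H = L^2(\Omega)\times L^2(\Omega)\times L^2(\Omega)$, this inclusion immediately gives a bounded set $\mathcal{U}_\star := \bigcup_{0\leq m \leq 1}\mathcal{A}_m$ in $H$. The key point here, which is secured by the uniform-in-$m$ estimates of appendix \ref{app3}, is that the bound does \emph{not} degenerate as $m \to 0^+$, so that $\mathcal{U}_\star$ is a genuinely $m$-independent bounded subset of $H$.

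Next I would apply Theorem \ref{thm:ga1m0}: the attractor $\mathcal{A}_0$ is a $(H,E)$ global attractor for the $m=0$ system, and therefore attracts every bounded subset of $H$ in the $E$ metric. Applying this with the bounded set $\mathcal{U}_\star \subset H$ yields
\begin{equation*}
\lim_{t \to \infty} dist_E\!\left( S_0(t)\mathcal{U}_\star,\, \mathcal{A}_0 \right) = 0,
\end{equation*}
which is exactly the desired conclusion.

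Since this theorem is essentially an immediate corollary of Theorems \ref{thm:gaus12} and \ref{thm:ga1m0}, there is no substantive obstacle in the argument itself; all the hard work has been discharged in the preceding results. The one place where care is needed is verifying that the absorbing ball $B_E(0,K_E)$ used in Theorem \ref{thm:gaus12} is truly uniform in $m \in [0,1]$, so that $\mathcal{U}_\star$ is an $m$-independent bounded set on which Theorem \ref{thm:ga1m0} can be applied. This uniformity is precisely what the $m$-independent $L^2(\Omega)$ and $H^1(\Omega)$ estimates on $r$ (namely \eqref{eq:rl2n} and \eqref{eq:nrn}) were designed to deliver, together with the $m$-free estimates on $u,v$ provided by Lemma \ref{lem:lemba}.
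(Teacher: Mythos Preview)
Your proposal is correct and follows essentially the same approach as the paper: the paper simply notes that $\bigcup_{0\leq m\leq 1}\mathcal{A}_m$ is a bounded set in $H$ (which is the content of Theorem~\ref{thm:gaus1}/\ref{thm:gaus12} combined with the embedding $E\hookrightarrow H$) and then invokes the $(H,E)$-attraction property of $\mathcal{A}_0$ from Theorem~\ref{thm:ga1m0}. Your write-up is more explicit about the logic, but the argument is the same.
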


We now use the Gronwall-Henry lemma, to show uniform convergence in the parameter $m$.

\begin{theorem}
\label{thm:gaum}
Consider the reaction diffusion equation described via \eqref{eq:x1}-\eqref{eq:x3}, and associated semi-group $S_{m}(t)$. Also consider \eqref{eq:x1}-\eqref{eq:x3}, when $m=0$, that is with associated semi-group $S_{0}(t)$. For any given $t \geq 0$, it holds that

\begin{equation}
\sup_{g_{0} \in \mathcal{U}} ||S_{m}(t)g_{0} - S_{0}(t)g_{0}||_{E} \rightarrow 0, \mbox{as} \ m \rightarrow 0^{+} . 
\end{equation}

\end{theorem}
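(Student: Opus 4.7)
The plan is to run a perturbation argument on the difference of two trajectories that share initial data, and to close an energy inequality first in $L^2$ and then in $H^1$.

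Fix $g_0 = (u^0, v^0, r^0) \in \mathcal{U}$ and write $(u_m, v_m, r_m) = S_m(t) g_0$, $(u, v, r) = S_0(t) g_0$. Set $U = u_m - u$, $V = v_m - v$, $R = r_m - r$, so $(U, V, R)(0) = 0$. Only the $r$-equation depends explicitly on $m$, and I split its reaction as
\begin{equation*}
r(r - m)\Bigl(c - \tfrac{w_4 r}{v + D_3}\Bigr) = F(r, v) - m\,G(r, v), \qquad F(r, v) = r^2\Bigl(c - \tfrac{w_4 r}{v + D_3}\Bigr),\ G(r, v) = r\Bigl(c - \tfrac{w_4 r}{v + D_3}\Bigr).
\end{equation*}
By Theorem \ref{thm:gaus1} and the $L^\infty$-bound \eqref{eq:liea}, the six components $u_m, v_m, r_m, u, v, r$ lie in a common $L^\infty$-ball of radius independent of $m \in [0, 1]$. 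On that ball, $\tfrac{uv}{u+v}$, $\tfrac{vr}{v+r}$ and $F$ are globally Lipschitz in their arguments, and $\|G(r_m, v_m)\|_\infty \le C$ uniformly in $m$.

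First I would carry out the $L^2$ difference estimate. Testing the three difference equations against $U, V, R$, integrating by parts with Neumann boundary conditions, summing, and using Young's inequality on $\int m\,G(r_m, v_m) R\,dx$ to extract the inhomogeneous $m$-dependence, I obtain
\begin{equation*}
\tfrac{d}{dt}\bigl(\|U\|_2^2 + \|V\|_2^2 + \|R\|_2^2\bigr) \le C_1\bigl(\|U\|_2^2 + \|V\|_2^2 + \|R\|_2^2\bigr) + C_2\, m^2,
\end{equation*}
with $C_1, C_2$ depending on the uniform Lipschitz constants but not on $m$ or on $g_0 \in \mathcal{U}$. Gronwall with zero initial data then yields, for each fixed $t \ge 0$,
\begin{equation*}
\|U(t)\|_2^2 + \|V(t)\|_2^2 + \|R(t)\|_2^2 \le C(t)\,m^2, \qquad \text{uniformly in } g_0 \in \mathcal{U}.
\end{equation*}

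Second I would upgrade this to the $E$-norm. Testing the difference equations against $-\Delta U,-\Delta V,-\Delta R$ and integrating by parts, the Neumann condition $\nabla U \cdot \mathbf{n}|_{\partial\Omega} = 0$ kills the boundary term in $\int U_t(-\Delta U)\,dx = \tfrac{1}{2}\tfrac{d}{dt}\|\nabla U\|_2^2$, and likewise for $V, R$. Absorbing the $\|\Delta\cdot\|_2^2$ contributions produced by the Lipschitz bound on the reaction into the dissipative $d_i\|\Delta\cdot\|_2^2$ terms via Young's inequality with $\epsilon$, I obtain
\begin{equation*}
\tfrac{d}{dt}\bigl(\|\nabla U\|_2^2 + \|\nabla V\|_2^2 + \|\nabla R\|_2^2\bigr) \le C_3\bigl(\|U\|_2^2 + \|V\|_2^2 + \|R\|_2^2\bigr) + C_4\, m^2.
\end{equation*}
Substituting the $L^2$-bound of Step~1 and integrating in time with zero initial data delivers $\|\nabla U(t)\|_2^2 + \|\nabla V(t)\|_2^2 + \|\nabla R(t)\|_2^2 \le C'(t)\,m^2$. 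Combined with Step~1, this gives the required $E$-norm bound, and letting $m \to 0^+$ proves the theorem.

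The hard part is ensuring uniformity in $m$ of every constant that appears, so that the Gronwall factor does not swallow the smallness of $m$. This rests on Theorem \ref{thm:gaus1}, which provides an $m$-independent $L^\infty$ bound on every trajectory starting from $\mathcal{U}$; that bound in turn depends on the $m$-uniform $L^2$ and $H^1$ estimates of Appendix \ref{app3} and the $H^2$ bound \eqref{eq:h2ea}. Once those uniformities are in hand, the nonlinearities behave as globally Lipschitz perturbations on a fixed bounded set, and the forcing $-m\,G(r_m, v_m)$ contributes only an $O(m^2)$ term in each energy balance. An alternative, slightly shorter but coarser route is to combine the $L^2$ rate of Step~1 with the uniform-in-$m$ $H^2$-bound and the Gagliardo--Nirenberg interpolation $\|w\|_{H^1}^2 \le C\|w\|_{L^2}\|w\|_{H^2}$; this yields only $O(m^{1/2})$ in $E$ but already suffices for the qualitative convergence asserted by the theorem.
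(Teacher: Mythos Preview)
Your argument is correct and reaches the same conclusion, but by a route that is genuinely different from the paper's. The paper works with the mild-solution formula: writing $w(t)=S_m(t)g_0-S_0(t)g_0$ via Duhamel, it uses the analytic-semigroup smoothing estimate $\|e^{A(t-\sigma)}\|_{\mathcal{L}(H,E)}\le N(t-\sigma)^{-1/2}$, the Lipschitz continuity of $f_0:E\to H$ on the bounded set $B_E(0,K_E)$ (via $H^1\hookrightarrow L^6$), and then closes with the singular Gronwall--Henry inequality. You instead run direct energy estimates on the difference system---first testing against $(U,V,R)$ to get the $L^2$ bound, then against $(-\Delta U,-\Delta V,-\Delta R)$ to reach $H^1$---and close with the ordinary Gronwall lemma. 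Your approach is more elementary in that it avoids both the semigroup machinery and the singular Gronwall, and it gives the explicit rate $O(m)$ in $E$; the price is that you must invoke a uniform-in-$m$, uniform-in-$g_0$ $L^\infty$ bound on trajectories $S_m(s)g_0$ for $s\in[0,t]$, not just on the attractors themselves. That bound is indeed available (by the comparison arguments of Appendix~\ref{app0} starting from $g_0\in\mathcal{U}\subset B_{L^\infty}(0,K_\infty)$, or via the $m$-uniform $H^2$ estimate \eqref{eq:h2ea}), so the argument is sound, but it would be worth stating this explicitly rather than citing only Theorem~\ref{thm:gaus1}, which as written bounds the attractors rather than arbitrary forward trajectories. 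Your interpolation shortcut at the end is also valid and is a nice observation.
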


For any given initial data $g_{0} \in \mathcal{U}$, we note $S_{m}(t)g_{0}$ and $S_{0}(t)g_{0}$, are both classical solutions, hence mild solutions. Thus we denote

\begin{equation}
w(t) = S_{m}(t)g_{0} - S_{0}(t)g_{0},  t \geq 0, \ \mbox{with} \ w(0)=0.
\end{equation}

Using the form of mild solutions, we can write down the following,

\begin{equation}
w(t) = \int^{t}_{0}e^{A(t-\sigma)}[f_{0}(S_{m}(\sigma)g_{0}) - f_{0}(S_{0}(\sigma)g_{0})] d \sigma + m \int^{t}_{0}e^{A(t-\sigma)}h(S_{m}(\sigma)g_{0})  d \sigma, \ t \geq 0.
\end{equation}

Here $e^{A t}$ , $t \geq 0$ is the $C_{0}$ semi-group generated by $A : D(A) \rightarrow H$. Note

\begin{equation}
f_{m}(u,v,r)=
\left(
 \begin{array}{c}                                                                             
u-u^{2}-w_{1}\frac{uv}{u+v}\\                                                                               
  -a_{2}v+w_{2}\frac{uv}{u+v}-w_{3}\left(\frac{vr}{v+r}\right) \\
-\frac{w_4} {v+D_3}r^{3} + cr^{2} \\                                                                               
  \end{array} \ 
 \right),    
h(r) = \left(
 \begin{array}{c}    
0\\                                                                               
 0 \\
-mcr+\frac{mw_4} {v+D_3}r^{2} \\                                                                               
  \end{array}   
 \right).    
\end{equation}

The above is easily seen to be Lipschitz continuous in $E$, due to the Sobolev embedding of $H^{1}(\Omega) \hookrightarrow L^{6}(\Omega)$. 
Here $f_{0}$, which is gotten by setting $m=0$ in the above, is also Lipschitz continuous, thus there is a Lipschitz constant $L_{f_{0}}(K_{E}) > 0$, depending only on $K_{E}$, where $K_{E}$ is given in \eqref{eq:Ke1}, s.t

\begin{equation}
||f_{0}(g_{1}) - f_{0}(g_{2})|| \leq L_{f_{0}}(K_{E}) || g_{1} - g_{2} ||_{E} ,  
\end{equation}

for any $g_{1}, g_{2} \in B_{E}(0,K_{E})$. From theorems \ref{thm:gaus1}, \ref{thm:gaus12} we have

\begin{eqnarray}
&& ||w(t)||_{E}  \nonumber \\
&& \leq \int^{t}_{0}||e^{A(t-\sigma)}||_{\mathcal{L}(H,E)} L_{f_{0}}(K_{E}) || S_{m}(\sigma)g_{0} -S_{0}(\sigma)g_{0}||_{E} d \sigma,   \nonumber \\
&& + m \int^{t}_{0}||e^{A(t-\sigma)}||_{\mathcal{L}(H,E)}h(S_{m}(\sigma)g_{0})  d \sigma ,\nonumber \\
&& \leq Cm(K_{E})^{3} \int^{t}_{0} N(t-\sigma)^{-\frac{1}{2}} d \sigma + NL_{f_{0}}(K_{E}) \int^{t}_{0} (t-\sigma)^{-\frac{1}{2}} ||w(t)||_{E} d \sigma, \nonumber \\
&& \leq Cm(K_{E})^{3}t^{\frac{1}{2}} + NL_{f_{0}}(K_{E}) \int^{t}_{0} (t-\sigma)^{-\frac{1}{2}} ||w(t)||_{E} d \sigma.
\end{eqnarray}

This follows via standard decay estimates on the semi-group \cite{SY02}, theorems \ref{thm:gaus1}, \ref{thm:gaus12} and the Sobolev embedding of $H^{1}(\Omega) \hookrightarrow L^{6}(\Omega)$. 
We now apply the Gronwall-Henry inequality with $\Psi(t)= ||w(t)||_{E},  \zeta(t)=Cm(K_{E})^{3}t^{\frac{1}{2}}, \mu =  NL_{f_{0}}(K_{E}), r=\frac{1}{2}$ to obtain

\begin{equation}
||S_{m}(t)g_{0} - S_{0}(t)g_{0}||_{E} = ||w(t)||_{E}  \leq m \left(\Psi(t) + k\int^{t}_{0}\Phi(k(t-s))\Psi(s)ds \right), \ t \geq 0.
\end{equation}
 
Here 

\begin{equation}
\zeta(t) = C(K_{E})^{3}t^{\frac{1}{2}}, \ k=NL_{f_{0}}(K_{E})\left(\Gamma \left(\frac{1}{2}\right)\right)^{\frac{1}{2}} , \ \Phi(t)= \sum^{\infty}_{n=1}\frac{1}{\Gamma(\frac{n}{2})}t^{\frac{n}{2}-1}.
\end{equation}

Thus we get the uniform convergence as required, and

\begin{equation}
\sup_{g_{0} \in \mathcal{U}} ||S_{m}(t)g_{0} - S_{0}(t)g_{0}||_{E} = ||w(t)||_{E}\rightarrow 0, \mbox{as} \ m \rightarrow 0^{+} . 
\end{equation}
 where the convergence is uniform in the parameter $m$.

\end{document}